\DeclareFontFamily{U}{wncy}{}
\DeclareFontShape{U}{wncy}{m}{n}{%
   <5>wncyr5%
   <6>wncyr6%
   <7>wnyr7%
   <8>wncyr8%
   <9>wncyr9%
   <10>wncyr10%
   <11>wncyr10%
   <12>wncyr6%
   <14>wncyr7%
   <17>wncyr8%
   <20>wncyr10%
   <25>wncyr10}{}
\newtheorem{thm}{Theorem}[section]
\newtheorem{lem}[thm]{Lemma}
\newtheorem{cor}[thm]{Corollary}
\newtheorem{prop}[thm]{Proposition}
\newtheorem{rem}[thm]{Remark}
\title{Weyl-Mahonian Statistics for Weighted Flags of Type A-D}
\author{Roland Bacher\footnote{This work has been partially supported by the LabEx PERSYVAL-Lab (ANR--11-LABX-0025). The author is a member of the project-team GALOIS supported by this LabEx.}}
\begin{document}
\maketitle



{\sl Abstract\footnote{Keywords: Flag, weighted flag, multiflag, Weyl group, Bruhat decomposition, symmetric group, hyperoctahedral group, Mahonian statistic,
Major index, Euler polynomial.
Math. class:  Primary: 05A05, Secondary: 05A10, 05A15, 05E15, 20G15.}: 
We relate properties of 
weighted flags (or multiflags) of type A-D
to statistics of the corresponding Weyl groups.
For type A, we recover the Mahonian statistics on
symmetric groups. Finally, we sketch briefly an easy extension
incorporating statistics for so-called Euler-polynomials.
}
\vskip.5cm

\section{Introduction}

Combinatorial statistics (or statistics for short) are combinatorially defined
$\mathbb N$-valued functions on sets of combinatorial elements. This paper
deals with three interesting statistics on Weyl groups of the three 
infinite families A, B-C (giving rise to isomorphic Weyl groups) and D. 
Weyl groups of type A are finite symmetric groups.
A well-known and natural statistic on such groups is defined by 
the number of inversions of a permutation $\sigma$ acting on
the totally ordered set $\{1,\dots,d\}$. It gives the length of $\sigma$
with respect to the usual 
Coxeter generators consisting of the $d-1$ transpositions $(1,2),(2,3),\dots,(d-1,d)$ exchanging two consecutive integers. 
Another important statistic is given by summing indices of descents 
of a permutation. This statistic is called the Major statistic in honour
of Major Mac Mahon who proved equidistribution of the length and the Major
statistic for symmetric groups, see \cite{MM13}.
A third interesting statistic is given by Eulerian polynomials encoding
numbers of permutations with a given number of descents.

All these three statistics arise naturally when counting
weighted flags over finite fields. This observation
allows an extension to all Weyl groups of the infinite
families A,B-C,D (with a caveat for type D:
numbers of descents have to be modified slightly).
Techniques coming from the theory of linear groups, 
outlined only briefly, should allow
to treat the cases of the exceptional Weyl groups.

The main part of this paper deals with the construction of 
the joint statistic for inversion numbers and Major indices.
These statistics are encoded by so-called Weyl-Mahonian polynomials.
Descent numbers are a cheap bonus outlined in a last chapter.

There exists several generalizations of 
the above statistics, mainly to Weyl groups of type BC,
see for example the incomplete liste \cite{ABR01}, \cite{ABR06}, \cite{AR}, \cite{Ba}, \cite{BeBr}, \cite{Bia03}, 
\cite{BiaZe}, \cite{Br}, \cite{Ca54}, \cite{Ca75}, \cite{C2003}, \cite{C2008},
\cite{CG}, 
\cite{CF94}, \cite{CF95}, \cite{CF95b},
\cite{CM}, \cite{DW}, \cite{Eu}, \cite{F10}, \cite{FS},
\cite{Hy},  
\cite{MR}, \cite{Re93}, \cite{Rei93}, \cite{Rei93a}, \cite{Rei95}, \cite{St76}, \cite{SW}, \cite{St} of related works. 
Our paper is an addition to this list.

Weyl groups are a special case of Coxeter groups, studied for example 
in the monographs \cite{BB01}, \cite{Da}, \cite{DH}, \cite{H90}.
Standard books of enumerative combinatorics are \cite{MM}, \cite{St86}, \cite{St99}. Finally, \cite{Bri} and \cite{H72} are good introductions to 
flag-varieties.

\section{Main results for flags of type A}\label{sectmainresults}

Vector-spaces are always finite-dimensional in the sequel.

A \emph{(partial) flag} of a vector-space $V$ over a field $\mathbb F$
is a sequence of subspaces 
$$\{0\}=V_0\subset V_1\subset\dots\subset V_{k-1}\subset V_k\subset V$$
of strictly increasing dimensions $0=\dim(V_0)<\dim(V_1)<\dots <\dim(V_{k-1})
<\dim(V_k)\leq \dim(V)$. We omit henceforth
the trivial subspace $V_0=\{0\}$ and use the notation 
$V_1\subset\dots\subset V_k$ for a flag of $V$.

A \emph{weighted flag} is a flag $V_1\subset \dots\subset V_k$
together with a sequence $w_1,\dots,w_k$
of strictly positive integers attached to the subspaces
$V_1,\dots,V_k$. We call the sequence $w_1,\dots,w_k$
the \emph{weight-sequence} of the weighted flag
$F=(V_1\subset\dots \subset V_k;w_1,\dots,w_k)$.
The \emph{weight} of such a weighted flag $F$ is 
defined as $w(F)=\sum_{i=1}^kw_i\dim(V_i)$. It is the content of the
partition, called the \emph{weight-partition}, 
with $w_i$ parts of size $\dim(V_i)\geq i$ for $i=1,\dots,k$.

\begin{rem}\label{remmultiflag}
The following definition, equivalent to weighted flags, 
avoids weights and considers instead
finite sequences of weakly increasing subspaces: A
partial \emph{multiflag} is a weakly
increasing finite sequence $\{0\}\not=V_1\subset V_2\subset\dots
\subset V_k$ of a vector space (finite-dimensional, as always).
Partial multiflags are in bijection with weighted flags:
Repeat each part $V_i$ of a weighted flag $w_i$ times.
The weighted flag $F=(V_1\subset \dots\subset V_k;w_1,\dots,w_k)$
corresponds thus to the weighted multiflag 
\begin{align*}&W_1=\dots=W_{w_1}=V_1\subset W_{w_1+1}=\dots=W_{w_1+w_2}=V_2
\subset\\
&\subset  W_{w_1+w_2+1}=\dots=W_{w_1+w_2+w_3}=V_3\subset \dots\subset W_l=V_k\end{align*}
consisting of $l=\sum_{i=1}^k w_i$ weakly increasing subspaces (with 
allowed equal consecutive subspaces
$W_i=W_{i+1}$). We have of course
$w(F)=\sum_{i=1}^kw_i\dim(V_i)=\sum_{i=1}^l\dim(W_i)$.
\end{rem}

We denote by $\mathcal{F}(V)$ the set of all flags and
by $\mathcal{WF}(V)$ the set of all weighted flags of
a vector-space $V$. 

An \emph{inversion} of a  permutation $\sigma\in\mathcal S_d$ 
acting on $\{1,\dots,d\}$ is given by $1\leq i<i\leq d$ 
such that $\sigma(i)>\sigma(j)$.
We write $\mathrm{inv}(\sigma)=\vert \{i,j\vert 
i<j,\sigma(i)>\sigma(j)\}\vert$ for the number of inversions
of a permutation $\sigma$ in $\mathcal S_d$.
It is well-known that $\mathrm{inv}(\sigma)$ corresponds to the 
length $l(\sigma)$
of $\sigma$ in terms of the Coxeter generators $(1,2),\dots,(d-1,d)$,
see for example Proposition \ref{propsignedlength} which generalizes
this result to hyperoctahedral groups (signed permutation groups).

A \emph{descent} of a permutation $\sigma$ in $\mathcal S_d$ is a value 
$i<d$ such that $\sigma(i)>\sigma(i+1)$.
The \emph{Major index} 
\begin{align}\label{defmajind}
\mathrm{maj}(\sigma)&=\sum_{i,\sigma(i)>\sigma(i+1)}i
\end{align}
sums up the indices of all descents of a permutation $\sigma$ in $\mathcal S_d$.

The following result relates Mahonian statistics over $\mathcal S_d$ 
with statistics of weighted flags over finite fields:

\begin{thm}\label{mainthm} We have
\begin{align}\label{formofmainthm}
\sum_{F\in \mathcal{WF}(\mathbb F_q^d)}t^{w(F)}&=M_d
\prod_{j=1}^d\frac{1}{1-t^j}
\end{align}
with 
$$M_d=\sum_{\sigma\in \mathcal S_d}
q^{\mathrm{inv}(\sigma)}t^{\mathrm{maj}(\sigma)}$$
denoting the Mahonian statistics of $\mathcal S_d$ encoding
multiplicities of elements of given length and Major index.
\end{thm}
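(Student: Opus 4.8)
The plan is to compute the left-hand side directly and reorganize it into the shape of the right-hand side, with the Bruhat decomposition of partial flag varieties providing the bridge to the Weyl group. First I would perform the sum over weight-sequences. Grouping weighted flags by the dimension vector of the underlying flag, write $D=\{d_1<\dots<d_k\}\subseteq\{1,\dots,d\}$ for the set of occurring dimensions. For a fixed flag of type $D$ the weights $w_1,\dots,w_k$ range independently over the positive integers, so the weight-sum factors as $\prod_{i\in D}\frac{t^i}{1-t^i}$; this is exactly the weight-partition reformulation of Remark \ref{remmultiflag} (a partition whose set of distinct parts is $D$). Since the number of flags of type $D$ over $\mathbb F_q$ is the Gaussian multinomial coefficient $\binom{d}{D}_q$ attached to the composition $(d_1,d_2-d_1,\dots,d-d_k)$ of $d$, this yields
\begin{align*}
\sum_{F\in\mathcal{WF}(\mathbb F_q^d)}t^{w(F)}=\sum_{D\subseteq\{1,\dots,d\}}\binom{d}{D}_q\prod_{i\in D}\frac{t^i}{1-t^i}.
\end{align*}

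The key step, and the place where the Weyl group enters, is to replace each Gaussian coefficient by a sum over permutations. I would invoke the parabolic factorization of the Poincaré polynomial of $\mathcal S_d$: setting $\bar D=D\cap\{1,\dots,d-1\}$ and letting $W_{\bar D}$ be the Young subgroup generated by the simple transpositions $s_i=(i,i+1)$ with $i\in\{1,\dots,d-1\}\setminus\bar D$, the minimal-length representatives of $\mathcal S_d/W_{\bar D}$ are precisely the permutations with $\mathrm{Des}(\sigma)\subseteq\bar D$, whence $\binom{d}{D}_q=\sum_{\sigma:\ \mathrm{Des}(\sigma)\subseteq\bar D}q^{\mathrm{inv}(\sigma)}$. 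This is the main obstacle: it is the $q$-counting form of the Bruhat cell decomposition of the corresponding partial flag variety, and it is what translates flag combinatorics over $\mathbb F_q$ into length statistics on $\mathcal S_d$ (a direct argument via the inversion statistic on shuffles is also available, but the coset picture is the conceptual heart). Note that $\binom{d}{D}_q$ and $\bar D$ are unchanged by whether or not $d\in D$, so separating these two cases contributes a factor $1+\frac{t^d}{1-t^d}=\frac{1}{1-t^d}$, isolating the top dimension and leaving an inner sum over $\bar D\subseteq\{1,\dots,d-1\}$.

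Finally I would exchange the order of summation, summing first over $\sigma\in\mathcal S_d$ and then over all $\bar D\subseteq\{1,\dots,d-1\}$ with $\bar D\supseteq\mathrm{Des}(\sigma)$. For fixed $\sigma$ with $S=\mathrm{Des}(\sigma)$ the inner sum factors over the independent choices $i\in\bar D$ or $i\notin\bar D$ for each $i\notin S$:
\begin{align*}
\sum_{\bar D\supseteq S}\ \prod_{i\in\bar D}\frac{t^i}{1-t^i}=\Bigl(\prod_{i\in S}\frac{t^i}{1-t^i}\Bigr)\prod_{\substack{1\le i\le d-1\\ i\notin S}}\frac{1}{1-t^i}=\frac{t^{\mathrm{maj}(\sigma)}}{\prod_{i=1}^{d-1}(1-t^i)},
\end{align*}
where I used $\sum_{i\in\mathrm{Des}(\sigma)}i=\mathrm{maj}(\sigma)$ from \eqref{defmajind}. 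Multiplying by the isolated factor $\frac{1}{1-t^d}$ and summing $q^{\mathrm{inv}(\sigma)}t^{\mathrm{maj}(\sigma)}$ over $\sigma$ gives exactly $M_d\prod_{j=1}^d\frac{1}{1-t^j}$, as claimed. I expect the only delicate bookkeeping to be the indexing conventions linking $D$, the parabolic $W_{\bar D}$, and the descent set, together with the harmless role of the top dimension $d$; a quick check in the case $d=2$, where $M_2=1+qt$, confirms that the constants come out right.
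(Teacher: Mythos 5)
Your proposal is correct, but it follows a genuinely different route from the paper. The paper proves Theorem \ref{mainthm} by decomposing the set of weighted flags itself: it attaches to every flag a canonical basis with a length-permutation $\lambda\in\mathcal S_d$, shows that the canonical bases with given $\lambda$ form an affine cell of dimension $\mathrm{inv}(\lambda)$ (Proposition \ref{propcanbaseslambda}), that the associated standard flag has standard weight $\mathrm{maj}(\lambda)$ (Proposition \ref{propstandweight}), and that the fiber of $F\longmapsto st(F)$ over a fixed standard flag contributes $t^{w_{st}(F_{st})}\prod_{i=1}^d(1-t^i)^{-1}$ (Proposition \ref{propweightedflagstandardflag} and Corollary \ref{corpartitionstandard}); summing over cells via Corollary \ref{corstflags} yields the theorem. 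You never decompose individual flags: you sort by the dimension set $D$, count flags of type $D$ by the Gaussian multinomial, and import the parabolic factorization $\binom{d}{D}_q=\sum_{\sigma:\,\mathrm{Des}(\sigma)\subseteq\bar D}q^{\mathrm{inv}(\sigma)}$, after which your resummation over $\bar D\supseteq\mathrm{Des}(\sigma)$ (together with the clean observation that adjoining $d$ to $D$ leaves the multinomial unchanged and contributes the factor $(1-t^d)^{-1}$) produces $t^{\mathrm{maj}(\sigma)}$ and the partition factor in one stroke; your $d=2$ check and all intermediate identities are sound. Your route is shorter and purely enumerative for type A, at the cost of quoting the descent-class identity, which is itself the combinatorial shadow of the Bruhat decomposition that the paper makes explicit. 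The paper's heavier cell-by-cell machinery pays off later: canonical half-bases, Rothe diagrams and standard flags transfer to types B, C and D (Theorems \ref{thmsympl} and \ref{thmwgtflagsD}), where no ready-made analogue of your multinomial identity for $l^\pm$ and $\mathop{Wmaj}$ is available off the shelf; conversely, your superset-of-descent-set resummation would also deliver those cases once the relevant cell counts and standard weights are established, which is precisely the content of Propositions \ref{propdimhalfbase} and \ref{propsignedstandardweight} there.
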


Observe that the sum over elements in $\mathcal{WF}(\mathbb F_q^d)$ is the
sum over weighted flags in an abstract $d$-dimensional 
vector-space over $\mathbb F_q$. It does not depend on the choice 
of a basis for $\mathbb F_q^d$. Observe also that Theorem \ref{mainthm}
involves a notational abuse: the integer $q$ representing the number of
elements of a finite field
$\mathbb F_q$ (which is unique, up to isomorphism) 
should be considered as a formal variable. Equalities are 
among formal power series in $q$ and $t$. Easy majorations
show however that we get converging series in small neighbourhoods
of $(0,0)\in \mathbb C^2$.

The factor $\prod_{j=1}^d(1-t^j)^{-1}$ is the 
generating series for partitions involving at most $d$ parts
(or, equivalently, for partitions involving only parts of length at most
$d$).

An obvious modification of Theorem \ref{mainthm} and its generalizations 
holds for arbitrary fields by considering powers of $q$ as dimensions
of a cellular decomposition (corresponding to the Bruhat decomposition of 
simple algebraic groups of Lie type) 
of the set of all weighted flags
into cells carrying affine structures over a field $\mathbb F$.
We stick to the elementary enumerative approach over finite fields
for simplicity.

We denote by ${d\choose k}_q$ the $q$-binomial coefficient defined 
(for example) recursively by
${d\choose 0}_q={d\choose d}_q=1$
and ${d\choose i}_q={d-1\choose i-1}_q+q^i{d-1\choose i}_q$.

Exploiting the geometric structure on the left hand side of 
(\ref{formofmainthm}) we get:

\begin{cor}\label{correc} The polynomials
$M_d=\sum_{\sigma\in \mathcal S_d}
q^{\mathrm{inv}(\sigma)}t^{\mathrm{maj}(\sigma)}$ are recursively defined by 
$M_0=1$ and by 
\begin{align}\label{formularecMd}
M_d&=\sum_{i=0}^{d-1}t^i\left(\prod_{j=i+1}^{d-1}1-t^j\right)
{d\choose i}_qM_i
\end{align}
(using the convention $\prod_{j=d}^{d-1}(1-t^j)=1$).
\end{cor}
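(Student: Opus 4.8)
The plan is to prove an equivalent but cleaner recursion for the left-hand side of (\ref{formofmainthm}) and then to transport it through Theorem~\ref{mainthm}. Write $P_d=\sum_{F\in\mathcal{WF}(\mathbb{F}_q^d)}t^{w(F)}$, so that Theorem~\ref{mainthm} reads $M_d=P_d\prod_{j=1}^d(1-t^j)$. Substituting $M_i=P_i\prod_{j=1}^i(1-t^j)$ into (\ref{formularecMd}) and noting that the ranges of the two products merge into $\left(\prod_{j=i+1}^{d-1}(1-t^j)\right)\prod_{j=1}^{i}(1-t^j)=\prod_{j=1}^{d-1}(1-t^j)$, the common factor $\prod_{j=1}^{d-1}(1-t^j)$ cancels on both sides. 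Thus (\ref{formularecMd}) is equivalent to the recursion
\[
(1-t^d)\,P_d=\sum_{i=0}^{d-1}t^i\binom{d}{i}_q P_i ,
\]
and it suffices to establish this geometrically.

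First I would record a weight-shifting bijection. Given any weighted flag $F$ of $\mathbb{F}_q^d$ with top subspace $V_k$, adjoin the full space $V=\mathbb{F}_q^d$ at the top: if $V_k=V$ already, increase its weight by one, and otherwise append a new top part $V$ of weight one. In either case the weight grows by exactly $\dim(V)=d$, and the map is a bijection onto the set of weighted flags in which $V$ occurs. Hence the generating function of weighted flags containing $V$ equals $t^dP_d$, and since $P_d=t^dP_d+B_d$ we obtain
\[
(1-t^d)\,P_d=\sum_{\substack{F\in\mathcal{WF}(\mathbb{F}_q^d)\\ V_k\subsetneq V}}t^{w(F)}=:B_d ,
\]
the generating function of weighted flags whose top subspace is a proper subspace (the empty flag included).

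Next I would compute $B_d$ by conditioning on the top subspace $U=V_k$, of dimension $i$ with $0\le i\le d-1$. The number of such $U$ is the $q$-binomial coefficient $\binom{d}{i}_q$, and once $U$ is fixed the flag lies entirely inside $U\cong\mathbb{F}_q^i$ and has $U$ as its full top subspace; dimensions, hence weights, are intrinsic and unaffected by the ambient space. Applying the weight-shifting bijection above in dimension $i$ identifies the weighted flags of $\mathbb{F}_q^i$ whose top is the full space with \emph{all} weighted flags of $\mathbb{F}_q^i$, shifted by $t^i$; their generating function is therefore $t^iP_i$ (the case $i=0$ giving the empty flag and the term $P_0=1$). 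Summing over $i$ yields $B_d=\sum_{i=0}^{d-1}t^i\binom{d}{i}_q P_i$, which combined with the previous display is exactly the desired recursion.

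The only delicate points are bookkeeping ones: handling the empty flag uniformly as the $i=0$ contribution, and checking that the cancellation in the first step is a legitimate identity of formal power series (division by $\prod_{j=1}^{d-1}(1-t^j)$, whose constant term is $1$, is permitted). The geometric heart of the argument — the weight-shifting bijection and the fact that an $i$-dimensional ambient subspace contributes precisely $\binom{d}{i}_q$ copies of the dimension-$i$ problem — is where the content really lies.
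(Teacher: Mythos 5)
Your proof is correct and takes essentially the same route as the paper's: both sort weighted flags by the dimension $i$ of the top subspace, count the $\binom{d}{i}_q$ choices of that subspace, use the weight shift by $t^i$ to reduce to the generating function $P_i$, and transport the resulting identity $(1-t^d)P_d=\sum_{i=0}^{d-1}t^i\binom{d}{i}_q P_i$ through Theorem~\ref{mainthm}. Your explicit weight-shifting bijection (incrementing or appending the top space) merely spells out what the paper's one-line sorting argument leaves implicit.
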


Evaluating (\ref{formularecMd}) at $t=1$ yields the
recursion $L_d={d\choose d-1}_qL_{d-1}=(1+q+q^2+\cdots+q^{d-1})L_{d-1}$ 
and implies the well-known factorization
\begin{align}\label{formulalengthfactorSd}
L_d&=
\prod_{j=1}^d\frac{1-q^j}{1-q}
\end{align}
for the generating polynomial $L_d=\sum_{\sigma\in\mathcal S_d}q^{l(\sigma)}$ 
enumerating elements of $\mathcal S_d$
accordingly to their length with respect to the Coxeter generators
$(i,i+1)$.

Evaluating $M_d$, given by Formula (\ref{formularecMd}) of Corollary
\ref{correc} at $q=1$ shows equidistribution of the length statistic with
the Major statistic:

\begin{prop}\label{propevqeqoneofMd}
The evaluation at $q=1$ of $M_d$ factorizes as
$$\prod_{j=1}^d\frac{1-t^j}{1-t}\ .$$
\end{prop}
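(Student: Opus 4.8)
The plan is to substitute $q=1$ directly into the recursion (\ref{formularecMd}) of Corollary \ref{correc} and prove the closed form by induction on $d$. Write $N_d=M_d\vert_{q=1}$; since ${d\choose i}_q\vert_{q=1}={d\choose i}$ is the ordinary binomial coefficient, the recursion specializes to
\begin{align*}
N_d&=\sum_{i=0}^{d-1}t^i{d\choose i}\Big(\prod_{j=i+1}^{d-1}(1-t^j)\Big)N_i,\qquad N_0=1 .
\end{align*}
The goal is to show $N_d=\prod_{j=1}^d\frac{1-t^j}{1-t}$, the $t$-factorial $[d]_t!$. The base case $N_0=1$ is immediate, and $N_1=1$, $N_2=1+t$ can be checked directly as sanity tests.

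For the inductive step I would insert the hypothesis $N_i=\prod_{j=1}^i\frac{1-t^j}{1-t}=\frac{1}{(1-t)^i}\prod_{j=1}^i(1-t^j)$ for all $i<d$. The key simplification is the telescoping identity $\big(\prod_{j=i+1}^{d-1}(1-t^j)\big)\big(\prod_{j=1}^i(1-t^j)\big)=\prod_{j=1}^{d-1}(1-t^j)$, valid for $0\le i\le d-1$ (the convention $\prod_{j=d}^{d-1}(1-t^j)=1$ covering the boundary case $i=d-1$). This cancels the factor $\prod_{j=1}^i(1-t^j)$ coming from $N_i$ and pulls the common factor $\prod_{j=1}^{d-1}(1-t^j)$ out of the sum, leaving
\begin{align*}
N_d&=\Big(\prod_{j=1}^{d-1}(1-t^j)\Big)\sum_{i=0}^{d-1}{d\choose i}\Big(\frac{t}{1-t}\Big)^i .
\end{align*}

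At this point the whole statement reduces to a single binomial identity. Setting $x=t/(1-t)$, so that $1+x=1/(1-t)$, the binomial theorem gives $\sum_{i=0}^d{d\choose i}x^i=(1+x)^d=(1-t)^{-d}$; subtracting the top term $x^d=t^d(1-t)^{-d}$ yields $\sum_{i=0}^{d-1}{d\choose i}x^i=\frac{1-t^d}{(1-t)^d}$. Substituting back gives $N_d=\big(\prod_{j=1}^{d-1}(1-t^j)\big)\frac{1-t^d}{(1-t)^d}=\prod_{j=1}^d\frac{1-t^j}{1-t}$, completing the induction.

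There is no serious obstacle here: once $q$ is specialized, everything is a routine manipulation of finite products together with a one-line application of the binomial theorem. The only point requiring a little care is the telescoping of the $(1-t^j)$-products and the empty-product conventions at the endpoints, which I would state explicitly to make the cancellation transparent; after that, the reduction to $\sum_{i=0}^{d-1}{d\choose i}x^i$ and hence the factorization are forced.
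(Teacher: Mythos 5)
Your proof is correct and follows essentially the same route as the paper's: substitute $q=1$ into the recursion of Corollary \ref{correc}, use induction with the telescoping cancellation of the $(1-t^j)$-products, and evaluate the remaining sum $\sum_{i=0}^{d-1}{d\choose i}\bigl(\tfrac{t}{1-t}\bigr)^i$ as $\bigl(1+\tfrac{t}{1-t}\bigr)^d-\tfrac{t^d}{(1-t)^d}$ by the binomial theorem. Your write-up is in fact slightly more careful than the paper's about the empty-product conventions at the endpoints, but the argument is the same.
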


The well-known symmetry $M_d(q,t)=M_d(t,q)$ in $q,t$ of the polynomials $M_d$ 
is however not obvious from Formula (\ref{formularecMd}) 
in Corollary \ref{correc}.

The sequel of the paper is organized as follows:

In Sections \ref{sectflagsBC} and \ref{sectflagD} we describe analogues of
Theorem \ref{mainthm}
and Corollary \ref{correc} for Weyl groups of type B,C and D
(symmetric groups are of course Weyl groups of type A).

Section \ref{sectalgapproachforexctypes} sketches briefly an approach
for dealing with exceptional Weyl groups.

Sections \ref{sectdefflags}-\ref{sectproofD}
are devoted to complements and proofs.

Section \ref{sectalgapproachforexctypes}
contains a brief and sketchy description perhaps useful when dealing
with exceptional Weil groups (of type E,F,G).

Finally, Section \ref{sectEulerpol}
describes generalizations taking
also into account the number $\sum_{i=1}^k w_i$ of a weighted flag
$(V_1\subset\dots\subset V_k;w_1,\dots,w_k)$, the dimension $\dim(V_k)$
of the last subspace in a flag and statistics related to 
signs in the case of Weyl groups of type BC and D. Statistics of these 
numbers in the case of type A or BC involve  Euler polynomials counting
descents. Details are easy to fill in and are mostly omitted.


\section{Main results for flags of type B and C}\label{sectflagsBC}

We consider a finite-dimensional vector space $V$ over a field
of characteristic $\not= 2$ (this avoids technical problems in the 
symmetric case)
endowed with a non-degenerate symmetric or antisymmetric bilinear form $b$.
A subspace $L$ of $V$ is \emph{isotropic} if the restriction of $b$ to 
$L\times L$ is zero. We suppose that $V$ has isotropic subspaces
of the maximal dimension $\lfloor \dim(V)/2\rfloor$ 
compatible with non-degeneracy of $b$. Such a space is called a \emph{symplectic
space} if $b$ is antisymmetric. Symplectic spaces will always
be denoted by $(V,\omega)$. \emph{Lagrangians} are maximal 
isotropic subspaces of symplectic spaces. 

A space $(V,b)$ is of type C
if $V$ is of even dimension and $b$ is a non-degenerate symplectic form, 
of type B if $(V,b)$ is a quadratic space of odd dimension
and of type D if $(V,b)$ is a quadratic space of even dimension.
Type B and C share Weyl groups: The common Weyl group of 
$(\mathbb F^{2d},\omega)$ and of  $(\mathbb F^{2d+1},b)$ (with $b$ a suitable symmetric bilinear form) is given by the hyperoctahedral 
group $\mathcal S_d^\pm$ of all permutations $\sigma$
of $\{\pm 1,\dots,\pm d\}$ such that $\sigma(-i)=-\sigma(i)$ for $i=1,\dots,d$.
The case of $(\mathbb F^{2d},b)$ (with $b$ a suitable symmetric bilinear form
on a vector-space of even dimension $2d$)
corresponds to the subgroup $\mathcal S_d^D$ of index two in $\mathcal S_d^\pm$
consisting of all elements $\sigma$ in $\mathcal S_d^\pm $ such that 
$\prod_{i=1}^d\sigma(i)=d!$.

A flag of $(V,b)$ is a flag $V_1\subset \dots\subset V_k$ consisting 
only of isotropic subspaces. Weighted flags of $(V,b)$ and weights of flags
are defined in the obvious way. It is of course again possible to 
replace weighted flags by multiflags involving only isotropic subspaces
(ending with an even isotropic subspace in the case of type D),
see Remark \ref{remmultiflag}. 
We denote by $\mathcal{WF}(V,b)$ the set of all 
weighted flags of $(V,b)$.

In order to state analogues of Theorem \ref{mainthm}
and Corollary \ref{correc} we need to introduce the relevant 
statistics on the corresponding Weyl groups. We call 
these statistics \emph{Weyl-Mahonian} since they extend Mahonian
statistic on symmetric groups to other Weyl groups.

We start by introducing a somewhat exotic order-relation,
denoted by $<_\pm $, on the set $\mathbb R$ of real numbers.
It is defined by
$x<_\pm x+\epsilon<_\pm 0<_\pm -x-\epsilon<_\pm -x$ for strictly positive
$x$ and $\epsilon$. 
The induced order relation on $\mathbb Z$ is given 
by
\begin{align}\label{pmorderonZ}
1<_\pm 2<_\pm 3<\pm \dots<_\pm 0<_\pm \dots <_\pm -3<_\pm -2<_\pm -1
\end{align}
or equivalently by
$$(\mathbb N^*,<)<_\pm 0<_\pm (-\mathbb N^*,<)$$
with $(\mathbb N^*,<)$ (respectively $(-\mathbb N^*,<)$ denoting
the ordered set $\mathbb N^*=\mathbb N\setminus\{0\}$ (respectively
$-\mathbb N^*\setminus\{0\}$) endowed with the usual order. 
In particular, $(\mathbb Z,<_\pm)$ is totally ordered with smallest element $1$
and largest element $-1$. Observe however that $(\mathbb Z,<_\pm)$ is 
not well-ordered: $\{-1,-2,-3,\dots\}$ has no smallest element.

We recall that $\mathcal S^{\pm}_d$ denotes the hyperoctahedral group 
of all $2^d\cdot d!$ signed permutations of 
$\{\pm 1,\dots,\pm d\}$. We consider it as the Weyl group of type C 
generated by the transpositions $(1,2),(2,3),\dots,(d-1,d)$ 
(where $(i,i+1)$ denotes the signed permutation $j\longmapsto j$ if $j\not\in\{\pm i,\pm(i+1)\}$ and exchanging the pair $\pm i$ with the pair $\pm (i+1)$
by preserving signs) and by the sign change $(d,-d)$ transposing 
the elements of the largest pair $\{d,-d\}$ of opposite integers.

The length-function of an element in $\mathcal S_d^\pm$
with respect to these $d$ generators is given by the following 
(surely well-known) result:

\begin{prop}\label{propsignedlength} The length $l^\pm(\sigma)$ of an element $\sigma\in 
\mathcal S_d^\pm$ with respect to the generators
$(1,2),(2,3),\dots,(d-1,d),(d,-d)$ is given by the formula
\begin{align}\label{defsignedlength}
l^\pm(\sigma)&=\sum_{0<i<j,\sigma(i)>_\pm \sigma(j)}1+
\sum_{0<i,\sigma(i)<0}(d+1+\sigma(i))\ .
\end{align}
\end{prop}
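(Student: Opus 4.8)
The plan is to show that the function $\ell(\sigma)$ defined by the right-hand side of (\ref{defsignedlength}) coincides with the Coxeter length $l^\pm(\sigma)$ by the usual two-step comparison: first that $\ell$ changes by exactly $\pm 1$ under right multiplication by each of the $d$ generators, and then that these facts pin $\ell$ down as the length. Writing the second summand as $\sum_{0<i,\sigma(i)<0}(d+1-|\sigma(i)|)$, I note that it depends only on the multiset of window values $\{\sigma(1),\dots,\sigma(d)\}$, not on their positions. For the generator $(i,i+1)$ with $i<d$, right multiplication merely swaps the window entries in positions $i$ and $i+1$; the second summand is therefore unchanged, while the classical argument for adjacent swaps --- valid for any total order, here $<_\pm$ --- shows that only the pair $(i,i+1)$ changes its inversion status. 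Hence $\ell(\sigma\,(i,i+1))=\ell(\sigma)\pm 1$, with a decrease precisely when $\sigma(i)>_\pm\sigma(i+1)$.

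The delicate case, and the main obstacle, is the sign change $(d,-d)$, whose right multiplication replaces $\sigma(d)$ by $-\sigma(d)$, so that both summands move. If $\sigma(d)=k>0$ the second summand increases by $d+1-k$, and I must show the first summand drops by exactly $d-k$, for a net change of $+1$ (the signs reverse when $\sigma(d)<0$). For this I would record the ranks of the $2d$ relevant values in the order $1<_\pm\dots<_\pm d<_\pm -d<_\pm\dots<_\pm -1$: the value $m$ has rank $m$ and the value $-m$ has rank $2d+1-m$. Negating the last entry affects only the inversions $(i,d)$, and the pairs whose status flips are exactly those $i<d$ with $\sigma(i)$ lying strictly between $k$ and $-k$ in $<_\pm$; since for each magnitude $m\in\{k+1,\dots,d\}$ precisely one of $\pm m$ occupies such a position and no smaller magnitude does, there are exactly $d-k$ of them. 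This rank bookkeeping is the crux of the argument, and the descent criterion that falls out is that $(d,-d)$ decreases $\ell$ precisely when $\sigma(d)<0$.

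With the $\pm 1$ behaviour established, I finish by the standard inequalities. Because every generator moves $\ell$ by exactly one and $\ell(e)=0$, any word of length $L$ representing $\sigma$ yields $\ell(\sigma)\le L$, whence $\ell(\sigma)\le l^\pm(\sigma)$. For the reverse inequality I induct on $\ell(\sigma)$: it suffices to produce, for every $\sigma\neq e$, a generator strictly decreasing $\ell$. By the two descent criteria above, the absence of any such generator would force $\sigma(1)<_\pm\dots<_\pm\sigma(d)$ together with $\sigma(d)>0$; since the positive integers are the $<_\pm$-smallest elements, this makes every $\sigma(i)$ positive and increasing, i.e. $\sigma(i)=i$, so $\sigma=e$. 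Thus a length-decreasing generator always exists when $\sigma\neq e$, the induction gives $l^\pm(\sigma)\le\ell(\sigma)$, and the two bounds combine to give $l^\pm=\ell$.
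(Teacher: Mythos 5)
Your proof is correct and takes essentially the same route as the paper: the $\pm 1$ analysis for each generator, where your count of the $d-k$ flipped inversions $(i,d)$ (one for each magnitude $m\in\{k+1,\dots,d\}$) is exactly the paper's identity $\nu_1+\nu_4=d-a$, followed by the standard two-inequality comparison with a decreasing generator exhibited for every $\sigma\neq e$. The only cosmetic difference is that the paper organizes the $s_d$ computation via a four-case table of sign patterns rather than your rank bookkeeping.
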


Proposition 3.1 of \cite{Br} gives a different formula 
(with respect to a slightly different 
generating set).

We call a pair $0<i<j\leq d$ with 
$\sigma(i)>_\pm \sigma(j)$
an \emph{inversion} of $\sigma$.
In order to avoid the use of the somewhat exotic order relation 
$<_\pm$ defined by (\ref{pmorderonZ})
one can replace the condition $\sigma(i)>_\pm \sigma(j)$ with the 
equivalent condition $\sigma(i)\sigma(j)(\sigma(i)-\sigma(j))>0$.

Observe that Formula (\ref{defsignedlength}) boils down to
Formula (\ref{defmajind}) for elements in the subgroup $\mathcal S_d
\subset \mathcal S_d^\pm$ of ordinary
(unsigned) permutations.

The Weyl-Major index of an element $\sigma$ in  $\mathcal S^\pm_d$ 
is defined by 
\begin{align}\label{defsignedmaj}
\mathop{Wmaj}(\sigma)&=\sum_{0<i,\sigma(i)>\sigma(i+1)}i+\sum_{i>0,\sigma(i)<0}1\ .
\end{align}

We call the polynomial
\begin{align}\label{defsignedMd}
M_d^\pm=M_d^\pm(q,t)&=\sum_{\sigma\in\mathcal S_d^{\pm}}q^{l^\pm(\sigma)}t^{\mathop{Wmaj}(\sigma)}
\end{align}
the \emph{Weyl-Mahonian statistics} of $\mathcal S_d^\pm$.
It encodes the number of elements of the hyperoctahedral group 
$\mathcal S_d^\pm$ with given length and Weyl-Major index.

We have:

\begin{thm}\label{thmsympl} We have
$$\sum_{F\in\mathcal{WF}(\mathbb F_q^{2d},\omega)}t^{w(F)}=M_d^{\pm}
\prod_{j=1}^d\frac{1}{1-t^j}\ .$$
\end{thm}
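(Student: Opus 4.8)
The plan is to run, for isotropic flags, the same argument that proves Theorem \ref{mainthm} for ordinary flags, replacing the symmetric group by the hyperoctahedral group $\mathcal S_d^\pm$. First I would stratify the weighted isotropic flags of $(\mathbb F_q^{2d},\omega)$ according to the set $S=\{d_1<\dots<d_k\}\subseteq\{1,\dots,d\}$ of dimensions actually occurring (isotropy forces every $d_i\le d$). Writing $\tilde g_q(S)$ for the number of isotropic flags with dimension-set $S$ and summing the geometric series over the free weights $w_1,\dots,w_k\ge 1$, the left-hand side becomes
\begin{align*}
\sum_{F\in\mathcal{WF}(\mathbb F_q^{2d},\omega)}t^{w(F)}=\sum_{S\subseteq\{1,\dots,d\}}\tilde g_q(S)\prod_{d_i\in S}\frac{t^{d_i}}{1-t^{d_i}}\ .
\end{align*}
Clearing the denominator $\prod_{j=1}^d(1-t^j)$ then reduces the theorem to the polynomial identity
\begin{align*}
M_d^\pm=\sum_{S\subseteq\{1,\dots,d\}}\tilde g_q(S)\,t^{\sum_{d_i\in S}d_i}\prod_{j\in\{1,\dots,d\}\setminus S}(1-t^j)\ ,
\end{align*}
formally identical to the type A situation.

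Next I would supply the two structural ingredients. The first is the Bruhat decomposition of the partial isotropic flag variety of $Sp_{2d}$ (equivalently, a direct count of isotropic subspaces and their isotropic extensions): it expresses $\tilde g_q(S)$ as the descent-class sum $\sum_\sigma q^{l^\pm(\sigma)}$ over those $\sigma\in\mathcal S_d^\pm$ whose right descent set $\mathrm{Des}^\pm(\sigma)\subseteq\{1,\dots,d\}$, taken with respect to the generators $(1,2),\dots,(d-1,d),(d,-d)$ (the sign change being indexed by $d$), is contained in $S$. Here, by Proposition \ref{propsignedlength}, an interior index $i<d$ lies in $\mathrm{Des}^\pm(\sigma)$ iff $\sigma(i)>_\pm\sigma(i+1)$, while $d\in\mathrm{Des}^\pm(\sigma)$ iff $\sigma(d)<0$. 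The second ingredient is the telescoping identity $\sum_{S_0\subseteq R}\prod_{j\in S_0}t^j\prod_{j\in R\setminus S_0}(1-t^j)=\prod_{j\in R}\bigl(t^j+(1-t^j)\bigr)=1$, applied to the free set $R=\{1,\dots,d\}\setminus\mathrm{Des}^\pm(\sigma)$. Substituting the first ingredient, interchanging the two summations and collapsing the inner sum over all $S\supseteq\mathrm{Des}^\pm(\sigma)$ by the second would leave
\begin{align*}
M_d^\pm=\sum_{\sigma\in\mathcal S_d^\pm}q^{l^\pm(\sigma)}\,t^{\sum_{j\in\mathrm{Des}^\pm(\sigma)}j}\ .
\end{align*}

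It then remains to identify the exponent, and this is the step I expect to be the main obstacle: one must prove the purely combinatorial identity $\sum_{j\in\mathrm{Des}^\pm(\sigma)}j=\mathop{Wmaj}(\sigma)$ for every $\sigma\in\mathcal S_d^\pm$, with the right-hand side given by Formula (\ref{defsignedmaj}). The difficulty is that the two sides are organised quite differently: the interior descents contributing to $\mathrm{Des}^\pm(\sigma)$ are recorded with respect to the exotic order $<_\pm$, whereas the major part of $\mathop{Wmaj}$ uses the ordinary order on $\mathbb Z$; moreover the Lagrangian node contributes the single value $d$ to the left-hand side, while $\mathop{Wmaj}$ instead adds the total number $\#\{i:\sigma(i)<0\}$ of negative entries. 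Hence no term-by-term matching is available and the equality must be argued globally. I would establish it by analysing, again through Proposition \ref{propsignedlength}, how right multiplication by each generator changes the length, and then checking that the passage from ``$<_\pm$-descents plus the value $d$'' to ``ordinary descents plus the negative-count'' preserves the weighted sum; an induction on $l^\pm(\sigma)$, peeling off one generator at a time and tracking the effect on both expressions, is the cleanest way to organise the bookkeeping. Once this identity is in hand the theorem follows, and the type-C analogue of Corollary \ref{correc} can be read off from the same stratification exactly as in type A.
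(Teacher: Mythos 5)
Your proposal is correct, and it reaches the theorem by a genuinely different route than the paper. The paper never stratifies by dimension sets: it attaches to every symplectic flag a canonical half-basis and a length-permutation $\sigma\in\mathcal S_d^\pm$, proves by an explicit coordinate count (symplectic Rothe diagrams, Proposition \ref{propdimhalfbase}) that the half-bases with given $\sigma$ form a cell of size $q^{l^\pm(\sigma)}$, proves that the associated standard flag has standard weight $\mathop{Wmaj}(\sigma)$ (Proposition \ref{propsignedstandardweight}, by induction on $d$, erasing $\sigma(d)$ and checking a six-case table), and then accounts for all weighted refinements of a standard flag by the partition factor $\prod_{j=1}^d(1-t^j)^{-1}$ (the symplectic analogues of Proposition \ref{propweightedflagstandardflag} and Corollary \ref{corpartitionstandard}); your telescoping over $S\supseteq\mathrm{Des}^\pm(\sigma)$ plays exactly the role of that refinement argument. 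Two remarks on your two ingredients. First, your descent-class formula $\tilde g_q(S)=\sum_{\mathrm{Des}^\pm(\sigma)\subseteq S}q^{l^\pm(\sigma)}$ is the standard parabolic Bruhat/minimal-coset-representative fact (available in \cite{BB01}); you import it as background, whereas the paper in effect reproves it by elementary linear algebra over $\mathbb F_q$ — that is what the half-basis normal forms buy, and they are reused verbatim for types B and D. Second, the identity you single out as the main obstacle, $\sum_{j\in\mathrm{Des}^\pm(\sigma)}j=\mathop{Wmaj}(\sigma)$, is true and is precisely Proposition \ref{propsignedstandardweight} in combinatorial disguise, since the dimensions of a standard flag sit exactly at the descent positions (with the Lagrangian accounting for the node $d$). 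Moreover it admits a cheaper proof than the generator-peeling induction you sketch: the orders $<$ and $<_\pm$ agree on pairs of equal sign and disagree exactly on sign changes, so grouping the negative entries of $\sigma$ into maximal runs of consecutive positions $[a,b]$ shows that the $<_\pm$-descent sum exceeds the ordinary descent sum by $\sum_{\mathrm{runs}}\bigl(b\,\delta(b<d)-(a-1)\bigr)$, which equals $\#\{i:\sigma(i)<0\}-d\,\delta(\sigma(d)<0)$; this is exactly the discrepancy between the two bookkeepings, term by run rather than term by term. In sum: your route is shorter granted standard Coxeter-group background, while the paper's is self-contained and constructive; both are sound.
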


Theorem \ref{thmsympl} generalizes Theorem 
\ref{mainthm} in the following sense: Suppose 
that the first $d$ basis elements of $\mathbb F^{2d}$
span a Lagrangian $L$ in $\mathbb F^{2d}$.
Weighted symplectic flags contained in $L$ correspond
to weighted ordinary flags of $\mathbb F^d$ and are 
enumerated by restricting the sum 
in (\ref{defsignedMd}) to the subgroup $\mathcal S_d$
of ordinary permutations.

Weyl groups of type B are also covered by Theorem
\ref{thmsympl} as shown by the following result:

\begin{thm}\label{thmBequalC} We have
$$\sum_{F\in\mathcal{WF}(\mathbb F_q^{2d},\omega)}t^{w(F)}=
\sum_{F\in\mathcal{WF}(\mathbb F_q^{2d+1},Q)}t^{w(F)}$$
where $\mathcal{WF}(\mathbb F_q^{2d+1},Q)$ denotes the set of
weighted flags over a non-degenerate quadratic space 
$(\mathbb F_q^{2d+1},Q)$ of odd dimension $2d+1$ 
over a finite field  $\mathbb F_q$ of odd characteristic 
such that the quadratic form 
$Q$ admits a $d$-dimensional isotropic subspace.
\end{thm}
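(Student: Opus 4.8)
The plan is to reduce the identity of generating functions to an identity of flag counts, and to establish that by counting isotropic subspaces frame by frame. Since $w(F)=\sum_i w_i\dim(V_i)$ depends only on the dimension-sequence $\mathbf d=(d_1<\dots<d_k)$, $d_i=\dim(V_i)$, and on the weights, I would first sum over the weights: grouping the weighted flags of a space $(V,b)$ by their underlying dimension-sequence and summing each geometric series $\sum_{w_i\geq 1}t^{w_id_i}=t^{d_i}/(1-t^{d_i})$ independently gives
$$\sum_{F\in\mathcal{WF}(V,b)}t^{w(F)}=\sum_{\mathbf d}N_b(\mathbf d)\prod_{i=1}^k\frac{t^{d_i}}{1-t^{d_i}},$$
where $N_b(\mathbf d)$ is the number of isotropic flags of type $\mathbf d$ and the outer sum ranges over all strictly increasing finite sequences bounded by the Witt index. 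Both $(\mathbb F_q^{2d},\omega)$ and $(\mathbb F_q^{2d+1},Q)$ have Witt index $d$, so the two expansions run over the same sequences, and the theorem follows once I show $N_\omega(\mathbf d)=N_Q(\mathbf d)$ for every admissible $\mathbf d$.

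To compare these flag counts I would peel off the top subspace. Fixing a maximal member $V_k$ (isotropic, of dimension $d_k$), any chain $V_1\subset\dots\subset V_{k-1}\subset V_k$ of isotropic subspaces inside $V_k$ is simply an \emph{arbitrary} flag of the vector space $V_k\cong\mathbb F_q^{d_k}$, because every subspace of a totally isotropic space is isotropic; the number of such inner flags is a product of $q$-binomial coefficients depending only on $(d_1,\dots,d_{k-1};d_k)$ and \emph{not} on the ambient form. Hence $N_b(\mathbf d)$ factors as the number of isotropic $d_k$-subspaces of $(V,b)$ times a form-independent quantity, and the whole problem reduces to the single statement that a symplectic $2d$-space and a Witt-index-$d$ orthogonal $(2d+1)$-space contain the same number of isotropic $m$-subspaces for each $m\leq d$. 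I would prove this by counting ordered isotropic frames and dividing by the (form-independent) number of bases of an $m$-space: the count of frames rests on the observation that, for a totally isotropic $j$-subspace $U$, the number of isotropic vectors lying in $U^\perp$ equals $q^{2d-j}$ in both geometries, so each frame admits exactly $q^{2d-j}-q^j$ isotropic extensions, the same in types B and C.

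The main obstacle is the quadratic-form bookkeeping of this last count, which is where $\mathrm{char}\,\mathbb F_q\neq 2$ is used. In the symplectic case every vector of $U^\perp$ (of dimension $2d-j$) is isotropic, giving $q^{2d-j}$ at once. In the orthogonal case I must check that $U^\perp/U$ carries a \emph{nondegenerate} quadratic form of odd dimension $2(d-j)+1$ and Witt index $d-j$ (this is exactly where the hypothesis that $Q$ has a $d$-dimensional isotropic subspace enters, guaranteeing the induction reaches dimension $d$), and then invoke the classical fact that a nondegenerate quadratic form in an odd number $2m+1$ of variables over $\mathbb F_q$ has precisely $q^{2m}$ zeros. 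Lifting the $q^{2(d-j)}$ isotropic cosets back to $U^\perp$ multiplies by $q^j$ and reproduces $q^{2d-j}$, matching the symplectic count. Once these standard facts on orthogonal spaces in odd characteristic are established, the frame recursion and the geometric-series resummation are routine, and they simultaneously reprove, for type B, the factorization $M_d^\pm\prod_{j=1}^d(1-t^j)^{-1}$ of Theorem \ref{thmsympl}.
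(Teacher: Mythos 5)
Your proof is correct, but it follows a genuinely different route from the paper. The paper's (sketched) argument is cellular: it fixes the quadratic form $Q(x)=x_0^2+\sum_{i=1}^d x_ix_{-i}$ on $\mathbb F_q^{2d+1}$ and exhibits, for each signed permutation $\lambda\in\mathcal S_d^\pm$, a coordinate-by-coordinate matching between the set $\mathcal B_C(\lambda)$ of canonical bases in the symplectic space and the set $\mathcal B_B(\lambda)$ of canonical bases in the odd orthogonal space, checking that the pattern of free, prescribed and orthogonality-determined coordinates is identical in both types (the coordinate $x_{-\lambda(i)}$ is forced by isotropy in type B, but $x_0$ becomes free exactly when $\lambda(i)<0$, so the tally of free coordinates is unchanged). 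This refines the theorem: both flag varieties carry Bruhat-type decompositions indexed by $\mathcal S_d^\pm$ with cells of size $q^{l^\pm(\sigma)}$ and matching standard weights, which is what feeds directly into the Weyl--Mahonian statistics of Theorem \ref{thmstandsympl}. You instead prove only the coarse identity, but by an elementary global count: resumming the geometric series in the weights reduces the claim to the equality of the numbers $N_b(\mathbf d)$ of isotropic flags of each dimension type; peeling off the top subspace shows the inner count is a form-independent $q$-multinomial; and the remaining comparison of Grassmannian counts is exactly assertion (ii) of Lemma \ref{lemistropBC}, which the paper states without proof and which your frame-counting argument (via the classical fact that a nondegenerate odd-dimensional quadratic form in $2m+1$ variables over $\mathbb F_q$ has $q^{2m}$ zeros, so that isotropic vectors in $U^\perp$ number $q^{2d-j}$ in both geometries) actually supplies. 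Two small remarks: over $\mathbb F_q$ in odd characteristic every nondegenerate quadratic form of dimension $2d+1$ automatically has Witt index $d$, so the hypothesis you invoke is vacuously satisfied rather than genuinely needed for your induction; and since the count of zeros of an odd-dimensional nondegenerate form is independent of its discriminant class, your verification of nondegeneracy of $U^\perp/U$ suffices and the Witt-index bookkeeping there is redundant. In exchange for losing the cell-by-cell refinement, your approach buys self-containedness: it proves the lemma the paper delegates to the reader and avoids the canonical-basis machinery entirely.
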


Theorem \ref{thmsympl} implies the following corollary 
which expresses the Weyl-Mahonian statistics $M_d^{\pm}$ on
$\mathcal S_d^\pm$ in terms of
ordinary Mahonian statistic $M_0,\dots,M_d$ (given recursively by 
Corollary \ref{correc}) on symmetric groups:

\begin{cor}\label{corformulaMdpm} We have
$$M_d^{\pm}=\sum_{k=0}^dt^k\left(\prod_{j=0}^{k-1}\frac{1-q^{2d-2j}}{1-q^{k-j}}\right)
\left(\prod_{j=k+1}^d(1-t^j)\right)M_k\ .$$
\end{cor}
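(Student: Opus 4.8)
The plan is to derive Corollary \ref{corformulaMdpm} directly from Theorem \ref{thmsympl} by counting the set $\mathcal{WF}(\mathbb F_q^{2d},\omega)$ according to the dimension of the largest (i.e. last) isotropic subspace in each weighted symplectic flag. Concretely, I would organize every weighted symplectic flag $F$ in $\mathbb F_q^{2d}$ by the integer $k=\dim(V_{\max})$ where $V_{\max}$ is the top subspace appearing in the flag; since all subspaces are isotropic and the maximal isotropic dimension is $d$, this index ranges over $k=0,1,\dots,d$. For a fixed top isotropic subspace $L$ of dimension $k$, the weighted symplectic subflags whose top space is exactly $L$ are in bijection with weighted \emph{ordinary} flags of $L\cong\mathbb F_q^k$ that actually use $L$ as their last term (with some positive weight assigned to $L$). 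The generating series for those, after factoring out the weight contributed by $L$ itself and by the partition-generating factor, is controlled by $M_k$ via Theorem \ref{mainthm}.

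First I would set up the bookkeeping. By Theorem \ref{thmsympl}, the left side equals $M_d^\pm\prod_{j=1}^d(1-t^j)^{-1}$, and by Theorem \ref{mainthm} applied inside $L\cong\mathbb F_q^k$ the contribution of flags living in a fixed $L$ is $M_k\prod_{j=1}^k(1-t^j)^{-1}$. The number of $k$-dimensional isotropic subspaces of $(\mathbb F_q^{2d},\omega)$ is the classical Gaussian count $\prod_{j=0}^{k-1}\frac{q^{2d-2j}-1}{q^{k-j}-1}$, which already matches the $q$-factor $\prod_{j=0}^{k-1}\frac{1-q^{2d-2j}}{1-q^{k-j}}$ appearing in the statement. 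So the strategy is to write
\begin{align*}
M_d^{\pm}\prod_{j=1}^d\frac{1}{1-t^j}
&=\sum_{k=0}^d\left(\prod_{j=0}^{k-1}\frac{1-q^{2d-2j}}{1-q^{k-j}}\right)
\left(\text{weight factor for }L\right)\;M_k\prod_{j=1}^k\frac{1}{1-t^j},
\end{align*}
and then to match this, after clearing the denominator $\prod_{j=1}^d(1-t^j)^{-1}$ uniformly, against the claimed formula. The ratio $\prod_{j=1}^d(1-t^j)^{-1}\big/\prod_{j=1}^k(1-t^j)^{-1}=\prod_{j=k+1}^d(1-t^j)^{-1}$ is precisely what must cancel against the advertised factor $\prod_{j=k+1}^d(1-t^j)$, so the two partition-type factors are inverse to each other up to the correct tail.

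The delicate point — and the step I expect to be the main obstacle — is the ``weight factor for $L$'', namely verifying that summing over how the last subspace $L$ of dimension $k$ carries its weight, together with the bijection of Remark \ref{remmultiflag}, produces exactly the factor $t^k$ and no spurious extra $t$-dependence. A weighted symplectic flag with top space $L=V_{\max}$ of dimension $k$ contributes $t^{w(F)}$, and the innermost weight $w_{\max}\geq 1$ on $L$ contributes $t^{w_{\max}\dim(L)}=t^{w_{\max}k}$; summing $w_{\max}\geq 1$ gives a geometric factor $t^k/(1-t^k)$, and I must check that this geometric factor, after being absorbed into the identification with $M_k\prod_{j=1}^k(1-t^j)^{-1}$, collapses correctly. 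In fact the cleanest route is to observe that enumerating ordinary weighted flags of $\mathbb F_q^k$ \emph{whose top space is the full space} $\mathbb F_q^k$ (rather than a proper subspace) is what carries the leading $t^k$: the generating function for such ``full'' weighted flags is $t^k M_k\prod_{j=1}^{k-1}(1-t^j)^{-1}$, since forcing the last subspace to be all of $\mathbb F_q^k$ removes one degree of freedom and shifts the weight by the minimal contribution $t^{k}$.

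Thus the concrete plan is: (i) decompose $\mathcal{WF}(\mathbb F_q^{2d},\omega)$ over the dimension $k$ of the top isotropic subspace; (ii) for each $k$, split off the Gaussian count $\prod_{j=0}^{k-1}\frac{1-q^{2d-2j}}{1-q^{k-j}}$ of top isotropic spaces $L$; (iii) identify the weighted subflags ending at $L$ with weighted flags of $\mathbb F_q^k$ having full-dimensional top, whose series is $t^k M_k\prod_{j=1}^{k-1}(1-t^j)^{-1}$; (iv) assemble, obtaining
\begin{align*}
M_d^{\pm}\prod_{j=1}^d\frac{1}{1-t^j}
&=\sum_{k=0}^d\left(\prod_{j=0}^{k-1}\frac{1-q^{2d-2j}}{1-q^{k-j}}\right)
t^k\,M_k\prod_{j=1}^{k-1}\frac{1}{1-t^j};
\end{align*}
and (v) multiply both sides by $\prod_{j=1}^d(1-t^j)$ and simplify each summand, where $\prod_{j=1}^d(1-t^j)\big/\prod_{j=1}^{k-1}(1-t^j)=(1-t^k)\prod_{j=k+1}^d(1-t^j)$. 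The surviving factor $(1-t^k)$ is exactly what must be reconciled with the difference between ``top space equal to $L$'' and ``top space at most $L$''; tracking this $(1-t^k)$ carefully against the innermost geometric weight sum $t^k/(1-t^k)$ is the heart of the argument, and once it cancels to leave $t^k\prod_{j=k+1}^d(1-t^j)$ the claimed identity follows term by term. The combinatorial bijection of Remark \ref{remmultiflag} is what makes the weight bookkeeping unambiguous, and I would lean on it to justify step (iii) rigorously.
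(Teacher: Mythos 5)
Your route is essentially the paper's own proof: the paper likewise decomposes $\mathcal{WF}(\mathbb F_q^{2d},\omega)$ according to the top isotropic subspace $L$, counts the choices of $L$ by Lemma \ref{lemistropBC}(ii) (your ``classical Gaussian count'', which the paper also states without proof), identifies the weighted flags inside $L$ with ordinary weighted flags of $\mathbb F_q^k$ via Theorem \ref{mainthm}, rewrites the left-hand side by Theorem \ref{thmsympl}, and multiplies through by $\prod_{j=1}^d(1-t^j)$. So there is no methodological divergence to report; the only issue is the bookkeeping slip that you flag yourself in steps (iii)--(iv).

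Concretely: the generating series of weighted flags of $\mathbb F_q^k$ whose top space is all of $\mathbb F_q^k$ is not $t^kM_k\prod_{j=1}^{k-1}(1-t^j)^{-1}$ but $t^kM_k\prod_{j=1}^{k}(1-t^j)^{-1}$. The cleanest bijection (implicit in the paper's proof of Corollary \ref{correc}) subtracts $1$ from the weight of the top space, dropping that space if its weight becomes $0$; this identifies full-top weighted flags with \emph{arbitrary} weighted flags of $\mathbb F_q^k$ and multiplies the full series $M_k\prod_{j=1}^{k}(1-t^j)^{-1}$ by $t^k$. Equivalently, your geometric sum over the top weight contributes the whole factor $t^k/(1-t^k)$, not merely its numerator $t^k$, times the non-maximal series $M_k\prod_{j=1}^{k-1}(1-t^j)^{-1}$. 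Already for $k=1$ your claimed full-top series $t$ misses the flags $(\mathbb F_q;w_1)$ with $w_1\geq 2$, whose true series is $t/(1-t)$, so the display in step (iv) is false as an identity. With the corrected factor, clearing denominators gives $\prod_{j=1}^{d}(1-t^j)\big/\prod_{j=1}^{k}(1-t^j)=\prod_{j=k+1}^{d}(1-t^j)$ and the corollary drops out term by term: there is no residual $(1-t^k)$ to reconcile, and the step you call ``the heart of the argument'' dissolves. Since you correctly anticipated the cancellation $(1-t^k)\cdot t^k/(1-t^k)=t^k$, the argument is sound once step (iv) is amended, and it then coincides with the paper's proof line for line.
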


Corollary \ref{corformulaMdpm} is of course an analogon of Corollary
\ref{correc} for hyperoctahedral groups.

\begin{rem} The function defined by (\ref{defsignedmaj}) is called
Weyl-Mahonian index in order to avoid confusion with the flag-Major
index of \cite{AR} extending the Major
index in a different way to $\mathcal S_d^\pm$ (and more generally to 
wreath-products of $\mathcal S_d$ with finite cyclic group). 
\end{rem}

\section{Main results for flags of type D}\label{sectflagD}

We denote by $\mathcal H=\mathcal H(\mathbb F)$ the hyperbolic plane
over a field $\mathbb F$
realized as the quadratic space $\mathbb F^2$ endowed with a norm given 
by the quadratic form $(x,y)\longmapsto xy$.

We denote by $\mathcal I$ a fixed maximal $d$-dimensional 
isotropic subspace (also called a \emph{metabolizer}) 
of $\mathcal H^d$. As always, a 
flag $F=(V_1\subset \dots\subset V_k)$ is again a strictly increasing sequence
of non-trivial isotropic subspaces of $\mathcal H^d$ (with $\mathcal H^d$
denoting the orthogonal sum of $d$ copies of $\mathcal H$). The 
\emph{$\mathcal{I}$-parity} (or simply the \emph{parity}) of a flag 
ending with $V_k$ is the parity of the integer $\dim(V_k/(V_k\cap \mathcal I))
=\dim(V_k)-\dim(V_k\cap \mathcal I)$. Flags of even parity are simply
called even flags. We denote by ${\mathcal F}^e(\mathcal H^d)$
the set of all even flags and by $\mathcal{WF}^e(\mathcal H^d)$
the set of all weighted even flags.

We consider now Weyl groups of type D, given by the subgroup
$\mathcal S_d^D$ of the hyperoctahedral $\mathcal S_d^\pm$ consisting of
all signed permutations $\sigma$ such that $\prod_{i=1}^d\sigma(i)=d!$.
Given an element $\sigma\in\mathcal S_d^D$ we set 
\begin{align}\label{deflengthD}
l^D(\sigma)&=\sum_{0<i<j,\sigma(i)>_\pm \sigma(j)}1+
\sum_{0<i,\sigma(i)<0}(d+\sigma(i))\ .
\end{align}
We will see in Proposition \ref{proplengthD} that $l^D$ defines
the natural length function on Weyl groups of type $D$.

The Weyl-Major index $\mathop{Wmaj}(\sigma)$ of elements in $S_d^D$ 
coincides with the Weyl-Major index in $S_d^\pm$ and is also given by 
Formula (\ref{defsignedmaj}).

The generating series of Weyl-Major statistics on $\mathcal S_d^D$ 
has a nice factorization given by the following result:

\begin{thm}\label{mainthmtypeD} We have 
\begin{align}\label{formflagMajDqeqone}
\sum_{\sigma\in \mathcal S_d^D}t^{\mathop{Wmaj}(\sigma)}
&=\frac{(1-t)^d+(1+t)^d}{2}\prod_{j=1}^d\frac{1-t^j}{1-t}\ .
\end{align}
\end{thm}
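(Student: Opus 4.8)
The plan is to reduce the identity to a single two-variable generating function over the whole hyperoctahedral group, and then isolate the type-D subgroup by sign-averaging. Throughout write $\mathrm{maj}(\sigma)=\sum_{0<i,\ \sigma(i)>\sigma(i+1)}i$ for the ordinary major index (descents taken with respect to the usual order) and $\mathrm{neg}(\sigma)=\#\{i>0:\sigma(i)<0\}$, so that (\ref{defsignedmaj}) reads $\mathop{Wmaj}(\sigma)=\mathrm{maj}(\sigma)+\mathrm{neg}(\sigma)$. Because $|\sigma(1)|,\dots,|\sigma(d)|$ is a permutation of $1,\dots,d$, we have $\prod_{i=1}^d\sigma(i)=(-1)^{\mathrm{neg}(\sigma)}\,d!$, so the defining condition $\prod_i\sigma(i)=d!$ of $\mathcal S_d^D$ is equivalent to $\mathrm{neg}(\sigma)$ being even.

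Introduce the auxiliary polynomial $Q_d(u,t)=\sum_{\sigma\in\mathcal S_d^\pm}u^{\mathrm{neg}(\sigma)}t^{\mathrm{maj}(\sigma)}$. By the parity selection above, $\sum_{\sigma\in\mathcal S_d^D}t^{\mathop{Wmaj}(\sigma)}=\frac12\big(Q_d(t,t)+Q_d(-t,t)\big)$, since $Q_d(t,t)$ collects $t^{\mathrm{maj}+\mathrm{neg}}$ over all of $\mathcal S_d^\pm$ while $Q_d(-t,t)$ inserts the sign $(-1)^{\mathrm{neg}}$ and the average retains precisely the even-$\mathrm{neg}$ terms. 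Thus everything follows from the key identity $Q_d(u,t)=(1+u)^d\prod_{j=1}^d\frac{1-t^j}{1-t}$, whose specialisations $u=t$ and $u=-t$ produce the factors $(1+t)^d$ and $(1-t)^d$.

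To prove the key identity I would argue by induction on $d$ using insertion of the extreme letters. Every $\sigma\in\mathcal S_d^\pm$ is obtained uniquely from a signed permutation $\tau$ of $\{\pm1,\dots,\pm(d-1)\}$ by inserting either $+d$ or $-d$ into one of the $d$ gaps of the word $\tau(1)\cdots\tau(d-1)$; the entries of $\tau$ all lie strictly between $-d$ and $+d$, so $+d$ is the global maximum and $-d$ the global minimum of the enlarged word. The classical major-index insertion lemma states that inserting the largest letter into the $d$ gaps of a word of length $d-1$ realises each major index in $\mathrm{maj}(\tau)+\{0,1,\dots,d-1\}$ exactly once; the companion statement for insertion of the smallest letter is proved the same way and yields the identical set of increments. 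Inserting $+d$ leaves $\mathrm{neg}$ unchanged and contributes the factor $[d]_t:=1+t+\cdots+t^{d-1}$, while inserting $-d$ raises $\mathrm{neg}$ by one and contributes $u\,[d]_t$. Summing, $Q_d(u,t)=(1+u)\,[d]_t\,Q_{d-1}(u,t)$, and with $Q_0=1$ this telescopes to $(1+u)^d\prod_{j=1}^d[j]_t=(1+u)^d\prod_{j=1}^d\frac{1-t^j}{1-t}$.

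Assembling the two specialisations gives $\sum_{\sigma\in\mathcal S_d^D}t^{\mathop{Wmaj}(\sigma)}=\frac{(1-t)^d+(1+t)^d}{2}\prod_{j=1}^d\frac{1-t^j}{1-t}$, as claimed. The only non-formal ingredient is the major-index insertion lemma together with the verification that inserting the minimal letter produces the same distribution of increments as inserting the maximal one; this is the step I would check most carefully, since the whole reduction hinges on both insertions contributing the identical factor $[d]_t$. Alternatively, the value $Q_d(t,t)$ can be recovered as the $q=1$ specialisation of $M_d^\pm$ through Corollary \ref{corformulaMdpm}, but obtaining $Q_d(-t,t)$, which records the sign parity, seems to require the insertion argument anyway, so I would present the self-contained inductive proof.
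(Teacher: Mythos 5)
Your proof is correct, but it takes a genuinely different route from the paper's. The paper obtains the identity as the $q=1$ shadow of its flag-geometric machinery: it evaluates the recursion of Corollary \ref{corformMD} (which rests on Theorem \ref{thmwgtflagsD} and the subspace count of Proposition \ref{propnbrflagsD}) at $q=1$, uses Proposition \ref{propevqeqoneofMd} together with $\sum_{l\geq 0}\binom{k}{2l}=2^{k-1}$ for $k\geq 1$, and then finishes by the binomial theorem applied to $\sum_k\binom{d}{k}\left(\frac{2t}{1-t}\right)^k$. You instead stay entirely inside the group: splitting $\mathop{Wmaj}=\mathrm{maj}+\mathrm{neg}$, recognizing $\mathcal S_d^D$ as the even-$\mathrm{neg}$ subgroup (correct, since $\prod_i\sigma(i)=(-1)^{\mathrm{neg}(\sigma)}d!$), sign-averaging, and proving the refined identity
\begin{align*}
\sum_{\sigma\in\mathcal S_d^\pm}u^{\mathrm{neg}(\sigma)}t^{\mathrm{maj}(\sigma)}
&=(1+u)^d\prod_{j=1}^d\frac{1-t^j}{1-t}
\end{align*}
by inserting $\pm d$ into words of length $d-1$. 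The one step you rightly flag — that inserting the smallest letter realizes the same increment multiset $\{0,1,\dots,d-1\}$ as inserting the largest — is indeed valid: complementation (negating all letters) turns smallest-letter insertion into largest-letter insertion, complements the descent set, and sends $\mathrm{maj}$ of a length-$n$ word $v$ to $\binom{n}{2}-\mathrm{maj}(v)$, so the increment set $\{0,\dots,n-1\}$ is carried to itself. Your approach is more elementary and self-contained (no weighted-flag counts, no $q$-binomial identities) and actually proves more: the displayed product formula exhibits the independence of $\mathrm{maj}$ and $\mathrm{neg}$ over $\mathcal S_d^\pm$, from which both Theorem \ref{mainthmtypeD} and the paper's stated factorization $M_d^\pm(1,t)=(1+t)^dM_d(1,t)$ drop out by the specializations $u=\pm t$ and $u=t$. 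What the paper's route buys in exchange is uniformity: its computation is the $q=1$ specialization of a recursion that keeps track of the full bivariate statistic $(l^D,\mathop{Wmaj})$, whereas your argument says nothing about the length statistic and would not by itself recover Corollary \ref{corformMD}.
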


We call the polynomial
\begin{align}\label{defflagMahD}
M_d^D=M_d^D(q,t)&=\sum_{\sigma\in\mathcal S_d^D}q^{l^D(\sigma)}t^{\mathop{Wmaj}(\sigma)}
\end{align}
the \emph{Weyl-Mahonian statistics} of $\mathcal S_d^D$.
It encodes the number of elements of
$\mathcal S_d^D$ with given length and Weyl-Major index.

We have:
\begin{thm}\label{thmwgtflagsD} We have
$$\sum_{F\in\mathcal{WF}^e(\mathcal H^d_q)}t^{w(F)}=M_d^D
\prod_{j=1}^d\frac{1}{1-t^j}$$
with $\mathcal H_q^d$ denoting the orthogonal sum of 
$d$ hyperbolic planes over a finite field $\mathbb F_q$ of odd characteristic
and with $\mathcal{WF}^e(\mathcal H^d_q)$ denoting the set of 
all weighted even flags (with respect to a fixed maximal isotropic 
subspace $\mathcal I$ of $\mathcal H_q^d$).
\end{thm}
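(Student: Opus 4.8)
The plan is to mimic the strategy that evidently underlies Theorems~\ref{mainthm} and~\ref{thmsympl}, namely to enumerate weighted even flags of $\mathcal H_q^d$ directly and to match the resulting count, term by term, against the Weyl-Mahonian polynomial $M_d^D$. First I would set up the Bruhat-type cellular decomposition of the space of (unweighted) even isotropic flags: every even flag of $\mathcal H_q^d$ lies in a unique cell indexed by an element $\sigma\in\mathcal S_d^D$, and the affine dimension of that cell is exactly $l^D(\sigma)$ as given by Formula~(\ref{deflengthD}). This is the geometric heart of the argument and I expect it to rely on Proposition~\ref{proplengthD}, which identifies $l^D$ with the Coxeter length on the type-D Weyl group, together with the standard dictionary between isotropic subspaces fixed by a maximal torus and elements of the Weyl group. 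The restriction to \emph{even} flags (those with $\dim(V_k)-\dim(V_k\cap\mathcal I)$ even) is precisely what singles out the subgroup $\mathcal S_d^D$ rather than all of $\mathcal S_d^\pm$, so I would check carefully that the parity condition on flags corresponds to the determinant condition $\prod_i\sigma(i)=d!$ defining $\mathcal S_d^D$.

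Next I would handle the weights. A weighted even flag is an even flag $V_1\subset\dots\subset V_k$ together with positive integers $w_1,\dots,w_k$, and its weight is $w(F)=\sum_i w_i\dim(V_i)$. Fixing the underlying flag in a given cell, the sum over all admissible weight-sequences factors as a product of geometric series, and summing the dimensions $\dim(V_i)$ against the descent positions of $\sigma$ is exactly what produces the exponent $\mathop{Wmaj}(\sigma)$ together with the universal partition factor $\prod_{j=1}^d(1-t^j)^{-1}$. Concretely, I would show that for each cell indexed by $\sigma$,
\begin{align*}
\sum_{w_1,\dots,w_k\geq 1} t^{w(F)}
&= t^{\mathop{Wmaj}(\sigma)}\prod_{j=1}^d\frac{1}{1-t^j}\,,
\end{align*}
after summing over the flag-types (equivalently, over the choices of which subspace dimensions actually appear) compatible with $\sigma$. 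The factor $\prod_j (1-t^j)^{-1}$ should emerge uniformly, independent of $\sigma$, just as in the type-A and type-C statements; the $\sigma$-dependent part, carrying the $t^{\mathop{Wmaj}(\sigma)}$, is what encodes the descent data. The extra contribution $\sum_{i>0,\sigma(i)<0}1$ appearing in~(\ref{defsignedmaj}) must be accounted for by the parity/sign-change structure peculiar to the type-D flags, and matching this term is the most delicate bookkeeping step.

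Combining the two pieces, I would sum over all cells, i.e.\ over all $\sigma\in\mathcal S_d^D$, weighting each by $q^{l^D(\sigma)}$ (the number of $\mathbb F_q$-points of the corresponding affine cell) and by the flag-contribution computed above. This yields
\begin{align*}
\sum_{F\in\mathcal{WF}^e(\mathcal H_q^d)}t^{w(F)}
&=\sum_{\sigma\in\mathcal S_d^D}q^{l^D(\sigma)}t^{\mathop{Wmaj}(\sigma)}
\prod_{j=1}^d\frac{1}{1-t^j}
=M_d^D\prod_{j=1}^d\frac{1}{1-t^j}\,,
\end{align*}
which is the assertion of Theorem~\ref{thmwgtflagsD}. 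The main obstacle I anticipate is not the cell-counting (that is parallel to the symplectic case of Theorem~\ref{thmsympl}) but rather verifying that the cell dimension is correctly $l^D(\sigma)$ rather than $l^\pm(\sigma)$: the type-D length in~(\ref{deflengthD}) differs from the type-C length in~(\ref{defsignedlength}) precisely by the shift from $(d+1+\sigma(i))$ to $(d+\sigma(i))$ in the sign-contribution, and I would need to trace this discrepancy back to the geometry, showing that the even-parity constraint removes exactly one dimension's worth of freedom per negative value. Establishing this dimension count rigorously — ideally by a clean comparison with the index-two inclusion $\mathcal S_d^D\subset\mathcal S_d^\pm$ and the corresponding inclusion of flag spaces — is where the real work lies.
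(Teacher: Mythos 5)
Your plan is exactly the paper's argument: the author proves the type-D case by carrying over the type-C machinery — canonical half-bases with length-permutation $\sigma\in\mathcal S_d^D$ forming affine cells of dimension $l^D(\sigma)$, standard flags of standard weight $\mathop{Wmaj}(\sigma)$, and the partition factor $\prod_{j=1}^d(1-t^j)^{-1}$ from the weight-multiplicities — and you even pinpoint the one genuine difference the paper singles out, namely that the coefficient of index $-\lambda(i)$ in $f_i$ is forced by isotropy (rather than free, as in the symplectic case when $\lambda(i)<0$), which is precisely what replaces the sign-contribution $d+1+\sigma(i)$ of~(\ref{defsignedlength}) by $d+\sigma(i)$ in~(\ref{deflengthD}). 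Your proposal is correct and takes essentially the same route.
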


The analogue of Corollaries \ref{correc} and \ref{corformulaMdpm} for type D
is given by:

\begin{cor}\label{corformMD} The polynomials $M_d^D$ for the 
Weyl-Mahonian statistics on
Weyl groups of type $D$ are given by the formula
\begin{align}\label{formulaflagMahtypeD}
M_d^D=\sum_{k=0}^dt^k\left(\prod_{j=k+1}^d(1-t^j)\right){d\choose k}_q
\left(\sum_{l=0}^{\lfloor k/2\rfloor}{k\choose 2l}_qq^{l(2d+2l-2k-1)}\right)M_k
\end{align}
where ${d\choose k}_q$ are $q$-binomials (see for exemple 
Corollary \ref{correc}) and
where $M_k$, defined for example by Corollary \ref{correc}, gives the 
Mahonian statistic on $\mathcal S_k$ .
\end{cor}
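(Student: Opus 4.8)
The goal is to prove the formula for $M_d^D$ in Corollary~\ref{corformMD}. The natural strategy is to combine Theorem~\ref{thmwgtflagsD}, which gives
$$\sum_{F\in\mathcal{WF}^e(\mathcal H^d_q)}t^{w(F)}=M_d^D\prod_{j=1}^d\frac1{1-t^j},$$
with a direct geometric count of the left-hand side, stratified by the dimension of the top space $V_k$ of a weighted even flag. This mirrors exactly the strategy that must underlie Corollary~\ref{correc} for type A and Corollary~\ref{corformulaMdpm} for type~BC, so the main work is to identify the correct type-D enumeration of the top-dimensional isotropic subspaces and splice it into that template.

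\textbf{Key steps.} First I would recall the factorization pattern already visible in Corollaries~\ref{correc} and \ref{corformulaMdpm}: a weighted flag is recorded by its strictly increasing sequence of subspaces together with a weight-sequence, and summing $t^{w(F)}$ over all weight-sequences with a \emph{fixed} underlying chain $V_1\subset\dots\subset V_k$ of dimensions $d_1<\dots<d_k=:m$ produces the factor $\prod_{i}t^{d_i}/(1-t^{d_i})$. Grouping flags by the dimension $k$ of the \emph{bottom} of the chain (i.e.\ reindexing so that $k$ counts the isotropic directions actually used) and peeling off the partition-generating factor $\prod_{j=1}^d(1-t^j)^{-1}$ is what converts the raw flag count into the $M_k$ appearing on the right. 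The term $t^k\bigl(\prod_{j=k+1}^d(1-t^j)\bigr)M_k$ is thus inherited verbatim from the type-A recursion of Corollary~\ref{correc}, applied to flags living \emph{inside} the fixed metabolizer $\mathcal I$.

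\textbf{The type-D specific factor.} The only genuinely new ingredient is the coefficient
$${d\choose k}_q\left(\sum_{l=0}^{\lfloor k/2\rfloor}{k\choose 2l}_qq^{l(2d+2l-2k-1)}\right).$$
The plan is to interpret this as the number (a $q$-count) of maximal isotropic flag-tops of a given even parity that project onto a fixed $k$-dimensional isotropic subspace of $\mathcal H^d_q$. Concretely: the factor ${d\choose k}_q$ counts $k$-dimensional isotropic subspaces inside the metabolizer $\mathcal I$ (equivalently, $k$-subspaces of an abstract $\mathbb F_q^d$, via the type-A correspondence), while the inner sum accounts for the isotropic subspaces of $\mathcal H^d_q$ of dimension $k$ whose intersection with $\mathcal I$ has codimension $2l$ in them — the index $2l$ enforcing the even-parity condition $\dim(V_k)-\dim(V_k\cap\mathcal I)\equiv 0\pmod 2$ built into $\mathcal{WF}^e$. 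I would establish this by a standard Bruhat/Schubert-cell count: fixing $\dim(V_k\cap\mathcal I)=k-2l$, the number of ways to complete an isotropic subspace by choosing the remaining $2l$ ``off-metabolizer'' directions is governed by $q$-binomials in the hyperbolic geometry of $\mathcal H^d_q$, and a direct cell-dimension computation yields the exponent $l(2d+2l-2k-1)$ and the factor ${k\choose 2l}_q$.

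\textbf{The main obstacle.} I expect the hard part to be the isotropic-subspace count producing $\sum_l {k\choose 2l}_q q^{l(2d+2l-2k-1)}$ together with the even-parity restriction; everything else is bookkeeping already forced by the type-A and type-BC cases. In particular, verifying that the exponent is exactly $l(2d+2l-2k-1)$ requires carefully tracking the dimension of the relevant Schubert cell (the affine directions transverse to $\mathcal I$ that keep the space isotropic), and checking that restricting from $\mathcal S_d^\pm$ to the index-two subgroup $\mathcal S_d^D$ corresponds precisely to retaining only the even-parity tops $2l$ rather than all $l$. Once that $q$-enumeration is pinned down, substituting into Theorem~\ref{thmwgtflagsD} and cancelling the partition factor gives Formula~(\ref{formulaflagMahtypeD}) directly.
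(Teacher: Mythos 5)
Your plan coincides with the paper's proof: it stratifies weighted even flags by their top isotropic subspace and reduces the type-D coefficient to counting $k$-dimensional isotropic subspaces $V$ of $\mathcal H^d_q$ with $\dim(V/(V\cap\mathcal I))=2l$, which is exactly the paper's Proposition \ref{propnbrflagsD} specialized to even codimension (the paper establishes it by lifting an $l$-dimensional subspace of the quotient $\mathcal H^d/\mathcal I$ to an isotropic subspace, adding a complement inside $\mathcal I$, and correcting for multiplicity by $q^{l(k-l)}$ --- the same count you propose via cell dimensions), after which one splices the result into the template of Corollaries \ref{correc} and \ref{corformulaMdpm} using Theorem \ref{thmwgtflagsD}. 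Only one wording slip: the factor $t^k M_k\prod_{j=1}^k(1-t^j)^{-1}$ enumerates weighted flags inside the chosen top space $V_k$ viewed as an abstract $\mathbb F_q^k$ (only the top space of an even flag carries a parity constraint), not flags inside the metabolizer $\mathcal I$.
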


\begin{rem}\label{remtwoclassesD} The parity condition in the type D case
is due to the existence of two orbits (defined by the 
parity condition) of maximal isotropic subspaces.
\end{rem}

\section{Modifications for exceptional types}\label{sectalgapproachforexctypes}

We denote by $G$ a simple group of Lie type over an algebraically closed field (say over $\mathbb C$ for simplicity).
Borel subgroups are maximal connected solvable subgroups of $G$.
We fix a Borel subgroup $B_0$ and a maximal 
torus $T_0$ contained in $B_0$.
The Weyl group $W$ of $G$ is the finite quotient 
group $N_0/T_0$ with $N_0$ denoting
the normalizer of $T_0$ in $G$. We get thus a Bruhat decomposition 
$G=\bigcup_{w\in W}B_0wB_0$ with $W$ denoting the Weyl group
of $G$, represented by suitable elements of $N_0$. 
A cell $B_0wB_0/B_0$ can
be identified with an affine space whose points index all associated Borel 
subgroups (given by conjugates of $B_0$ by elements of $B_0wB_0$). The dimension of such a cell $B_0wB_0/B_0$
is given by the natural length 
$l^W(w)$ of $w$ with respect to the standard generators associated 
to simple roots.
To each Borel
group $B$ in a cell $B_0wB_0$, we associate the parabolic
subgroup $P_B$ of $G$ generated by $B$ and by representants in $N_0$
of all standard generators $g\in W$ such that $l(wg)=l(w)+1$.
In accordance with our previous terminology we call such a parabolic 
group $P_B$ standard. (Warning: This is not the usual meaning of
``standard'' in the theory of Lie groups.) Its structure depends only
on the cell $B_0wB_0$ under consideration. 
The standard parabolic group associated to $B_0$ is $G$. In the opposite
direction, standard parabolics associated to $B_0wB_0$ for $w$ the Coxeter 
element are simply Borel groups of $B_0wB_0$.

Parabolic subgroups play the role of partial flags and
standard parabolic groups play the role of standard flags. 
The standard weight of a standard parabolic subgroup $P_B$ is defined as the 
sum of dimensions over all non-trivial invariant subspaces
of $\rho(P_B)$ with $\rho$ denoting the adjoint representation.
(Warning: Weights of flags and weights of representations are
of course unrelated.)

An arbitrary parabolic subgroup $P'$ contained in a standard parabolic
group $P_B$ is generated by $B$ and by a subset $\mathcal G'_B\subset \mathcal G_B$ with $B$ and $\mathcal G_B\subset N_0$ (representating a subset of 
standard generators of $W$) generating the standard parabolic subgroup 
$P_B$. A standard parabolic subgroup $P_B$ contains thus $2^{\sharp(\mathcal G_B)}$
different parabolic subgroups containing $B$ 
(corresponding to all flags refining the standard flag associated
to the standard parabolic subgroup $P_B$). 
Weights associated to a parabolic subgroup $P'$ are defined by sequences of strictly positive integers
indexed by invariant subspaces of $\rho(P')$ where $\rho$ denotes
the adjoint representation of $G$.
(The associated weight is then given by $\sum_{i}w_i\dim(V_i)$
where the sum is over all invariant subspaces $V_i$ of $\rho(P')$
with associated weight $w_i\in\{1,2,\dots\}$.)

Working out the technical details of this machinery
turns the determination of the Weyl-Mahonian statistics on the six
exceptional Weyl groups of type $E,F,G$ (for the flags of which there exists 
to my knowledge no obvious easy combinatorial description) 
into finite computations.

\section{Complements for weighted flags}\label{sectdefflags}

The \emph{empty flag} is reduced to the trivial (and omitted) subspace
$V_0=\{0\}$. It is the unique flag of weight $0$ as a weighted flag.

A flag $V_1\subset V_2\subset\dots\subset V_k$ of a finite-dimensional 
vector space $V$ is \emph{complete}
if $k=\dim(V)$ (and thus $\dim(V_i)=i$ for $i=0,\dots,\dim(V)$).
A flag is \emph{maximal} if $\dim(V_k)=\dim(V)$ and \emph{nonmaximal} 
otherwise. Complete flags are maximal.

Every flag $F=(V_1\subset \dots\subset V_k)$ can be endowed with the 
\emph{minimal weight} $\sum_{i=1}^k \dim(V_i)$ defined by the weight-sequence 
$w_1=w_2=\dots=w_k=1$. Any other weight sequence yields a 
strictly larger weight for $F$.

A weighted flag $(V_1\subset \dots \subset V_k;w_1,\dots,w_k)$
of $V$ defines a function $\mu:V\longrightarrow \mathbb N$
by setting $\mu(v)=\sum_{i,v\in V_i}w_i$. We call the value 
$\mu(v)$ the \emph{(weighted) multiplicity} of $v$.
We have of course $\mu(v)=0$
if and only if $v$ is in the complement of $V_k$.

A flag $F=(V_1\subset V_2\subset \dots\subset V_k)$ of a $d$-dimension vector
space $V$ can be encoded by a basis $f_1,\dots,f_d$ such 
that $f_1,f_2,\dots,f_{\dim(V_i)}$ span $V_i$ for all $i$ and by the 
strictly increasing sequence $0<\dim(V_1)<\dim(V_2)<\dots<\dim(V_k)$
of dimensions. If $F$ is weighted with weight sequence $w_1,\dots,
w_k$, the decreasing sequence $\mu(f_1)\geq \mu(f_2)\geq\dots\geq \mu(f_k)$
of multiplicities of basis vectors as above
defines the conjugate partition of the weight partition and we have thus
$$\sum_{i=1}^kw_i\dim(V_i)=\sum_{i=1}^d\mu(f_i)\ .$$
We call the partition defined by $\mu(f_1),\dots,\mu(f_{\dim(V_k)})$
the \emph{conjugate (weight) partition}.
A weighted flag is of course also uniquely defined by 
a basis $f_1,\dots,f_d$ as above and by the conjugate partition
$\mu(f_1),\dots,\mu(f_d)$ involving at most $d$ non-zero parts.

\section{$q$-binomials and proofs of Corollary \ref{correc} and Proposition \ref{propevqeqoneofMd}}

The following lore is easy and well-known (see for example Proposition 
1.3.18 of \cite{St86}):

\begin{prop}\label{propGrassm} The number of $k$-dimensional subspaces of $\mathbb F_q^d$
is given by the $q$-binomial
$${d\choose k}_q=\prod_{j=1}^k \frac{q^{d+1-j}-1}{q^j-1}\ .$$
\end{prop}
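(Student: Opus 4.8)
The plan is to count ordered tuples of linearly independent vectors in $\mathbb{F}_q^d$ in two complementary ways and then take a quotient. First I would count the number $N$ of ordered $k$-tuples $(v_1,\dots,v_k)$ of linearly independent vectors. At the $i$-th step the vector $v_i$ may be chosen freely subject only to the constraint that it avoid the span of $v_1,\dots,v_{i-1}$, which is an $(i-1)$-dimensional subspace containing exactly $q^{i-1}$ vectors; hence there are $q^d-q^{i-1}$ admissible choices, giving
$$N=\prod_{i=0}^{k-1}(q^d-q^i).$$

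Next, each such tuple is an ordered basis of the $k$-dimensional subspace $W$ it spans, and every ordered basis of a given $k$-dimensional subspace arises in this way. The one conceptual point is that the number of ordered bases of $W$ depends only on $\dim(W)=k$ and not on $W$ itself: running the same counting argument inside $W\cong\mathbb{F}_q^k$ shows that $W$ has exactly $\prod_{i=0}^{k-1}(q^k-q^i)$ ordered bases. Thus the map sending a tuple to its span has fibers all of this common cardinality, and the number of $k$-dimensional subspaces is the quotient
$${d\choose k}_q=\frac{\prod_{i=0}^{k-1}(q^d-q^i)}{\prod_{i=0}^{k-1}(q^k-q^i)}\ .$$

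Finally I would simplify to reach the stated form. Factoring $q^i$ out of the $i$-th factor produces the common power $q^{\binom{k}{2}}=\prod_{i=0}^{k-1}q^i$ in both numerator and denominator, which cancels and leaves
$$\frac{\prod_{i=0}^{k-1}(q^{d-i}-1)}{\prod_{i=0}^{k-1}(q^{k-i}-1)}\ .$$
Reindexing by $j=i+1$ turns the numerator into $\prod_{j=1}^k(q^{d+1-j}-1)$ and the denominator into $\prod_{j=1}^k(q^{k+1-j}-1)=\prod_{j=1}^k(q^j-1)$, which is exactly the claimed product. The argument is essentially routine and there is no genuine obstacle; the only things requiring care are the observation that all fibers of the span map have equal size (justifying the division) and the bookkeeping in the final reindexing step.
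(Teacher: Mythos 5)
Your proof is correct, and it is the standard double-counting argument (ordered independent $k$-tuples divided by the number of ordered bases of a $k$-dimensional subspace); the paper does not prove this proposition at all, dismissing it as well-known lore and citing Proposition 1.3.18 of Stanley's \emph{Enumerative Combinatorics, Vol.\ 1}, where essentially your argument appears. One pedantic point: since the paper defines ${d\choose k}_q$ \emph{recursively} by ${d\choose i}_q={d-1\choose i-1}_q+q^i{d-1\choose i}_q$, a fully self-contained proof should also observe that your product $\prod_{j=1}^k (q^{d+1-j}-1)/(q^j-1)$ satisfies this recursion (a one-line algebraic check, or combinatorially by sorting the $k$-dimensional subspaces of $\mathbb F_q^d$ according to whether they lie in a fixed hyperplane), but this is routine and does not affect the substance of your argument.
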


\begin{proof}[Proof of Corollary \ref{correc}] We start by sorting weighted flags of $\mathbb F_q^d$ according to the dimension
$i=\dim(V_\omega)\in\{0,1,\dots,d-1\}$ of their last and largest subspace $V_\omega$.
By Proposition \ref{propGrassm} there are ${d\choose i}_q$ possibilities for the choice of $V_\omega$ and we get thus
the identity
\begin{align*}
\sum_{F\in \mathcal{WF}(\mathbb F_q^d)}t^{w(F)}&=\sum_{i=0}^d
t^i{d\choose i}_q\quad \sum_{F\in \mathcal{WF}(\mathbb F_q^i)}t^{w(F)}
\end{align*}
equivalent to
\begin{align*}
(1-t^d)\sum_{F\in \mathcal{WF}(\mathbb F_q^d)}t^{w(F)}&=\sum_{i=0}^{d-1}
t^i{d\choose i}_q\quad \sum_{F\in \mathcal{WF}(\mathbb F_q^i)}t^{w(F)}
\end{align*}
which is the generating series of all weighted non-maximal flags.

Applying Theorem \ref{mainthm} on both sides we get 
\begin{align*}
M_d\prod_{j=1}^{d-1}\frac{1}{1-t^j}&=\sum_{i=0}^{d-1}
t^i{d\choose i}_qM_i\prod_{j=1}^i\frac{1}{1-t^j}
\end{align*}
which yields the result multiplying both sides with
$\prod_{j=1}^{d-1} (1-t^j)$.
\end{proof}

\begin{proof}[Proof of Proposition \ref{propevqeqoneofMd}]
Equality holds trivially for $d=1$. Induction yields
for the evaluation $A_d$ of $M_d$ at $q=1$
\begin{align*}
A_d&=\sum_{i=0}^{d-1}t^i\left(\prod_{j=i+1}^{d-1}(1-t^j)\right){d\choose i}\prod_{j=1}^i\frac{1-t^j}{1-t}\\
&=\prod_{j=1}^{d-1}(1-t^j)\left(\left(\sum_{i=0}^d{d\choose i}\frac{t^i}{(1-t)^i}\right)-\frac{t^d}{(1-t)^d}\right)\\
&=\prod_{j=1}^{d-1}(1-t^j)\left(\left(1+\frac{t}{1-t}\right)^d-\frac{t^d}{(1-t)^d}\right)\\
&=\prod_{j=1}^d\frac{1-t^j}{1-t}\ .
\end{align*}
\end{proof}

%

\section{Canonical presentations of weighted flags}

We associate to a flag $F=(V_1\subset V_2\subset\dots\subset V_k)$ of $\mathbb F^d$
a uniquely defined \emph{canonical basis} 
$f_1,\dots,f_d$ of $\mathbb F^d$ such that 
$V_i$ is spanned by $f_1,\dots,f_{\dim(V_i)}$ for all $i$.

The \emph{canonical presentation}
of a weighted flag is given by its canonical basis
$f_1,\dots,f_d$ together with its conjugate weight-partition
$\mu(f_1),\dots,\mu(f_d)$.
Canonical presentations are given algorithmically and thus uniquely
determined.

The canonical basis $f_1,\dots,f_k$ is constructed algorithmically as follows:

If $V_k$ is a strict subspace of $\mathbb F^d$, we increase the flag
by adding a last space $V_{k+1}=\mathbb F^d$ with degenerate multiplicity 
$\mu(v)=0$ on elements in $V_{k+1}\setminus V_k$. 

The \emph{length} of a non-zero element $v=(x_1,\dots,x_j,0,\dots,0)$ 
in $\mathbb F^d$ is the largest integer $j$ corresponding to a non-zero 
coordinate $x_j\in\mathbb F\setminus\{0\}$.

Let $f_1$ be the unique element of smallest length 
and last coordinate $1$ in $V_1$. Set $\lambda(1)=\mathop{length}(f_1)$.
If $V_1$ is of dimension larger than $1$, define $f_2$ as the unique shortest
non-zero element of $V_1$ with last coordinate 
$1$ and coordinate $x_{\lambda(1)}=0$. 
If $V_1$ is one-dimensional, define $f_2$ similarly using $V_2$.
Set $\lambda(2)=\mathop{length}(f_2)$.

More generally, suppose $f_1,\dots,f_{i-1}$ with $i< d$ already 
constructed. Let $j$ be the smallest index such that $V_j$ is not
spanned by $f_1,\dots,f_{i-1}$.
The vector $f_i$ is defined as the unique shortest element in $\mathbb F^d$ of $V_j\setminus (\mathbb Ff_1+\dots+\mathbb Ff_{i-1})$
with last coordinate $1$ and with coordinates $x_{\lambda(1)}=
x_{\lambda(2)}=\dots=x_{\lambda(i-1)}=0$. We set $\lambda(i)=\mathop{length}(f_i)$.

This construction yields a basis $f_1,\dots,f_d$ of $\mathbb F^d$ such that
$V_i$ is spanned by $f_1,\dots,f_{\dim(V_i)}$ for all $i$.
We call $f_1,\dots,f_d$ the \emph{canonical basis}.

The sequence $\lambda(f_1),\lambda(f_2),\dots$ of lengths of the canonical
basis defines a permutation $i\longmapsto \lambda(f_i)$ of $\mathcal S_d$.
We $\lambda$ the \emph{length-permutation}.
The length-permutation is uniquely defined in 
terms of the canonical basis.

Observe that many flags share a common canonical basis, see
Remark \ref{remnuberofrefnmts} for more.

We will see in Proposition \ref{propweightedflagstandardflag} 
that weighted flags corresponding to a given standard basis of $\mathbb F^d$
are in one-to-one correspondence with 
partitions having at most $d$ parts.
Each such flag contributes one monomial to the factor
$\prod_{j=1}^d(1-t^j)^{-1}$ appearing in Formula(\ref{formofmainthm})
of Theorem \ref{mainthm}. 
The constant term $1$
corresponds to the standard flag, introduced and studied in the
next Section.

\begin{rem}
The construction of the canonical basis is equivalent to 
the Bruhat decomposition $BWB=\mathrm{SL}_d(\mathbb F)$
of special linear groups.
The length-permutation $\lambda$ belongs to the Weyl group 
$\mathcal S_d$ of $\mathrm{SL}_d(\mathbb F)$ and indexes the Schubert cell
$B\lambda B$ containing the matrix with rows $f_1,\dots, f_{d-1},
(-1)^{\mathrm{sign}(\lambda)}f_d$ (with $\mathrm{sign}(\lambda)$
denoting the signature of $\lambda$).
\end{rem}

\section{Standard flags}

A flag $F=(
V_1\subset \dots\subset V_k\subset \mathbb F^d)$ of $\mathbb F^d$ consisting 
of $k$ subspaces
is a \emph{standard flag} if $F$ is  a nonmaximal flag (contained in a
strict subspace $V_k$ of $\mathbb F^d$) associated to a length-permutation
with $k$ descents. (Recall that a descent is an index $i<d$ such that 
$\sigma(i)>\sigma(i+1)$.)
Since $\lambda(i)<\lambda(i+1)$ if $i\not\in\{\dim(V_1),\dots,\dim(V_k)\}$, 
standard flags involve the minimal
number of subspaces determining a given length-permutation.
Every canonical basis defines a unique standard flag 
defined by $\dim(V_i)=j$ if $j$ is the $i$-th descent of the 
corresponding length-permutation. 
Any abstract flag $F=U_1\subset \dots\subset U_l$ 
with canonical 
basis $f_1,\dots,f_d$ is a refinement of the standard flag $V_1\subset
\dots\subset V_k$ defined
by $f_1,\dots,f_d$ in the following sense: there exists a strictly 
increasing sequence $1\leq j_1<\dots<j_k\leq l$ such that $V_i=U_{j_i}$.
We denote by $st(F)$ the standard flag
associated to the canonical basis of a flag $F$.
In particular, every standard flag can be refined 
to a unique complete flag $(U_1\subset \dots\subset U_d)$
by definining $V_i$ as the span of 
the first $i$ elements $f_1,\dots,f_i$ of its canonical basis.
Standard flags are thus in one-to-one correspondence with 
complete flags. (Observe however that the definition of complete
flags is independent of the choice of a basis. Standard flags 
are defined in $\mathbb F^d$ with respect to the natural basis 
of $\mathbb F^d$. The map $F\longmapsto st(F)$ associating 
to a complete flag $F$ the standard flag $st(F)$ is thus 
only defined for complete flags of $\mathbb F^d$.)

\begin{rem}\label{remnuberofrefnmts}
A canonical basis $f_1,\dots,f_d$ with length permutation $\lambda$
having $k$ descents is the canonical basis of exactly $2^{d-k}$ 
different abstract flags: 
Each quotient space $V_i/V_{i-1}$ of the standard flag 
can indeed be refined in
$2^{\dim(V_i)-\dim(V_{i-1})-1}$ different ways (corresponding to 
the $2^{\dim(V_i)-\dim(V_{i-1})-1}$ different compositions of the
natural integer $\dim(V_i)-\dim(V_{i-1})$) into abstract flags 
having the same canonical basis. 
\end{rem}

The \emph{standard weight} of a standard flag $F=(V_1\subset \dots\subset V_k)$ 
is given by 
$$w_{\min}(F)=\sum_{i=1}^k\dim(V_i)\ .$$
It corresponds to the smallest 
possible weight-sequence 
$w(V_1)=\dots=w(V_k)=1$. The associated
weight-multiplicities are given by $\mu(f_j)=k+1-i$ if 
$\dim(V_{i-1})<j\leq \dim(V_i)$ and by $\mu(f_j)=0$ for $j>\dim(V_k)$.

The  standard weight of a standard flag
has the following combinatorial characterization:

\begin{prop}\label{propstandweight} The  standard weight $w_{\min}(F)$ of a standard flag
$F=(V_1\subset \dots\subset V_k)$ 
with length permutation $\lambda\in\mathcal S_d$
is given by the Major index
$$\mathop{maj}(\lambda)=\sum_{i,\lambda(i)>\lambda(i+1)}i$$
of $\lambda$.
\end{prop}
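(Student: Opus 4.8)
The plan is to unwind the two definitions involved---the standard weight $w_{\min}(F)$ and the Major index $\mathop{maj}(\lambda)$---and to observe that both reduce to the very same sum over the descent positions of the length-permutation $\lambda$. The argument is therefore essentially bookkeeping with the constructions of the previous two sections, and the only point demanding a moment of care is matching up two increasing enumerations.

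First I would recall that the standard weight corresponds to the minimal weight-sequence $w_1=\dots=w_k=1$, so that by definition
$$w_{\min}(F)=\sum_{i=1}^k \dim(V_i).$$
Next I would invoke the defining property of a standard flag: the flag $F=(V_1\subset\dots\subset V_k)$ is built so that $\dim(V_i)$ equals the position of the $i$-th descent of $\lambda$. Since $\lambda\in\mathcal S_d$ has exactly $k$ descents and the dimensions satisfy $\dim(V_1)<\dots<\dim(V_k)$, the assignment $i\longmapsto \dim(V_i)$ is precisely the increasing enumeration of the descent set $\{\,i : 1\leq i<d,\ \lambda(i)>\lambda(i+1)\,\}$. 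Hence the set $\{\dim(V_1),\dots,\dim(V_k)\}$ coincides, as a set, with the set of descent positions of $\lambda$.

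Comparing the two sums then finishes the proof at once:
$$w_{\min}(F)=\sum_{i=1}^k \dim(V_i)=\sum_{i,\ \lambda(i)>\lambda(i+1)} i=\mathop{maj}(\lambda).$$
There is no genuine obstacle here, as the identity is definitional; the one step worth stating explicitly is that the strictly increasing sequence $\dim(V_1)<\dots<\dim(V_k)$ is the increasing listing of the descents, which is immediate because both sequences are strictly increasing and indexed by the same range $1,\dots,k$. I would also remark that the nonmaximality of a standard flag, which forces $\dim(V_k)\leq d-1$, is exactly consistent with the descents of a permutation in $\mathcal S_d$ lying in $\{1,\dots,d-1\}$, so no index falls outside the admissible range.
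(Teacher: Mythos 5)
Your proof is correct, and it is precisely the ``obvious proof'' the paper leaves to the reader: since a standard flag is defined by $\dim(V_i)=j$ for $j$ the $i$-th descent of $\lambda$, the standard weight $\sum_{i=1}^k\dim(V_i)$ is definitionally the sum of descent positions, i.e.\ $\mathop{maj}(\lambda)$, exactly as the paper's worked example (with Major index $1+3+6+7=17$) illustrates.
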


We leave the obvious proof to the reader.

We illustrate the notion of a standard flag and of its minimal weight
by an example. We consider a canonical basis $f_1,\dots,f_9$ of $\mathbb F^9$
with length permutation 
$(\lambda(1),\lambda(2),\dots,\lambda(9))=(6,3,8,1,4,9,7,2,5)$
the permutation illustrating the Wikipedia entry ``Permutation''
at the time of writing. Dimensions $\dim(V_i)-\dim(V_{i-1})$ of 
quotient-spaces $V_i/V_{i-1}$ correspond to (non-final) maximal ascending runs
(maximal sets of consecutive integers on which $\lambda$ is increasing).
The ascending runs of $\lambda$ are
$$\begin{array}{c|c}
\hbox{run}&\lambda(\hbox{run})\\
\hline
1&6\\
2,3&3,8\\
4,5,6&1,4,9\\
7&7\\
8,9&2,5
\end{array}$$
and correspond to the composition
$1+2+3+1+2$ of $d=9$. Since the Major index
$\sum_{i,\sigma(i)>\sigma(i+1)}i$ of $\lambda$ is the sum 
of preimages (or indices) of 
maximal elements in non-final ascending runs (maximal sets of consecutive 
integers on which the permutation is increasing), 
the Major index of our example equals
$1+3+6+7=17$.
An associated standard flag is given by 
\begin{align*}
V_1&=\mathbb F f_1,\\
V_2&=\mathbb F f_1+\mathbb F f_2+\mathbb F f_3,\\
V_3&=\mathbb F f_1+\dots+\mathbb F f_6,\\
V_4&=\mathbb F f_1+\dots+\mathbb F f_7
\end{align*}
for a suitable basis $f_1,\dots,f_9$
with weight-multiplicities $\mu(f_1)=4,\mu(f_2)=\mu(f_3)=3,\mu(f_4)=\mu(f_5)=\mu(f_6)=2,\mu(f_7)=1,\mu(f_8)=\mu(f_9)=0$. 
The standard weight of this standard flag is given by
$4\cdot 1+3\cdot 2+2\cdot 3+1\cdot 1=17$ and is thus equal to the Major index 
of $\sigma$.

The observation that the 
standard weight of a standard flag is the smallest possible  weight
among all weighted flags sharing a given 
canonical basis can be refined into the following result:

\begin{prop}\label{propweightedflagstandardflag} Let $F=(U_1\subset \dots\subset U_l)$
be a weighted flag with associated standard flag $st(F)=(
V_1\subset \dots\subset V_k)$.
There exists a partition $\alpha_1,\dots,\alpha_d$
of $w_F(F)-w_{\min}(st(F))$ 
such that the weight-multiplicities $\mu_F(f_i)$ for $F$ are given by 
$$\mu_F(f_i)=\mu_{\min}(f_i)+\alpha_i$$
with $\mu_{\min}(f_i)$  defining the standard weight-multiplicities
of the associated standard flag $st(F)$.

The  weight of $F$ is given by
$$w_F(F)=w_{\min}(F)+\sum_{i=1}^d \alpha_i\ .$$
\end{prop}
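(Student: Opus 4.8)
The plan is to work entirely with the common canonical basis $f_1,\dots,f_d$ shared by $F$ and by $st(F)$, comparing the two multiplicity functions $\mu_F$ and $\mu_{\min}$ position by position. Since every subspace $U_m$ of $F$ is spanned by the initial segment $f_1,\dots,f_{\dim(U_m)}$ of the canonical basis, a basis vector $f_p$ lies in $U_m$ if and only if $p\le \dim(U_m)$; hence
\[
\mu_F(f_p)=\sum_{m:\ \dim(U_m)\ge p} w_m ,
\]
which is a weakly decreasing function of $p$ whose drop at a position $p$ equals $w_m\ge 1$ when $p=\dim(U_m)$ for some $m$, and equals $0$ otherwise. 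The explicit description of the standard weight-multiplicities given just before Proposition \ref{propstandweight} shows likewise that $\mu_{\min}(f_p)$ is weakly decreasing, dropping by exactly $1$ at each of the positions $\dim(V_1),\dots,\dim(V_k)$ and remaining constant elsewhere.

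First I would set $\alpha_p=\mu_F(f_p)-\mu_{\min}(f_p)$ and show that this sequence is a partition, i.e. that it consists of weakly decreasing nonnegative integers. Weak monotonicity amounts to showing that the drop of $\mu_F$ at each position $p$ dominates the drop of $\mu_{\min}$ at $p$. The drop of $\mu_{\min}$ is nonzero only at the positions $\dim(V_i)$, where it equals $1$; but since $F$ refines $st(F)$ the dimension $\dim(V_i)$ equals $\dim(U_m)$ for some $m$, so the drop of $\mu_F$ there is $w_m\ge 1$. At every other position the drop of $\mu_{\min}$ is $0$ and there is nothing to check. This is the heart of the argument and the only place where the refinement relation $\{V_1,\dots,V_k\}\subseteq\{U_1,\dots,U_l\}$ is used.

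For nonnegativity I would invoke that a standard flag is by definition nonmaximal, so $\dim(V_k)<d$ and therefore $\mu_{\min}(f_d)=0$; since $\mu_F(f_d)\ge 0$ we obtain $\alpha_d\ge 0$, and weak monotonicity then forces $\alpha_p\ge 0$ for all $p$. Finally, summing the pointwise identity $\mu_F(f_i)=\mu_{\min}(f_i)+\alpha_i$ over $i$ and using the relation $w(G)=\sum_{i=1}^d\mu_G(f_i)$ from Section \ref{sectdefflags} gives $\sum_{i=1}^d\alpha_i=w_F(F)-w_{\min}(st(F))$, which is simultaneously the assertion that $(\alpha_i)$ is a partition of that integer and the claimed weight formula. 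The main obstacle is really only the bookkeeping of the drop comparison; once the two monotone step functions are described through their jumps, everything reduces to the inclusion of dimension sets coming from the refinement.
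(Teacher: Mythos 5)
Your proof is correct and takes essentially the same approach as the paper: you set $\alpha_i=\mu_F(f_i)-\mu_{\min}(f_i)$, establish weak monotonicity by observing that $\mu_{\min}$ drops by exactly $1$ only at the positions $\dim(V_1),\dots,\dim(V_k)$ where the refinement relation forces $\mu_F$ to drop by some $w_m\geq 1$ (this direct jump comparison is just the contrapositive of the paper's argument by contradiction), and deduce nonnegativity from $\mu_{\min}(f_d)=0$ for the nonmaximal standard flag, exactly as the paper does. Your final summation making the weight formula explicit via $w(F)=\sum_{i=1}^d\mu_F(f_i)$ is a routine step the paper leaves implicit.
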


Proposition \ref{propweightedflagstandardflag} implies the following result:

\begin{cor}\label{corpartitionstandard} We have
$$\sum_{F\in{st}^{-1}(F_{st})}t^{w(F)}=
t^{w(F_{st})}\prod_{i=1}^d\frac{1}{1-t^i}$$
where $w(F_{st})$ is the standard
weight of a standard flag $F_{st}$ and where ${st}^{-1}(F_{st})$
is the set of all weighted flags with associated standard flag $F_{st}$.
\end{cor}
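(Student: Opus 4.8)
The plan is to derive the corollary as an immediate consequence of Proposition \ref{propweightedflagstandardflag}, which already attaches to each weighted flag $F$ with $st(F) = F_{st}$ a partition $\alpha_1, \dots, \alpha_d$ through $\mu_F(f_i) = \mu_{\min}(f_i) + \alpha_i$ and records the weight identity $w(F) = w(F_{st}) + \sum_{i=1}^d \alpha_i$. The essential point is to promote this assignment into a genuine bijection between $st^{-1}(F_{st})$ and the set of partitions with at most $d$ parts.

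First I would fix the canonical basis $f_1, \dots, f_d$ of $F_{st}$ and note that a weighted flag $F$ with $st(F) = F_{st}$ is completely encoded by its conjugate weight-partition $\mu_F(f_1) \geq \dots \geq \mu_F(f_d) \geq 0$; hence $\alpha_i = \mu_F(f_i) - \mu_{\min}(f_i)$ is uniquely determined, giving injectivity. To see that $\alpha$ is a genuine partition, I would use that $\mu_{\min}$ is flat on each block of $F_{st}$ and drops by exactly $1$ at each block boundary: within a block $\alpha_i - \alpha_{i+1}$ equals the (nonnegative) drop of $\mu_F$, and at a boundary it equals that drop minus $1$, which is still nonnegative because $st(F) = F_{st}$ forces $\mu_F$ to decrease strictly there. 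Combined with $\mu_F \geq \mu_{\min}$, this yields $\alpha_1 \geq \dots \geq \alpha_d \geq 0$.

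Next I would establish surjectivity: for an arbitrary partition $\alpha$ with at most $d$ parts, the sequence $\mu_{\min} + \alpha$ is weakly decreasing and drops strictly at every boundary of $F_{st}$, so the corresponding weighted flag refines $F_{st}$, shares its canonical basis, and therefore satisfies $st(F) = F_{st}$. With the bijection in hand, substituting $w(F) = w(F_{st}) + |\alpha|$ gives
\begin{align*}
\sum_{F \in st^{-1}(F_{st})} t^{w(F)} = t^{w(F_{st})} \sum_{\alpha} t^{|\alpha|} = t^{w(F_{st})} \prod_{i=1}^d \frac{1}{1-t^i},
\end{align*}
the final equality being the generating function for partitions with at most $d$ parts recalled after Theorem \ref{mainthm}.

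The only delicate point is the two-sided verification that $\mu_{\min} + \alpha$ is precisely the conjugate weight-partition of a flag whose standard flag is $F_{st}$; this rests entirely on the fact that $\mu_{\min}$ falls by exactly $1$ at each boundary and is constant within blocks, which controls where strict decreases of $\mu_F$ may occur.
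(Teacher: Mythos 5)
Your proposal is correct and follows essentially the same route as the paper: the paper's proof invokes Proposition \ref{propweightedflagstandardflag} for the map $F\mapsto(\alpha_1,\dots,\alpha_d)$ and disposes of the converse with the one-line ``easy observation'' that every partition $\alpha$ yields admissible weight-multiplicities $\mu_{\min}(f_i)+\alpha_i$, exactly the bijection you construct before summing the generating function $\prod_{i=1}^d(1-t^i)^{-1}$ for partitions with at most $d$ parts. Your write-up merely makes explicit what the paper leaves implicit — injectivity via the encoding of a weighted flag by its conjugate weight-partition on the fixed canonical basis, and surjectivity via the strict drops of $\mu_{\min}+\alpha$ at the block boundaries of $F_{st}$ — which is a faithful, slightly more detailed rendering of the same argument.
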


\begin{proof} This follows from the easy observation 
that any partition $\alpha_1,\dots,\alpha_d$ defines
a sequence of weight-multiplicities $w(f_i)=\alpha_i+w_{\min}(f_i)$
on the standard basis $f_1,\dots,f_d$.
\end{proof}

\begin{proof}[Proof of Proposition \ref{propweightedflagstandardflag}]
We have to show that the sequence $\alpha_1,\dots,\alpha_d$
defined by $\alpha_i=\mu_F(f_i)-\mu_{\min}(f_i)$ has decreasing non-negative 
values.
Suppose first that we have $\alpha_i<\alpha_{i+1}$.
Since $\mu_{\min}(f_i)-\mu_{\min}(f_{i+1})\leq 1$ and since $\mu_F(f_i)$ is
decreasing, the inequality $\alpha_i<\alpha_{i+1}$ is only possible
if $\mu_F(f_i)=\mu_F(f_{i+1})$ and $\mu_{\min}(f_i)=1+\mu_{\min}(f_{i+1})$.
This contradicts the fact that $F$ contains all subspaces of 
the associated standard flag $st(F)$. 

Non-negativity of $\alpha_i$ follows from $\alpha_1\geq \alpha_2\geq  
\dots\geq\alpha_d
=\mu_F(f_d)-\mu_{\min}(f_d)=\mu_F(f_d)-0\geq 0.$
\end{proof}

\section{Canonical bases with a given length-permutation}

\begin{prop}\label{propcanbaseslambda} 
The set $\mathcal B(\lambda)$
of all canonical bases of $\mathbb F^d$ with
length-permutation $\lambda\in\mathcal S_d$ is an
affine vector-space of dimension
$$\mathop{inv}(\lambda)=\sum_{i<j,\lambda(i)>\lambda(j)}1\ .$$
\end{prop}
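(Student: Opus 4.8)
The plan is to identify $\mathcal B(\lambda)$ explicitly as the solution set of a system of affine-linear equations in the entries of the matrix whose rows are $f_1,\dots,f_d$, and then simply to count the free entries. First I would record the characterization of canonical bases that follows directly from the algorithmic construction: writing $f_i=(x^{(i)}_1,\dots,x^{(i)}_d)$, a basis lies in $\mathcal B(\lambda)$ if and only if (a) $x^{(i)}_{\lambda(i)}=1$ and $x^{(i)}_p=0$ for all $p>\lambda(i)$, so that $\mathop{length}(f_i)=\lambda(i)$, and (b) $x^{(i)}_{\lambda(j)}=0$ for all $j<i$. The forward implication is immediate from the definition of the canonical basis.

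For the converse I would let $V_i=\mathbb F f_1+\dots+\mathbb F f_i$ be the complete flag spanned by such a matrix and verify that the algorithm, applied to this flag, returns precisely $f_1,\dots,f_d$ with length-permutation $\lambda$. The key point is that any $v=\sum_{m\le i}c_m f_m\in V_i$ satisfying the vanishing conditions $x_{\lambda(1)}=\dots=x_{\lambda(i-1)}=0$ forces $c_1=\dots=c_{i-1}=0$: reading the equations $x_{\lambda(l)}=0$ in the order $l=1,\dots,i-1$ and using (a)--(b) yields a unipotent lower-triangular linear system, since the coefficient of $c_l$ in the $l$-th equation is the pivot $1$ while $x^{(m)}_{\lambda(l)}=0$ whenever $m>l$. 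Hence $v$ is a scalar multiple of $f_i$, and normalising the last coordinate to $1$ singles out $f_i$ uniquely; in particular $f_i$ is automatically the \emph{shortest} such vector, matching the algorithm.

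Conditions (a) and (b) only prescribe that certain entries equal $0$ or $1$, so $\mathcal B(\lambda)$ is cut out by affine-linear equations and is an affine subspace of the matrix space $\mathbb F^{d\times d}$, of dimension equal to the number of unconstrained entries. No open genericity condition is needed: any matrix satisfying (a)--(b) already has linearly independent rows, because if $\sum c_i f_i=0$ with some $c_i\neq 0$, then the coordinate at column $\lambda(i^\ast)$, where $i^\ast=\arg\max_{c_i\neq 0}\lambda(i)$, equals $c_{i^\ast}\neq 0$, every other contributing row having its last nonzero entry strictly to the left of $\lambda(i^\ast)$.

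It then remains to count. In row $i$ the free entries occupy the positions $p<\lambda(i)$ that are not among the previous pivots $\lambda(1),\dots,\lambda(i-1)$. Since $\lambda$ is a bijection, each $p<\lambda(i)$ equals $\lambda(j)$ for a unique $j\neq i$, and such a position is free exactly when $j>i$; hence row $i$ contributes $|\{j>i:\lambda(j)<\lambda(i)\}|$ free entries. Summing gives
$$\sum_{i=1}^d|\{j>i:\lambda(j)<\lambda(i)\}|=|\{(i,j):i<j,\ \lambda(i)>\lambda(j)\}|=\mathop{inv}(\lambda),$$
which is the asserted dimension. The only genuinely delicate step is the converse in the second paragraph --- checking that the echelon conditions (a)--(b) really do reproduce the algorithmic canonical basis --- and this is exactly where the triangularity argument does the work.
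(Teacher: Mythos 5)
Your proof is correct and takes essentially the same approach as the paper: both identify $\mathcal B(\lambda)$ with the set of echelon matrices having a pivot $1$ in row $i$ at column $\lambda(i)$, zeros beyond the pivot and in the earlier pivot columns $\lambda(1),\dots,\lambda(i-1)$, and free entries elsewhere, and then count the free entries row by row as $|\{j>i:\lambda(j)<\lambda(i)\}|$, summing to $\mathop{inv}(\lambda)$ (the paper visualizes this count via the Rothe diagram). The one difference is that you explicitly verify the converse --- that the algorithm applied to the flag spanned by such a matrix reproduces it, via the unipotent triangular system, together with linear independence of the rows --- a point the paper's terser proof asserts implicitly; this is added rigor rather than a different method.
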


The crucial ingredients for proving Theorem \ref{mainthm} are
Corollary \ref{corpartitionstandard} and the following result:

\begin{cor}\label{corstflags} Denoting by $\mathcal{F}_{st}(\mathbb F^d)$
the set of all standard flags of $\mathbb F^d$ we have
$$\sum_{F\in\mathcal{F}_{st}(\mathbb F_q^d)}t^{w_{st}(F)}=
\sum_{\lambda\in \mathcal S^d}t^{\mathop{maj}(\lambda)}q^{\mathop{inv}(\lambda)}$$
where $w_{st}(F)$ denotes the standard weight of a standard flag $F$
and where $\mathop{maj}$ and 
$\mathop{inv}$ denote the Major index and the number of inversions 
of a permutation $\lambda$ in 
$\mathcal S_d$.
\end{cor}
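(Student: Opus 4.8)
The plan is to prove Corollary \ref{corstflags} by partitioning the set of standard flags according to their length-permutation and then counting the flags in each block using the affine-space structure provided by Proposition \ref{propcanbaseslambda}. Every standard flag $F$ has a well-defined canonical basis, hence a well-defined length-permutation $\lambda = \lambda(F) \in \mathcal S_d$, so the map $F \longmapsto \lambda(F)$ sorts $\mathcal{F}_{st}(\mathbb F_q^d)$ into fibers indexed by permutations. The key point is that a standard flag is completely and uniquely determined by its canonical basis: by the definition given in Section \ref{sectdefflags} (``Standard flags''), once the canonical basis $f_1,\dots,f_d$ is fixed, the subspaces $V_i$ of $st(F)$ are forced, with $\dim(V_i) = j$ precisely when $j$ is the $i$-th descent of $\lambda$. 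Therefore standard flags with length-permutation $\lambda$ are in bijection with canonical bases having length-permutation $\lambda$, and by Proposition \ref{propcanbaseslambda} the set $\mathcal B(\lambda)$ of such bases is an affine space of dimension $\mathop{inv}(\lambda)$ over $\mathbb F_q$. Over a finite field this affine space contains exactly $q^{\mathop{inv}(\lambda)}$ points.

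First I would establish the bijection between $st^{-1}$-fibers and canonical bases cleanly, so that the fiber over $\lambda$ has cardinality $q^{\mathop{inv}(\lambda)}$. Second I would invoke Proposition \ref{propstandweight}, which identifies the standard weight $w_{st}(F) = w_{\min}(F)$ of a standard flag with length-permutation $\lambda$ as exactly $\mathop{maj}(\lambda)$. Since every standard flag in the fiber over $\lambda$ shares the same length-permutation, they all carry the same standard weight $\mathop{maj}(\lambda)$. Assembling these two facts, the contribution of the entire fiber over $\lambda$ to the generating sum is
\begin{align*}
\sum_{F \in \mathcal{F}_{st}(\mathbb F_q^d),\, \lambda(F)=\lambda} t^{w_{st}(F)}
&= q^{\mathop{inv}(\lambda)}\, t^{\mathop{maj}(\lambda)}.
\end{align*}
Summing over all $\lambda \in \mathcal S_d$ and swapping the order of summation then yields the claimed identity
\begin{align*}
\sum_{F \in \mathcal{F}_{st}(\mathbb F_q^d)} t^{w_{st}(F)}
&= \sum_{\lambda \in \mathcal S_d} t^{\mathop{maj}(\lambda)}\, q^{\mathop{inv}(\lambda)}.
\end{align*}

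I expect the main obstacle to be purely bookkeeping rather than conceptual: one must be careful that the weight $w_{st}(F)$ appearing in the statement is indeed the minimal (standard) weight $w_{\min}(F)$ and not some other weight, and that the bijection between standard flags and canonical bases does not overcount. The subtlety is that many \emph{abstract} flags share a single canonical basis (Remark \ref{remnuberofrefnmts} records $2^{d-k}$ of them for a permutation with $k$ descents), so it is essential to use that among all these, exactly one is the standard flag — the one whose subspaces sit precisely at the descent positions of $\lambda$. Once this uniqueness is pinned down, the counting is immediate from Propositions \ref{propstandweight} and \ref{propcanbaseslambda}, and no nontrivial calculation remains. The corollary is thus essentially a repackaging of these two propositions together with the finite-field point count $\lvert \mathcal B(\lambda)(\mathbb F_q)\rvert = q^{\mathop{inv}(\lambda)}$.
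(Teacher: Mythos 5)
Your proposal is correct and follows essentially the same route as the paper, whose proof is a one-line invocation of Propositions \ref{propstandweight} and \ref{propcanbaseslambda} summed over $\mathcal S_d$; you have simply made explicit the bijection between standard flags and canonical bases (and the resulting fiber count $q^{\mathop{inv}(\lambda)}$) that the paper leaves implicit. Your attention to the uniqueness of the standard flag among the $2^{d-k}$ abstract flags sharing a canonical basis is exactly the right point to pin down, and nothing further is needed.
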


\begin{proof}[Proof of Corollary \ref{corstflags}]
We apply Propositions \ref{propstandweight} and
\ref{propcanbaseslambda} to the 
sum over all elements of $\mathcal S_d$.
\end{proof}

\begin{proof}[Proof of Proposition \ref{propcanbaseslambda}] 
We construct the set $\mathcal B(\lambda)$ of all canonical 
bases of $\mathbb F^d$ with length-permutation
$\lambda$ in $\mathcal S_d$.

The first element $f_1=(x_{1,1},x_{1,2},\dots,x_{1,\lambda(1)-1},1,0,\dots,0)$ 
has $\lambda(1)-1$ arbitrary coefficients
followed by a coefficient $x_{1,\lambda(1)}=1$. Coefficients with indices
larger than $\lambda(1)$ are $0$.
More generally, the $i$-th basis element $f_i$ of a canonical basis
has prescribed coeffients $x_{i,\lambda(j)}=0$ for $j=1,\dots,i-1$,
$x_{i,\lambda(i)}=1$, $x_{i,j}=0$ for $j>\lambda(i)$. All remaining
coefficients are free. Such coefficients are in bijection with $j>i$
such that $\lambda(j)<\lambda(i)$. Their number is thus the number
of inversions 
$i<j, \ \lambda(i)>\lambda(j)$ involving $i$ as their smaller argument.
This shows that there are $q^{\mathop{inv}(\lambda)}$ canonical bases of
$\mathbb F_q^d$ with length-permutation a given element $\lambda$
in $\mathcal S_d$.
\end{proof}

The \emph{Rothe-diagram} of $\lambda$ visualizes the
nature of the coefficients occuring in a canonical basis of 
the set $\mathcal B(\lambda)$ constructed while 
proving Proposition \ref{propcanbaseslambda}: Bullets $\bullet$ 
represent coefficients $1$, crosses $\times$
represent free coefficients and blanks represent coefficients
which are necessarily $0$.

For example, the Rothe diagram of permutation $(\lambda(1),\lambda(2),\dots,\lambda(9))=(6,3,8,1,4,9,7,2,5)$
of $\mathcal S_9$ 
is given by
$$\begin{array}{c||c|c|c|c|c|c|c|c|c|}
i\backslash \lambda(i)&1&2&3&4&5&6&7&8&9\\
\hline\hline
1&\times&\times&\times&\times&\times&\bullet&&&\\
\hline
2&\times&\times&\bullet&&&&&&\\
\hline
3&\times&\times&&\times&\times&&\times&\bullet&\\
\hline
4&\bullet&&&&&&&&\\
\hline
5&&\times&&\bullet&&&&&\\
\hline
6&&\times&&&\times&&\times&&\bullet\\
\hline
7&&\times&&&\times&&\bullet&&\\
\hline
8&&\bullet&&&&&&&\\
\hline
9&&&&&\bullet&&&&
\end{array},$$
cf. the section \lq\lq Numbering permutations\rq\rq of the Wikipedia
entry \lq\lq Permutation\rq\rq in \cite{Wi}.
Its 18 crosses correspond to the $18$ inversions giving the length of 
$\lambda$. Each cross has indeed exactly one bullet at its right
and one bullet below it. The lines $i<j$ indexing these two bullets
define an inversion $\lambda(i)>\lambda(j)$ of $\lambda$.

\section{Type C: Symplectic flags}

A symplectic space $V$ is always of even dimension $2d$ and has a 
basis $e_1,\dots,e_d,f_1,\dots,f_d$ such that $\omega(e_i,f_i)=-\omega(f_i,e_i)=1$ and $\omega(e_i,e_j)=\omega(f_i,f_j)=\omega(e_i,f_j)=0$ if $i\not= j$.
We denote a symplectic space over a field $\mathbb F$ 
henceforth by $(\mathbb F^{2d},\omega)$ or $\mathbb F^{2d}$ for short.
A subspace $W$ of $V$ is \emph{isotropic} if the restriction of $\omega$
to $W\times W$ is identically zero. \emph{Lagrangians} in 
$(\mathbb F^{2d},\omega)$ are maximally
isotropic subspaces and are of dimension $d$.

A symplectic flag of $(\mathbb F^{2d},\omega)$ is 
a flag $V_1\subset V_2\subset \dots\subset V_k$ of $\mathbb F^{2d}$ which is 
contained in a Lagrangian of $\mathbb F^{2d}$.
The symplectic form $\omega$ restricts thus 
to $0$ on $V_k\times V_k$ and $V_k$ is of dimension at most $d$.
A symplectic flag is \emph{maximal} if $\dim(V_k)=d$ and
\emph{complete} if $k=d$. Complete
symplectic flags are maximal. 

Weighted flags and weights of flags are defined in 
the obvious way.
We denote by $\mathcal{WF}(\mathbb F^{2d},\omega)$ 
the set of all weighted symplectic
flags of the symplectic space $(\mathbb F^{2d},\omega)$.

\subsection{Proof of Corollary \ref{corformulaMdpm}}

The proof of Corollary \ref{corformulaMdpm} is essentially identical 
to the proof of Corollary \ref{correc}. The main ingredient is the following
well-known result which can be seen as an analogue of $q-$binomial
coefficients:

\begin{lem}\label{lemistropBC} (i) Given an odd prime power $q$, the symplectic space 
$(\mathbb F_q^{2d},\omega)$ and a non-degenerate orthogonal space of
dimension $2d+1$ over $\mathbb F_q$ have both $q^{2d}$ isotropic elements.

(ii) Given an odd prime power $q$, the symplectic space 
$(\mathbb F_q^{2d},\omega)$ and a non-degenerate orthogonal space of
dimension $2d+1$ over $\mathbb F_q$ have both
\begin{align}\label{formisotrinsympl}
\prod_{j=0}^{k-1}\frac{q^{2d-j}-q^j}{q^k-q^j}
&=\prod_{j=0}^{k-1}\frac{1-q^{2d-2j}}{1-q^{k-j}}
\end{align}
isotropic subspaces of dimension $k$.
\end{lem}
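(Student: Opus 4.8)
The plan is to count, in both the symplectic space $(\mathbb F_q^{2d},\omega)$ and the odd-dimensional non-degenerate orthogonal space $(\mathbb F_q^{2d+1},Q)$, the ordered linearly independent isotropic frames spanning an isotropic $k$-subspace, and then to divide by the order $\prod_{j=0}^{k-1}(q^k-q^j)$ of $\mathrm{GL}_k(\mathbb F_q)$, that is, by the number of ordered bases of a fixed $k$-dimensional space. This is the same bookkeeping already used (implicitly) in Proposition \ref{propGrassm}, adapted to the isotropic setting.

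For part (i), the symplectic case is immediate: every vector $v$ satisfies $\omega(v,v)=0$, so all $q^{2d}$ vectors are isotropic. In the orthogonal case I would diagonalize $Q=\sum_{i=1}^{2d+1}a_ix_i^2$ and invoke the classical zero-count for quadratic forms over $\mathbb F_q$ with $q$ odd: a non-degenerate form in an odd number $n$ of variables has exactly $q^{n-1}$ zeros, independently of its discriminant. With $n=2d+1$ this yields $q^{2d}$ isotropic vectors (the zero vector included). This Gauss-sum computation is the only non-formal input, and it is the step I expect to be the main technical obstacle, although it is entirely standard; the crucial feature is that for odd $n$ the discriminant-dependent correction term vanishes, so the two geometries line up.

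For part (ii), let $W_i$ be an isotropic subspace of dimension $i$, so that $W_i\subseteq W_i^\perp$ with $\dim W_i^\perp=n-i$. Extending $W_i$ to an isotropic $(i+1)$-space amounts to choosing an isotropic vector in $W_i^\perp\setminus W_i$. The form induced on the quotient $W_i^\perp/W_i$ is again non-degenerate, symplectic of dimension $2d-2i$ in the first case and orthogonal of odd dimension $2(d-i)+1$ in the second; since $W_i$ is isotropic and contained in $W_i^\perp$, a vector of $W_i^\perp$ is isotropic if and only if its image in $W_i^\perp/W_i$ is. By part (i) the quotient has $q^{2d-2i}$ isotropic vectors, each lifting to $q^i$ vectors of $W_i^\perp$, giving $q^{2d-i}$ isotropic vectors in $W_i^\perp$; discarding the $q^i$ elements of $W_i$ leaves exactly $q^{2d-i}-q^i$ isotropic extensions. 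Both geometries therefore give the same count at every step.

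Multiplying these choices for $i=0,\dots,k-1$ produces $\prod_{j=0}^{k-1}(q^{2d-j}-q^j)$ ordered isotropic frames, identical for the symplectic and the odd orthogonal space, and dividing by $\prod_{j=0}^{k-1}(q^k-q^j)$ gives
$$\prod_{j=0}^{k-1}\frac{q^{2d-j}-q^j}{q^k-q^j}=\prod_{j=0}^{k-1}\frac{1-q^{2d-2j}}{1-q^{k-j}},$$
where the last equality follows by cancelling the common factor $q^j$ in numerator and denominator of each term. This is precisely formula (\ref{formisotrinsympl}), establishing (ii) and completing the proof.
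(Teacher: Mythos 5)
Your proposal is correct, and since the paper explicitly leaves this lemma to the reader, there is no competing argument to compare it with: the route you take (every vector is isotropic for $\omega$; the classical count of $q^{2d}$ zeros of a non-degenerate quadratic form in an odd number of variables over $\mathbb F_q$ with $q$ odd; the inductive step giving $q^{2d-i}-q^i$ isotropic extensions via the non-degenerate form induced on $W_i^{\perp}/W_i$; division by the $\prod_{j=0}^{k-1}(q^k-q^j)$ ordered bases of a $k$-dimensional space) is the standard proof the author surely intends. The delicate points are all in order — the induced form on $W_i^{\perp}/W_i$ is well defined and non-degenerate because $Q(w+u)=Q(w)$ for $w\in W_i^{\perp}$, $u\in W_i$, and because the radical of the restricted bilinear form is exactly $W_i$, while the ordered tuples you count are precisely the ordered bases of totally isotropic $k$-subspaces, so the final division is legitimate.
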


We leave the proof to the reader.

\begin{proof}[Proof of Corollary \ref{corformulaMdpm}] 
Assertion (ii) of Lemma \ref{lemistropBC} and Theorem \ref{mainthm} imply
$$\sum_{F\in\mathcal{WF}(\mathbb F_q^{2d},\omega)}t^{w(F)}=
\sum_{k=0}^dt^k\left(\prod_{j=0}^{k-1}\frac{q^{2d-2j}-1}{q^{k-j}-1}\right)
M_k\prod_{j=1}^k\frac{1}{1-t^j}\ .$$
Use of Theorem \ref{thmsympl}
and multiplication by $\prod_{j=1}^d(1-t^j)$ end the proof. 
\end{proof}

\subsection{Small values and a few properties for $M_d^\pm$}

The first few polynomials $M_d^\pm$ are as follows: 
$M^\pm_1=1+qt$, coefficients of $M^\pm_2,M^\pm_3$ are given by
$$
{\begin{array}{c|ccccccc}
&1&q&q^2&q^3&q^4\\
\hline
1&1\\
t&&1&1&1\\
t^2&&1&1&1\\
t^3&&&&&1\\
\end{array}}
\quad
{\begin{array}{c|cccccccccc}
&1&q&q^2&q^3&q^4&q^5&q^6&q^7&q^8&q^9\\
\hline
1&1\\
t&&1&1&1&1&1\\
t^2&&1&2&2&2&2&1&1\\
t^3&&1&1&3&2&2&3&1&1\\
t^4&&&1&1&2&2&2&2&1\\
t^5&&&&&1&1&1&1&1\\
t^6&&&&&&&&&&1\\
\end{array}}
$$
and coefficients of $M^\pm_4$ are given by
$${\begin{array}{c|ccccccccccccccccc}
&0&1&2&3&4&5&6&7&8&9&10&11&12&13&14&15&16\\
\hline
1&1\\
t&&1&1&1&1&1&1&1\\
t^2&&1&2&2&3&3&3&3&2&2&1&1\\
t^3&&1&2&4&4&6&6&6&6&5&4&2&2\\
t^4&&1&2&4&6&7&8&9&9&8&7&5&3&2&1\\
t^5&&&1&3&5&7&9&10&12&10&9&7&5&3&1\\
t^6&&&1&2&3&5&7&8&9&9&8&7&6&4&2&1\\
t^7&&&&&2&2&4&5&6&6&6&6&4&4&2&1\\
t^8&&&&&&1&1&2&2&3&3&3&3&2&2&1\\
t^9&&&&&&&&&&1&1&1&1&1&1&1\\
t^{10}&&&&&&&&&&&&&&&&&1\\
\end{array}}$$
(with columns yielding the coefficients of 
$1,q,\dots,q^{16}$).
Maximal degrees in $q$ and $t$ of  $M_d^\pm$
are due to the Coxeter element $c(i)=-i,\ i=1,\dots,d$
(which is the unique non-trivial central element in $\mathcal S_d^\pm$)
of maximal length ${d\choose 2}+{d+1\choose 2}=d^2$ 
and maximal flag-Major index ${d\choose 2}+d={d+1\choose 2}$
contributing the monomial $q^{d^2}t^{d+1\choose 2}$
of leading degree in $q$ and $t$ to $M_d^\pm$.
The easy identities $l(\sigma)+l(c\circ \sigma)=d^2$ and
$\mathop{Wmaj}(\sigma)+\mathop{Wmaj}(c\circ \sigma)={d+1\choose 2}$
imply the symmetry $t^{d\choose 2}q^{d^2}M^\pm_d(1/q,1/t)=M^\pm_d(q,t)$,
obvious in the above examples.

The polynomial $M_d^\pm$ evaluates to $2^dd!$ at $q=t=1$.

Corollary \ref{corformulaMdpm}
and Formula (\ref{formulalengthfactorSd}) yield the well-known
factorization 
$$M_d^\pm(q,1)=\sum_{\sigma\in\mathcal S_d^\pm}
q^{l(\sigma)}=\prod_{j=1}^d\frac{1-q^{2j}}{1-q}$$
(analogous to the factorization for $\mathcal S_d$ 
given by (\ref{formulalengthfactorSd}))
for the generating polynomial of lengths in the 
hyperoctahedral group $\mathcal S_d^\pm$ (obtained by evaluating
$M_d^\pm$ at $t=1$). 

A similar computation (again using Corollary \ref{corformulaMdpm}
and Formula (\ref{formulalengthfactorSd}))
yields the factorization
$$M^\pm_d(1,t)=(1+t)^dM_d(1,t)=(t+1)^d\prod_{j=1}^d\frac{t^j-1}{t-1}$$
at $q=1$.

The easy congruence
$${d\choose i}_q\equiv \prod_{j=0}^{i-1}\frac{1-q^{2d-2j}}{1-q^{i-j}}
\pmod{q^{d+1-i}}$$
implies that $M_d$ and $M_d^{\pm}$ have identical coefficients of 
total degree $\deg_t+\deg_q$ at most $d$.
We have thus
$\lim_{d\rightarrow\infty}M_d(q,q)=\lim_{d\rightarrow\infty}M_d^\pm(q,q)
\in \mathbb Z[[q]]$ for coefficient-wise convergency.

\section{The length-function of $\mathcal S_d^\pm$}

We remind the reader that a pair $i<j$ such that
$\sigma(i)>_\pm \sigma(j)$ (with $<_\pm$ defined by Formula (\ref{pmorderonZ}))
is an inversion of an element $\sigma$ in the hyperoctahedral group $S_d^\pm$.
(Readers annoyed by the order relation $<_\pm$ can replace
the condition $\sigma(i)>_\pm \sigma(j)$ by the equivalent condition 
$\sigma(i)\sigma(j)(\sigma(i)-\sigma(j))>0$.)

Inversions $i<j$ can be classified by their sign-pattern into three types:
$\sigma(i)>\sigma(j)>0,\sigma(i)<0<\sigma(j)$ and $0> \sigma(i)>\sigma(j)$
depicted graphically by
$$\begin{array}{cc}
\bullet&\\
&\bullet\\
\hline
\end{array}\quad,\qquad 
\begin{array}{cc}
&\bullet\\
\hline
\bullet&\\
\end{array}\hbox{ and }
\begin{array}{cc}
\hline
\bullet&\\
&\bullet\\
\end{array}$$
using hopefully self-explanatory notations.

Non-inversions $i<j$ are similarly classified into
$0<\sigma(i)<\sigma(j),\sigma(i)>0>\sigma(j)$ and $\sigma(i)<\sigma(j)<0$
represented by
$$\begin{array}{cc}
&\bullet\\
\bullet&\\
\hline
\end{array}\quad,\qquad 
\begin{array}{cc}
\bullet&\\
\hline
&\bullet\\
\end{array}\hbox{ and }
\begin{array}{cc}
\hline
&\bullet\\
\bullet&\\
\end{array}$$

We call the contribution
$\sum_{0<i,\sigma(i)<0}(d+1+\sigma(i))$ to $l(\sigma)$ (given by Formula
(\ref{defsignedlength})) the \emph{sign-part} of the length.

We denote the ordinary transpositions $(i,i+1)$ of $M_d^\pm$ 
by $s_i$ for $i=1,\dots,d-1$ and we denote by
$s_d$ the sign change of the last coordinate (exchanging $d$ and
$-d$).

\begin{proof}[Proof of Proposition \ref{propsignedlength}] 
For the sake of concision, we denote by $l$ the function of 
Proposition \ref{propsignedlength} defined by 
(\ref{defsignedlength}).
The length $0$ of the identity permutation $\sigma(i)=i$ for $i\in\{1,\dots,d\}$
is obviously given by $l(\sigma)=0$.

If $i<d$, the map $\sigma\longmapsto \sigma\circ s_i$ does not affect 
the sign-part of $\sigma$ in $\mathcal S_d^\pm$. Moreover,
since $i<i+1$ is an inversion of $\sigma$ if and only if it is a non-inversion
of $\sigma\circ s_i$ and vice-versa, 
the number of inversions of 
$\sigma\circ s_i $ and of $\sigma$ differ 
exactly by $1$ for $i=1,\dots,d-1$. 
This implies 
$\vert l(\sigma)-l(\sigma\circ s_i)\vert=1$ for $i<d$.

We prove now that this holds also for $i=d$, i.e. we have
$\vert l(\sigma)-l(\sigma\circ s_d)\vert=1$:
Up to replacing $\sigma$ with $\sigma\circ s_d$ we can 
suppose $\sigma(d)=a>0$.
We depict $\sigma$ schematically by the following representation
$$\begin{array}{r||cccc|c}
\sigma(j)&i_1&i_2&i_3&i_4&d\\
\hline\hline
&&&&\bullet_4&\\
a&&&&&\bullet\\
&&&\bullet_3&&\\
\hline
&&\bullet_2&&&\\
-a&&&&&\circ\\
&\bullet_1&&&&\\
\end{array}$$
(the horizontal line represents $0$, the vertical line 
separates the last index $d$ from previous ones,
the last value of $\sigma$, respectively of $\tilde \sigma=\sigma\circ s_d$, is represented by $\bullet$, respectively $\circ$).
We denote by $i_j$ indices taking values depicted by $\bullet_j$, i.e.
$$-d\leq\sigma(i_1)<-a<\sigma(i_2)<0<\sigma(i_3)<a<
\sigma(i_4)\leq d\ .$$
The following Table depicts the status with respect to inversions
(Yes for inversions, No for non-inversions)
of $\sigma$ and $\tilde \sigma=\sigma\circ s_d$
for $i_j<d$:
$$\begin{array}{c||c||c}
j&\sigma(i_j)>_\pm \sigma(d)&
\tilde\sigma(i_j)>_\pm \tilde\sigma(d)\\
\hline
1&\mathrm{Yes}&\mathrm{No}\\
2&\mathrm{Yes}&\mathrm{Yes}\\
3&\mathrm{No}&\mathrm{No}\\
4&\mathrm{Yes}&\mathrm{No}\\
\end{array}$$
Setting
$$\nu_j=\sharp\{i<d\vert i\text{ is of type }i_j\}$$
(where "type" means represented by $\bullet_j$)
we get now
$$\begin{array}{rcl}
l(\sigma\circ s_d)-l(\sigma)&=&d+1-a-\nu_1-\nu_4\\
&=&d+1-a-(d-a)\\
&=&1
\end{array}$$
using the trivial identity $\nu_1+\nu_4=d-a$. 
This shows the equality
\begin{align}\label{formsigmasdsigma}
\vert l(\sigma)-l(\sigma\circ s_d)\vert=1.
\end{align}

So far we have proven that $l(\sigma)$, given by Formula 
(\ref{defsignedlength})
of Proposition \ref{propsignedlength}, is at least equal 
to the length of $\sigma$ in terms of 
the generators $s_1,\dots,s_d$. Indeed, since composition with a 
generator changes the value of $l$ exactly by $1$ and since $l(\sigma)=0$
if $\sigma$ is the identity, at least $l(\sigma)$ generators are 
necessary for writing an arbitrary element $\sigma$ of $S_d^\pm$.

Equality holds if we show that 
for each $\sigma$ with strictly positive $l(\sigma)$ there exists 
a generator $s_i$ such that $l(\sigma\circ s_i)=l(\sigma)-1$.
We consider thus an arbitrary signed permutation $\sigma$ of 
$\mathcal S_d^\pm$.

If $\sigma(d)<0$, then 
$l(\sigma\circ s_d)=l(\sigma)-1$ as can be seen after interchanging
$\sigma,\sigma\circ s_d$ and applying the identity
(\ref{formsigmasdsigma}).

If $\sigma(d)>0$ then $\sigma$ is either the identity or there exists
a largest integer $i<d$ such that either $\sigma(i)<0<\sigma(i+1)$ or
$\sigma(i)>\sigma(i+1)>0$. In both cases, $i<i+1$ defines an inversion
of $\sigma$ but not of $\sigma\circ s_i$ and we have thus $l(\sigma\circ s_i)=
l(\sigma)-1$.
\end{proof}

\begin{rem}
The proof of Proposition \ref{propsignedlength} is algorithmic. 
Given an element $\sigma\not=\mathop{id}$ of $\mathcal S_d^\pm$,
we choose an index $i_1$ (for example as large as possible) such that 
$\sigma\circ s_{i_1}$ 
is of shorter length than $\sigma$ and we iterate
until arriving at the identity
$s\circ s_{i_1}\circ s_{i_2}\circ\cdots \circ s_{i_l}$
(with $l=l(\sigma)$ denoting the length of $\sigma$ given by 
Formula (\ref{defsignedlength}) in Proposition \ref{propsignedlength}).
This leads to a shortest expression
$$\sigma=s_{i_l}\circ s_{i_{l-1}}\circ \cdots\circ s_{i_1}$$
of $\sigma$ using $l=l(\sigma)$ (not necessarily different) generators $s_{i_j}$ 
belonging to the set $\{s_1=(1,2),s_2=(2,3),\dots,s_{d-1}=(d-1,d),s_d=(d,-d)\}$.
Every shortest expression with respect to the Coxeter generators $s_1,\dots,
s_d$ is obtained in this way.

Denoting a signed partition $\sigma$
by $(\sigma(1),\dots,\sigma(d))$, we get for example for $\sigma=(-2,-3,1)$
(of length $6$, it has $3$ inversions and a sign-part of $(4-2)+(4-3)=3$):
$$\begin{array}{r|c|c}
\sigma&(-2,-3,1)&6\\
\sigma\circ s_2&(-2,1,-3)&5\\
\sigma\circ s_2\circ s_3&(-2,1,3)&4\\
\sigma\circ s_2\circ s_3\circ s_1&(1,-2,3)&3\\
\sigma\circ s_2\circ s_3\circ s_1\circ s_2&(1,3,-2)&2\\
\sigma\circ s_2\circ s_3\circ s_1\circ s_2\circ s_3&(1,3,2)&1\\
\sigma\circ s_2\circ s_3\circ s_1\circ s_2\circ s_3\circ s_2&(1,2,3)&0
\end{array}$$
(with the last column indicating lengths) when choosing always largest possible
indices.
This gives for $\sigma=(-2,-3,1)$ the expression
$$\sigma=(s_2\circ s_3\circ s_1\circ s_2\circ s_3\circ s_2)^{-1}=
s_2\circ s_3\circ s_2\circ s_1\circ s_3\circ s_2$$
in terms of the generators $s_1=(2,1,3),s_2=(1,3,2),s_3=(1,2,-3)$.
\end{rem}

\section{The canonical half-basis of a symplectic flag}\label{secthalfbases}

We embed a flag $F=(V_1\subset \dots\subset V_k)$ of $\mathbb F^{2d}
=(\mathbb F^{2d},\omega)$ 
canonically in a Lagrangian (maximal isotropic subspace) $L(V_k)$
depending only on $V_k$ and we associate to such a flag a length-permutation
$\sigma\in\mathcal S_d^\pm$ and a canonical basis $f_1,\dots,f_d$ of $L(V_k)$
such that $f_1,\dots,f_{\dim(V_i)}$ span $V_i$ for $i=1,\dots,k$.

We identify the symplectic space $\mathbb F^{2d}$ with 
$\oplus_{i=1}^d(\mathbb Fb_i+\mathbb F b_{-i}),i\in\{1,\dots,d\}$,
endowed with the symplectic form
$\omega(b_i,b_j)=0$ if $i+j\not=0$,
$\omega(b_i,b_{-i})=-\omega(b_{-i},b_i)=1$ for $i=1,\dots,d$.

We order indices of
coordinates $x_i$ of an element $\sum_{i=1}^d(x_ib_i+x_{-i}b_{-i})$ in 
$\mathbb F^{2d}$ always increasingly with respect to the order relation
$<_\pm$ giving the order relation
$$1<_\pm 2<_\pm 3<_\pm \dots <_\pm -3<_\pm -2<_\pm -1$$
of (\ref{pmorderonZ}). We write thus elements of $\mathbb F^{2d}$
in the form
\begin{align}\label{symplindices}
(x_1,x_2,\dots,x_{d-1},x_d,x_{-d},x_{-d+1},\dots,x_{-2},x_{-1})\ .
\end{align}
We use this order also on indices of the standard basis of 
$(\mathbb F^{2d},\omega)$ writing it in the form 
$$b_1,b_2,b_3,\dots,b_{d-1},b_d,b_{-d},b_{1-d},b_{2-d},\dots,b_{-2},b_{-1}\ .$$

We call the position of the index $i$ with respect to the
order $<_\pm$ the  \emph{length} of a basis element $b_i$.
More precisely, the length $\lambda(b_i)$ of $b_i\in\mathbb F^{2d}$ is given by
$$\lambda(b_i)=\left\lbrace\begin{array}{ccl}
i&&\hbox{if }i>0,\\
2d+1-i&&\hbox{if }i<0.
\end{array}\right.$$
The length $\lambda(\mathbf x)$
of $\mathbf x=(x_1,\dots,x_d,x_{-d},\dots,x_{-1})\in\mathbb F^{2d}$ 
is the maximal length 
of a basis-vector involved with non-zero coefficient. The length 
of an element depends crucially on the dimension $2d$ of the surrounding
symplectic space as shown by the following example: 
The element $3b_2+b_3-5b_{-3}$ of $(\mathbb F^6,\omega)$ with coordinate-vector 
$(0,3,1,-5,0,0)$ has length $4$.
The same element  $3b_2+b_3-5b_{-3}$ has length $6$ 
when considered as an element of $(\mathbb F^8,\omega)$
since its coordinate-vector is then given by $(0,3,1,0,0-5,0,0)$. 

Let $F=(V_1\subset V_2\subset \dots\subset
V_k)$ be a symplectic flag of $\mathbb F^{2d}$.
We associate to $F$ a basis $f_1,\dots,f_d$ of a 
canonically defined maximal Lagrangian 
$L$ containing $V_k$ and a length-permutation $\sigma\in 
\mathcal S_d^\pm$ as follows:
$f_1$ is the element of minimal length in $V_1\setminus\{0\}$
with last non-zero coordinate $1$.
We define $\sigma(1)$ as the unique element of $\{\pm 1,\dots,\pm d\}$ such 
that $f_1$ and $b_{\sigma(1)}$ have the same length.
More generally, given $f_1,\dots, f_i$ (with $i<d$) we define 
$f_{i+1}$ as follows: 
If $\dim(V_k)>i$, let $j\leq k$ be the smallest index such that
$\dim(V_j)>i$ and let $g_{i+1}$ be the shortest non-zero element
all whose non-zero coordinates have indices in 
$\{\pm 1,\dots,\pm d\}\setminus\{\pm\sigma(1),\dots,\pm \sigma(i)\}$,
which has a last non-zero coordinate with coefficient $1$ 
and which is such that there exist constants
$\lambda_1,\dots,\lambda_i$ in $\mathbb F$ with 
$g_{i+1}+\sum_{k=1}^i\lambda_kb_{-\sigma(k)}\in V_j\setminus\{\oplus_{k=1}^i\mathbb Ff_k\}$.
Set $f_{i+1}=g_{i+1}+\sum_{k=1}^i\lambda_kb_{-\sigma(k)}$
and define $\sigma(i+1)$ as the unique integer of 
$\{\pm 1,\dots,\pm d\}\setminus\{\pm \sigma(1),\dots,\pm \sigma(i)\}$
such that $g_{i+1}$ and $b_{\sigma(i+1)}$ have the same length.
If $\dim(V_k)\leq i<d$ proceed similarly by considering $g_{i+1}$
in $\mathbb F^{2d}\setminus \left(\oplus_{j=1}^i \mathbb F f_i\right)$
and define the constants $\lambda_1,\dots,\lambda_i$ in order
to get orthogonality (with respect to $\omega$) 
of $f_{i+1}=g_{i+1}+\sum_{k=1}^i\lambda_kb_{-\sigma(k)}$ with $f_1,\dots,f_i$.

In particular $\sigma(d)<0$ implies $\dim(V_k)=d$ 
(and the flag $F$ is maximal).

An important point of this construction is the observation that 
the generally non-zero coordinates 
$x_{-\sigma(1)},x_{-\sigma(2)},\dots,x_{-\sigma(i)}$ of 
$f_{i+1}$ are not involved in the ``length'' of $g_{i+1}$ 
defining $\sigma(i+1)$. These coordinates are only used for forcing 
the orthogonality relations
$\omega(f_{i+1},f_1)=\dots=\omega(f_{i+1},f_i)=0$. 

Observe that the length-permutation $\sigma$ belongs to the 
subgroup $\mathcal S_d$ of ordinary permutations in $\mathcal S_d^\pm$
if and only if the flag $F$ is contained in the maximal Lagrangian
generated by $b_1,\dots,b_d$. The construction of the
canonical basis $f_1,\dots,f_d$ coincides then with the construction of 
the canonical basis of $F$ considered as an ordinary (non-symplectic)
flag of $\mathbb F^d=\mathbb F b_1+\dots+\mathbb F b_d$.

Observe also that the lengths 
of $b_{\sigma(i)}$ and $b_{\sigma(i+1)}$
are increasing if $f_i,f_{i+1}\in V_j\setminus V_{j-1}$ for some $j$.
Two such consecutive integers contribute however a descent to the flag-Major index
of $\sigma$ if $\sigma(i)>0>\sigma(i+1)$.

We call a maximal flag $F=(V_1,\dots,V_k)$ of $\mathbb F^{2d}$ \emph{standard}
if $\sigma(d)<0$ and 
if $b_{\sigma(\dim(V_i))}$ is longer than $b_{\sigma(\dim(V_i)+1)}$ whenever $i<k$.
A non-maximal flag $F=(V_1,\dots,V_k)$ is \emph{standard}
if $b_{\sigma(\dim(V_i))}$ is longer than $b_{\dim(V_i)+1}$ for $i\leq k$
(recall that a flag is maximal if and only if $V_k$ is $d$-dimensional).

Every symplectic flag $V_1\subset \dots \subset V_k$
contains a unique standard subflag $V_{i_1}\subset\dots\subset V_{i_j}$ (for
some integer $j\leq k$ and for $i_1<i_2<\dots<i_j$ a subset of $\{1,\dots,j\}$)
with the same length-permutation.

The \emph{ standard weight} of a standard flag $F=(V_1\subset \dots\subset V_k)$ is given in the usual way by
$\sum_{i=1}^k \dim(V_i)$ and is the minimal  weight 
for $F$ considered as a weighted flag.

Theorem \ref{thmsympl} will be an easy consequence of the following 
generalization of Theorem \ref{corstflags}:

\begin{thm}\label{thmstandsympl}
We have
$$\sum_{F\in\mathcal{F}_{st}(\mathbb F_q^{2d},\omega)}t^{w_{st}(F)}=
\sum_{\sigma\in\mathcal S_d^{\pm}}q^{l^\pm(\sigma)}t^{\mathop{Wmaj}(\sigma)}$$
where $\mathcal{F}_{st}(\mathbb F_q^{2d},\omega)$
denotes the set of all standard flags of $(\mathbb F_q^{2d},\omega)$ and where
$w_{st}(F)$ is the standard weight of a standard flag $F$.
\end{thm}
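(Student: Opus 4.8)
The plan is to follow the architecture of the type A proof of Corollary \ref{corstflags}, partitioning the set $\mathcal{F}_{st}(\mathbb F_q^{2d},\omega)$ of standard symplectic flags according to the length-permutation $\sigma\in\mathcal S_d^\pm$ produced by the canonical half-basis construction of Section \ref{secthalfbases}. Just as Corollary \ref{corstflags} rested on Proposition \ref{propstandweight} (standard weight $=\mathop{maj}$) and Proposition \ref{propcanbaseslambda} (the fibre over $\lambda$ is an affine space of dimension $\mathop{inv}(\lambda)$), I would isolate two symplectic analogues: (A) the standard weight $w_{st}(F)$ of a standard flag with length-permutation $\sigma$ equals $\mathop{Wmaj}(\sigma)$; and (B) the number $N_\sigma$ of standard flags of $(\mathbb F_q^{2d},\omega)$ with length-permutation $\sigma$ equals $q^{l^\pm(\sigma)}$. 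Granting these, the sum over standard flags becomes $\sum_{\sigma\in\mathcal S_d^\pm}N_\sigma\,t^{w_{st}}=\sum_{\sigma\in\mathcal S_d^\pm}q^{l^\pm(\sigma)}t^{\mathop{Wmaj}(\sigma)}$, which is the assertion.

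For (A), unwinding the definition in Section \ref{secthalfbases} shows that a standard flag breaks exactly at the \emph{generalized descents} of $\sigma$: the positions $i<d$ with $\sigma(i)>_\pm\sigma(i+1)$, together with the position $d$ itself precisely when $\sigma(d)<0$ (the maximal case). Hence $w_{st}=\sum_{i\in\tilde D}i$ where $\tilde D$ is this generalized descent set, and it remains to prove the purely combinatorial identity $\sum_{i\in\tilde D}i=\mathop{Wmaj}(\sigma)$. Writing $s_j=1$ if $\sigma(j)>0$ and $s_j=0$ if $\sigma(j)<0$, the difference between an ordinary descent and a $<_\pm$-descent at $i$ is exactly $s_i-s_{i+1}$ (a transition $+\,-$ is an ordinary descent but a $<_\pm$-ascent, and $-\,+$ conversely, while equal signs agree), so an Abel summation gives $\sum_{i=1}^{d-1}i(s_i-s_{i+1})=\sum_{j=1}^d s_j-d\,s_d$. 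Combining this with $\#\{j:\sigma(j)<0\}=d-\sum_j s_j$ and the indicator $[\sigma(d)<0]=1-s_d$ from formula (\ref{defsignedmaj}) makes every term cancel, yielding $\mathop{Wmaj}(\sigma)-\sum_{i\in\tilde D}i=0$.

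For (B), I would count free parameters in the half-basis construction of Section \ref{secthalfbases}, exactly as Proposition \ref{propcanbaseslambda} does in type A but now tracking the order $<_\pm$. The crucial simplification is that the orthogonality coefficients $\lambda_1,\dots,\lambda_i$ attached to $b_{-\sigma(1)},\dots,b_{-\sigma(i)}$ are never free: membership of $f_{i+1}$ in the prescribed $V_j$ (flag regime), or the relations $\omega(f_{i+1},f_m)=0$ (post-flag regime), determine them uniquely, the relevant linear system being triangular with nonzero diagonal because $\omega(b_{-\sigma(k)},f_m)\ne 0$ only for $m=k$. All freedom therefore lies in the length-vectors $g_{i+1}$, whose free coordinates are the available indices strictly below $\sigma(i+1)$ in $<_\pm$, namely those $a\in\{\pm\sigma(i+1),\dots,\pm\sigma(d)\}$ with $a<_\pm\sigma(i+1)$. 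Summing over $i$ and separating the contributions $\sigma(j)$ (which produce exactly the inversions $\sigma(i)>_\pm\sigma(j)$, giving $\sum_{i<j,\sigma(i)>_\pm\sigma(j)}1$) from the antipodal contributions $-\sigma(j)$ (which must collect to the sign-part $\sum_{i,\sigma(i)<0}(d+1+\sigma(i))$) recovers $l^\pm(\sigma)$ as in formula (\ref{defsignedlength}), so that $N_\sigma=q^{l^\pm(\sigma)}$.

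I expect step (B) to be the main obstacle, and within it the bookkeeping of the antipodal terms $-\sigma(j)<_\pm\sigma(i)$: one must verify that these free parameters, summed over all steps, reassemble precisely into the sign-part of $l^\pm$, so that the $<_\pm$-length of a negative basis vector surfaces correctly. Establishing that the $\lambda_k$ are genuinely determined (contributing no spurious dimensions) and that each $g_{i+1}$ is the unique shortest admissible vector are the technical points requiring the most care; by contrast the weight identity (A) is comparatively routine once the generalized-descent description of the breaks is in hand.
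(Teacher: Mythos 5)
Your proposal is correct, and its overall architecture --- partition the standard flags by length-permutation $\sigma$, then prove (A) $w_{st}=\mathop{Wmaj}(\sigma)$ on each fibre and (B) fibre size $q^{l^\pm(\sigma)}$ --- is exactly the strategy the paper announces immediately after the theorem and executes via Propositions \ref{propsignedstandardweight} and \ref{propdimhalfbase}. Your step (B) coincides with the paper's Proposition \ref{propdimhalfbase}: the same determination of the orthogonality coefficients (one small correction: $\omega(b_{-\sigma(k)},f_m)$ can also be nonzero for $m<k$, through free coefficients of $f_m$ at position $\sigma(k)$; the linear system is triangular with unit diagonal, which is all your uniqueness argument actually needs), and the same split of the free coefficients into inversion-type and sign-type. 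The bookkeeping you flag as the main obstacle is precisely where the paper spends its effort: it assigns each sign-type free coefficient to an index $i$ with $\sigma(i)<0$ and exhibits a bijection between the coefficients assigned to $i$ and the set $\{-\sigma(i),-\sigma(i)+1,\dots,d\}$ of cardinality $d+1+\sigma(i)$, which is exactly the verification you postponed --- so your plan is viable but incomplete at that one point. Where you genuinely depart from the paper is step (A): the paper proves Proposition \ref{propsignedstandardweight} by induction on $d$, erasing $\sigma(d)$, recomputing the half-basis, and distinguishing six cases according to the relative position of $0,\sigma(d-1),\sigma(d)$, whereas you identify the break set of a standard flag with the $<_\pm$-descents of $\sigma$ augmented by $d$ precisely when $\sigma(d)<0$ (which matches the paper's definition of standard symplectic flags) and then verify $\sum_{i\in\tilde D}i=\mathop{Wmaj}(\sigma)$ through the Abel-summation identity $\sum_{i=1}^{d-1}i(s_i-s_{i+1})=\sum_{j=1}^{d}s_j-d\,s_d$. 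That computation is correct --- it reproduces, for instance, $w_{st}=1+3+4+6=14=\mathop{Wmaj}(\sigma)$ on the paper's example $(\sigma(1),\dots,\sigma(6))=(-5,3,-1,6,4,-2)$ --- and it is shorter and makes the equality of the two statistics more transparent than the six-case induction; what the paper's erasure argument buys instead is machinery that transfers with only minor changes to the type D setting, which your direct identity would have to be re-derived for.
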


Theorem \ref{thmstandsympl} will be proven in the next section by
showing that the set of all 
standard flags of $(\mathbb F_q^{2d},\omega)$ with length-permutation $\sigma
\in \mathcal S_d^\pm$ contains 
$q^{l^\pm(\sigma)}$ elements and that all these standard flags have the
same standard weight $\mathop{Wmaj}(\sigma)$.

\subsection{Half-bases with given length-permutation}

We denote by $\mathcal{B}(\sigma)$ the set of all half-bases
of $\mathbb F^{2d}$ with associated length-permutation $\sigma$.
The set $\mathcal{B}(\mathop{id})$ for example corresponds to 
the half-basis $b_1,b_2,\dots,b_d$ associated to the empty flag.
The half-basis $b_{-1},b_{-2},\dots,b_{-d}$ belongs to $\mathcal B(c)$
where $c(i)=-i$ is the Coxeter element. The standard 
flag associated to $b_{-1},b_{-2},\dots,b_{-d}$
is the complete flag $V_1,\dots,V_i=\oplus_{j=1}^i \mathbb Fb_{-j},\dots,
V_d=\oplus_{j=1}^d \mathbb Fb_{-j}$. The set
$\mathcal B(c)$ corresponds to complete generic flags and is in some
sense as large as possible.

\begin{prop}\label{propdimhalfbase} The set $\mathcal{B}(\sigma)$ of all half-bases
of $\mathbb F^{2d}$ with associated length-permutation $\sigma$
is in one-to-one correspondence with the vector space $\mathbb F^{l(\sigma)}$
where $l(\sigma)$ is given by Formula (\ref{defsignedlength}) of 
Proposition \ref{propsignedlength}.
\end{prop}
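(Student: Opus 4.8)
The plan is to follow the strategy of Proposition \ref{propcanbaseslambda} for type A: build the canonical half-basis $f_1,\dots,f_d$ attached to a fixed length-permutation $\sigma\in\mathcal S_d^\pm$ step by step exactly as in Section \ref{secthalfbases}, read off the coordinates that remain free, and show that their number equals $l^\pm(\sigma)$ as given by (\ref{defsignedlength}). The correspondence $\mathcal B(\sigma)\to\mathbb F^{l^\pm(\sigma)}$ then sends a half-basis to the tuple of its free coefficients; the point is to check that everything else is forced.

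First I would sort, at each step $i$, the $2d$ coordinate positions (indexed by $\{\pm1,\dots,\pm d\}$ and ordered by length, i.e. by $<_\pm$). In $g_i$ the leading position $b_{\sigma(i)}$ carries the forced coefficient $1$; every position of length larger than $\lambda(b_{\sigma(i)})$ and every forbidden position $\pm\sigma(1),\dots,\pm\sigma(i-1)$ carries a forced $0$; and every remaining (available and strictly shorter) position carries a free coefficient. The vector $f_i=g_i+\sum_{k<i}\lambda_k b_{-\sigma(k)}$ differs from $g_i$ only in the correction coefficients $\lambda_k$, placed at the forbidden positions $-\sigma(k)$; these are not free but uniquely determined by the defining requirement of the construction, namely membership $f_i\in V_j$ in the flag range and orthogonality $\omega(f_i,f_m)=0$ for $m<i$ in the Lagrangian-completion range. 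In the latter case uniqueness is immediate, since $\omega(b_{-\sigma(k)},f_m)$ vanishes for $k<m$ (the position $\sigma(k)$ is forbidden in $f_m$) and equals $\pm1$ for $k=m$, so the linear system for the $\lambda_k$ is triangular with invertible diagonal; the flag range is handled identically, using that $V_j$ is isotropic. Consequently the free parameters of a half-basis in $\mathcal B(\sigma)$ are exactly the free coefficients of the $g_i$, and their count does not depend on the range in which a given step lies.

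It then remains to count these free coefficients. At step $i$ they are in bijection with the available positions $p<_\pm\sigma(i)$, which split into two families: the future values $p=\sigma(j)$ with $j>i$ and $\sigma(j)<_\pm\sigma(i)$, and the negations $p=-\sigma(j)$ with $j\ge i$ and $-\sigma(j)<_\pm\sigma(i)$. Summed over all $i$, the first family counts exactly the pairs $i<j$ with $\sigma(i)>_\pm\sigma(j)$, reproducing the inversion term of (\ref{defsignedlength}) precisely as in the type A count. For the second family I would pass to the lengths $c_i=\lambda(b_{-\sigma(i)})$ and use $\lambda(b_{\sigma(i)})+\lambda(b_{-\sigma(i)})=2d+1$ to rewrite $-\sigma(j)<_\pm\sigma(i)$ symmetrically as $c_i+c_j\le 2d$; the negation-family total then equals the number of pairs $i\le j$ with $c_i+c_j\le 2d$, a quantity depending only on the length-set $\{c_1,\dots,c_d\}$ and equal, by a short count organized around the midpoint $d$, to $\sum_{\sigma(i)<0}(d+1+\sigma(i))$, the sign-part of (\ref{defsignedlength}). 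Adding the two contributions yields $l^\pm(\sigma)$.

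The main obstacle is this last combinatorial bookkeeping: cleanly separating the free coordinates into the inversion-type and negation-type families and verifying that the negation-type total collapses to the sign-part. I expect the running examples $\sigma=(-2,-3,1)$ and the Coxeter element $c=(-1,\dots,-d)$ to serve as useful sanity checks, the latter forcing the maximal value $d^2$. A secondary technical point worth spelling out is the uniqueness of the correction coefficients $\lambda_k$ in the flag range, which must be argued from the isotropy of $V_j$ rather than from the triangular orthogonality system available only in the Lagrangian-completion range.
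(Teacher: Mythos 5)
Your proposal is correct and has the same overall architecture as the paper's proof: the same parametrization of $\mathcal B(\sigma)$ by the free coefficients of the $f_i$ (leading coefficient $1$, forced zeros at longer and at forbidden positions, correction entries at the positions $-\sigma(k)$), and the same split of the free coefficients into inversion-type and sign-type matching the two summands of (\ref{defsignedlength}); your explicit triangularity argument for the uniqueness of the correction coefficients, including the isotropy-of-$V_j$ point in the flag range, is a welcome addition, since the paper merely asserts this uniqueness. Where you genuinely diverge is the evaluation of the sign-type total. The paper assigns each sign-type coefficient to an index $i$ with $\sigma(i)<0$ (distinguishing whether the coefficient sits in $f_i$ itself or is the coefficient of index $-\sigma(i)$ in an earlier $f_j$ with $\vert\sigma(j)\vert>-\sigma(i)$) and exhibits, for each such $i$, a bijection of its fiber with the $d+1+\sigma(i)$ integers in $\{-\sigma(i),\dots,d\}$. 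You instead pass to the lengths $c_i=\lambda(b_{-\sigma(i)})$ and reduce everything to the symmetric condition $c_i+c_j\leq 2d$ on pairs $i\leq j$; this reformulation is correct, and the ``short count'' you defer does close. Indeed, for each $m\in\{1,\dots,d\}$ exactly one of $m$, $2d+1-m$ occurs among the $c_i$, and $c_i\leq d$ exactly when $\sigma(i)<0$; hence pairs with both values positive never satisfy the condition, pairs with both values negative always do (giving $\binom{n+1}{2}$ with $n$ the number of negative values), and a mixed pair counts precisely when the positive value exceeds the absolute value of the negative one, so the total is
\begin{align*}
\binom{n+1}{2}+\sum_{\sigma(i)<0}\bigl(d-\vert\sigma(i)\vert\bigr)-\binom{n}{2}
&=\sum_{\sigma(i)<0}\bigl(d+1+\sigma(i)\bigr),
\end{align*}
as required. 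The trade-off: the paper's fibered bijection is local and transparent case by case, while your symmetric pair count is shorter and makes manifest that the sign-type total depends only on the multiset $\{c_1,\dots,c_d\}$; but you should write out the three-case count above explicitly, since it is exactly the step you flagged as the remaining obstacle.
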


\begin{proof} (We suggest to contemplate the example given
in Section \ref{sectRothedsympl} while reading the proof.)
Elements $f_1,\dots,f_d$ of $\mathcal B(\sigma)$
are of the following form:
$f_i$ has a coefficient $1$ (represented by a bullet $\bullet$ in 
the example of Section \ref{sectRothedsympl}) at position $\sigma(i)$, has
coefficients $0$ (represented by empty spaces in 
the example of Section \ref{sectRothedsympl})
at positions $\sigma(1),\dots,\sigma(i-1)$ 
and at positions not of the form $-\sigma(1),\dots,-\sigma(i-1)$
which follow $\sigma(1)$ (with indices in the order 
$1,2,3,\dots,d,-d,-d+1,-d+2,\dots,-2,-1$ given by $<_\pm$) and
has arbitrary coefficients (represented by $\times$ or $\otimes_*$ in 
Section \ref{sectRothedsympl}, see below for explanations)
at positions not of the form
$\pm \sigma(1),\dots,\pm\sigma(i-1)$ which precede $\sigma(i)$.
The coefficients at positions $-\sigma(1),\dots,-\sigma(i-1)$ 
(represented by $\perp$ in 
Section \ref{sectRothedsympl}) are uniquely
determined by orthogonality of $f_i$ with $f_1,\dots,f_{i-1}$.

Proposition \ref{propdimhalfbase} will follow from the fact that
the number of arbitrary free coefficients of such a half-basis
$f_1,\dots,f_d$ in $\mathcal B(\sigma)$ equals the length 
$l(\sigma)$ of $\sigma$.

Free coefficients of $f_i$ are of two types: 
We call such a coefficient involved in $f_i$ of \emph{inversion-type}
(represented by a cross $\times$ in 
the example of Section \ref{sectRothedsympl})
if it corresponds to an index $k$ before $\sigma(i)$
with $j=\sigma^{-1}(k)>i$ (ie. if there exists
$j>i$ such that the index $\sigma(j)=k$ comes before
$\sigma(i)$ with respect to the order 
$1,2,3,\dots,d,-d,-d+1,\dots,-2,-1$ given by $<_\pm$).

The total number of such coefficients (which are all free)
is given by the inversion contribution
$\sum_{0<i<j,\sigma(i)>\pm \sigma(j)}1$
to $l(\sigma)$ in Formula (\ref{defsignedlength}). 

The remaining free coefficients (represented by 
tensor products $\otimes$ in Section \ref{sectRothedsympl}) 
of $f_i$ correspond to indices $k<_\pm 
\sigma(i)$ such that $\sigma(-k)\geq i$. We call such indices of 
\emph{sign-type}. If $\sigma(i)>0$, free coefficients of sign-type
involved in $f_i$ correspond to integers $j>i$ such that 
$0<-\sigma(j)<\sigma(i)$. If $\sigma(i)<0$, free
coefficients of sign-type have indices $-\sigma(j)$ for $j>i$ 
such that $\sigma(j)\vert>-\sigma(i)>0$ and $-\sigma(j)$ for $j\geq i$
such that $\sigma(j)<0$. (In particular, if $\sigma(i)<0$, the 
coefficient of $-\sigma(i)$ is always free of sign-type).
The following simplified Rothe diagrams (with the second double vertical bar separating 
positive from negative indices) resume the different cases 
for free coefficients of sign-type (the meaning of the indices 
will be explained later):
$$\begin{array}{c}
\begin{array}{l||c|c||c|c}
i:&\otimes_j&\bullet&\phantom{\otimes_j}&\phantom{\otimes_j}\\
\hline
j:&&\phantom{\otimes_j}&&\bullet
\end{array}
\\
\\
\begin{array}{l||c|c|c|c||c|c|c|c}
i:&\otimes_{j_3}&\otimes_i&\otimes_{i}&\phantom{\otimes_{i_i}}&
\otimes_{i}&\phantom{\otimes_{i_i}}&
\bullet&\phantom{\otimes_{i_i}}\\
\hline
j_1:&&\phantom{\otimes_{i_i}}&&\bullet&&&\phantom{\otimes_{i_i}}&\\
\hline
j_2:&&&&&&\bullet&&\\
\hline
j_3:&&&&&&&&\bullet\\
\end{array}

\end{array}$$

We will show that the number of coefficients of sign-type 
is given by the last summand
$\sum_{0<i,\sigma(i)<0}(d+1+\sigma(i))$ of Formula 
(\ref{defsignedlength}) defining $l(\sigma)$. 

We associate an integer (written as an index of $\otimes$ 
in Section \ref{sectRothedsympl}) in the set $\{1\leq i\leq d\ \sigma(i)<0\}$ 
to each free coefficient of sign-type in the following way:
A free coefficient of sign-type is associated to $i$ (with $\sigma(i)<0$) if 
it corresponds either to a free coefficient of $f_i$ with $\sigma(i)<0$ whose
index belongs to the set 
$$\{-\sigma(i),\pm (1-\sigma(i)),\dots,\pm d\}$$
or if it corresponds to a coefficient indexed by $-\sigma(i)$ of $f_j$ 
with $j<i$ such that $\vert\sigma(j)\vert>-\sigma(i)$.

We claim that every free coefficient of sign-type is associated to an integer 
$i$ with $\sigma(i)<0$ and that there are exactly $d+1+\sigma(i)$
such free coefficients of sign-type associated to $i$. 
This ends the proof since it implies the existence of exactly
$\sum_{i>0,\sigma(i)<0}d+1+\sigma(i)$ free coefficients of sign-type.

Let us first consider a free coefficient of sign-type, 
say a coefficient of index $k$ involved in $f_i$. 
Since it is a free coefficient, we have $k<_\pm \sigma(i)$. 
Suppose first that $\sigma(i)>0$. This implies $1<k<\sigma(i)$.
Since it is a free coefficient of sign-type, there exists $j>i$ such that
$k=-\sigma(j)<\sigma(i)$ and the coefficient is associated to $j$.
Suppose now $\sigma(i)<0$. There exists again $j$ such that $k=-\sigma(j)$.
If $k<0$, then $\vert k\vert=\vert\sigma(j)\vert >\vert \sigma(i)\vert$ 
and the coefficient is associated to $i$.
If $k>0$ then the free coefficient is associated to $i$ if $k\geq -\sigma(i)$
and to $j$ otherwise.

At last we have to show that there are
$d+1+\sigma(i)$ free coefficients of sign-type associated to an 
integer $i$ such that $\sigma(i)<0$. This is realized by
a bijection between such coefficients and the set $\{-\sigma(i),-\sigma(i)+1,
\dots,d-1,d\}$ of all $d+1+\sigma(i)$ integers between $-\sigma(i)$ and $d$.
Indeed, let $k$ be such an integer. There exists $j$ such that 
$\vert\sigma(j)\vert=k$. If $j>i$ then the coefficient of $b_{-k}$ in $f_i$ 
is of sign-type. If $j\leq i$ then the coefficient of $b_{-\sigma(i)}$ in
$f_j$ is of sign-type.
\end{proof}

\subsection{Rothe diagrams}\label{sectRothedsympl}

The proof of Proposition \ref{propdimhalfbase}
can be visualized by generalizing the notion of a Rothe
diagram to the symplectic setting. The 
Rothe diagram of the signed permutation 
$(\sigma(1),\dots,\sigma(6))=(-5,3,-1,6,4,-2)$
is then given by 
$$\begin{array}{c||c|c|c|c|c|c||c|c|c|c|c|c|}
i\backslash \sigma(i)&1&2&3&4&5&6&-6&-5&-4&-3&-2&-1\\
\hline\hline
1&\otimes_3&\otimes_6&\times&\times&\otimes_1&\times&\otimes_1&\bullet&&&&\\
\hline
2&\otimes_3&\otimes_6&\bullet&&\perp&&&&&&&\\
\hline
3&\otimes_3&\otimes_3&&\times&\perp&\times&\otimes_3&&\otimes_3&\perp&\times&\bullet\\
\hline
4&\perp&\otimes_6&&\times&\perp&\bullet&&&&\perp&&\\
\hline
5&\perp&\otimes_6&&\bullet&\perp&&\perp&&&\perp&&\\
\hline
6&\perp&\otimes_6&&&\perp&&\perp&&\perp&\perp&\bullet&\\
\hline
\end{array}$$
Crosses $\times$ (corresponding to 
the seven inversions 
$$(1,3),(1,4),(1,5),(3,4),(3,5),(3,6),(4,5)$$
defining intersections of rows and columns delimited with bullets)
and tensor products $\otimes_i$ (with indices $i=1,3,6$ corresponding to the $3$ negative 
values $\sigma(1)=-5,\sigma(3)=-1,\sigma(6)=-1$) 
are arbitrary elements, bullets $\bullet$ represent $1$,
symbols for perpendicularity $\perp$ are coefficients determined uniquely 
by orthogonality relations and empty cases 
represent coefficients which are $0$.
The length $l(\sigma)$ of $\sigma$ is given by Formula 
(\ref{defsignedlength}) and equals the number of crosses $\times$ 
(corresponding to the seven signed inversions 
$$1<2,1<4,1<5,3<4,3<5,3<6,4<5$$
of $\sigma$ whose number is counted by   
the first summand of (\ref{defsignedlength})) added to 
the number of tensor products $\otimes_i$ for $i\in \{1,3,6\}$ (corresponding
to the three negative values $\sigma(1)=-5,\sigma(3)=-1,\sigma(6)=-2$). 
More precisely, the number of symbols $\otimes_i$ is given by 
$6+1+\sigma(i)$ for $i\in\{1,3,6\}$.

\section{Standard weights of standard flags associated
to elements of $\mathcal B(\sigma)$}

\begin{prop}\label{propsignedstandardweight} For every element $\sigma$ of $\mathcal S_d^\pm$,
the  standard weight of a standard flag associated
to a half-basis in $\mathcal B(\sigma)$ is given by the Weyl-Major index 
$$\mathop{Wmaj}(\sigma)=\sum_{i>0,\sigma(i+1)<\sigma(i)}i+\sum_{i>0,\sigma(i)<0}1$$
(cf. Formula (\ref{defsignedmaj})) of $\sigma$.
\end{prop}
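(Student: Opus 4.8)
The plan is to compute the standard weight $w_{st}(F)=\sum_i\dim(V_i)$ directly from the combinatorial data of the length-permutation $\sigma$ and match it, term by term, with $\mathop{Wmaj}(\sigma)$ as defined in (\ref{defsignedmaj}).

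First I would pin down the subspace dimensions of the standard flag attached to a half-basis $f_1,\dots,f_d\in\mathcal B(\sigma)$. By the construction of the canonical half-basis, $f_i$ and $f_{i+1}$ lie in a common quotient $V_j/V_{j-1}$ exactly when the lengths of $b_{\sigma(i)}$ and $b_{\sigma(i+1)}$ increase, i.e. when $\sigma(i)<_\pm\sigma(i+1)$; hence a subspace boundary occurs at each internal position $1\le i\le d-1$ with $\sigma(i)>_\pm\sigma(i+1)$. Combined with the definition of a standard flag, this identifies the set $\{\dim(V_i)\}$ with the $<_\pm$-descents in $\{1,\dots,d-1\}$, augmented by the top dimension $d$ precisely when the flag is maximal, which by the construction happens if and only if $\sigma(d)<0$. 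Writing $\mathbf 1[\cdot]$ for an indicator, this gives
$$w_{st}(F)=\sum_{1\le i\le d-1,\ \sigma(i)>_\pm\sigma(i+1)}i\ +\ d\cdot\mathbf 1[\sigma(d)<0]\ .$$

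Second, I would convert $<_\pm$-descents into ordinary descents by a sign case-analysis on consecutive values $a=\sigma(i),\,b=\sigma(i+1)$: the two orders agree when $a,b$ share a sign, while $a>0>b$ is an ordinary descent but not a $<_\pm$-descent, and $a<0<b$ is a $<_\pm$-descent but not an ordinary one. This is exactly the phenomenon noted before the statement, that two consecutive integers contribute a descent to the Weyl-Major index when $\sigma(i)>0>\sigma(i+1)$. It yields the pointwise identity $\mathbf 1[a>_\pm b]=\mathbf 1[a>b]-\mathbf 1[a>0>b]+\mathbf 1[a<0<b]$, and after multiplying by $i$ and summing,
$$\sum_{i:\ \sigma(i)>_\pm\sigma(i+1)}i=\sum_{i:\ \sigma(i)>\sigma(i+1)}i-\sum_{i:\ \sigma(i)>0>\sigma(i+1)}i+\sum_{i:\ \sigma(i)<0<\sigma(i+1)}i\ .$$
The two sign-change sums are then handled by summation by parts applied to $g(i)=\mathbf 1[\sigma(i)<0]$: since $g(i+1)-g(i)$ equals $+1$ at a $+\!\to\!-$ change, $-1$ at a $-\!\to\!+$ change, and $0$ otherwise, an Abel rearrangement of $\sum_{i=1}^{d-1}i\,(g(i+1)-g(i))$ telescopes to $d\,g(d)-\sum_{i=1}^{d}g(i)$, that is
$$\sum_{i:\ \sigma(i)>0>\sigma(i+1)}i-\sum_{i:\ \sigma(i)<0<\sigma(i+1)}i=d\cdot\mathbf 1[\sigma(d)<0]-\#\{i:\sigma(i)<0\}\ .$$
Substituting this into the formula for $w_{st}(F)$, the two copies of $d\cdot\mathbf 1[\sigma(d)<0]$ cancel and what survives is precisely $\sum_{i:\,\sigma(i)>\sigma(i+1)}i+\#\{i:\sigma(i)<0\}=\mathop{Wmaj}(\sigma)$.

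The main obstacle is the first step: correctly reading off the subspace dimensions from the half-basis construction, and in particular recognizing that maximal flags (those with $\sigma(d)<0$) carry the extra top boundary at dimension $d$. Once this is established the remaining manipulations are routine, and it is exactly the cancellation of the $d\cdot\mathbf 1[\sigma(d)<0]$ terms that produces the sign-part $\#\{i:\sigma(i)<0\}$ of the Weyl-Major index; when $\sigma$ happens to lie in $\mathcal S_d\subset\mathcal S_d^\pm$ all sign terms vanish and the argument specializes to the type A identity of Proposition \ref{propstandweight}.
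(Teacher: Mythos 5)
Your proof is correct, and it takes a genuinely different route from the paper's. The paper argues by induction on $d$: it erases $\sigma(d)$ to produce $\hat\sigma\in\mathcal S_{d-1}^\pm$, truncates and re-orthogonalizes the half-basis into $\hat f_1,\dots,\hat f_{d-1}$, and then verifies in a six-case analysis (according to the relative positions of $0$, $\sigma(d-1)$, $\sigma(d)$) that $\mathop{Wmaj}(\sigma)-\mathop{Wmaj}(\hat\sigma)=w_{st}(F)-w_{st}(\hat F)$. You instead compute both sides in closed form: from the definition of a standard flag you read off that its dimension set is exactly the set of $<_\pm$-descents in $\{1,\dots,d-1\}$, augmented by $d$ precisely when $\sigma(d)<0$, so that $w_{st}(F)=\sum_{\sigma(i)>_\pm\sigma(i+1)}i+d\cdot\mathbf 1[\sigma(d)<0]$, and you then reconcile $<_\pm$-descents with ordinary descents via the pointwise identity $\mathbf 1[a>_\pm b]=\mathbf 1[a>b]-\mathbf 1[a>0>b]+\mathbf 1[a<0<b]$ and Abel summation applied to $g(i)=\mathbf 1[\sigma(i)<0]$, which telescopes to $d\cdot\mathbf 1[\sigma(d)<0]-\#\{i:\sigma(i)<0\}$. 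I checked both identities and the cancellation; everything is sound, and your formula for $w_{st}$ agrees with the paper's worked example $(-5,3,-1,6,4,-2)$, where the $<_\pm$-descents $1,3,4$ plus the top dimension $6$ give $14=\mathop{Wmaj}(\sigma)$. What each approach buys: yours is shorter, avoids all case analysis, and makes transparent exactly where the sign-part $\#\{i:\sigma(i)<0\}$ of the Weyl-Major index comes from (the cancellation of the two boundary terms $d\cdot\mathbf 1[\sigma(d)<0]$); the paper's inductive erasure keeps the geometry explicit, tracking how the standard flag itself deforms when $f_d$ is deleted, which is the same mechanism the author reuses for the type B and D variants. One small point to phrase more carefully: a maximal flag need not have $\sigma(d)<0$ in general (the complete flag inside the Lagrangian $\langle b_1,\dots,b_d\rangle$ has identity length-permutation), but a maximal \emph{standard} flag has $\sigma(d)<0$ by definition, and conversely $\sigma(d)<0$ forces maximality by the construction of the half-basis; so your dichotomy is exactly right for the standard flag attached to $\mathcal B(\sigma)$, and you should attribute it to the definition of standardness together with the paper's observation that lengths increase within a block $V_j\setminus V_{j-1}$, rather than to the construction alone.
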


We illustrate Proposition \ref{propsignedstandardweight} by 
the example of Section \ref{sectRothedsympl}. Standard flags 
associated to $(\sigma(1),\dots,\sigma(6))=(-5,3,-1,6,4,-2)$ are
given by $(V_1,V_2,V_3,V_4)$ where
\begin{align*}
V_1&=\mathbb F f_1,\\
V_2&=\mathbb F f_1+\mathbb F f_2+\mathbb F f_3,\\
V_3&=\mathbb F f_1+\mathbb F f_2+\mathbb F f_3+\mathbb F f_4,\\
V_4&=\mathbb F f_1+\mathbb F f_2+\mathbb F f_3+\mathbb F f_4+\mathbb F f_5+\mathbb F f_6
\end{align*}
with standard weight $\sum_{i=1}^4\dim(V_i)=1+3+4+6=14$.

The ascending runs of $\sigma$ are
$$\begin{array}{c|c}
\hbox{run}&\sigma(\hbox{run})\\
\hline
1,2&-5,3\\
3,4&-1,6\\
5&4\\
6&-1
\end{array}$$
and correspond to the composition
$2+2+1+1$ of $d=6$. The Weyl-Major index of $\sigma$ equals thus 
also
$$(2+4+5)+3=14$$
(with the summand $3$ corresponding to the three elements $i$ such 
that $\sigma(i)<0$).

\begin{proof}[Proof of Proposition \ref{propsignedstandardweight}]
We consider a symplectic standard flag $F=(V_1\subset\dots\subset V_{k-1}\subset V_k)$ with associated length-permutation $\sigma\in \mathcal S_d^\pm$.
We have to show that $F$ has standard weight $\mathop{Wmaj}(\sigma)$.

We prove Proposition \ref{propsignedstandardweight} by induction 
on $d$. If $d=1$, the unique element of $\mathcal B(\mathrm{id})$
corresponds to the empty standard flag of  weight $\mathop{Wmaj}(\mathrm{id})=0$ and elements of $\mathcal B(c)$ associated to the Coxeter element 
$c(1)=-1$ define complete flags $\mathbb F(x,1)$ in $(\mathbb F^2,\omega)$. 
They are standard flags
of  standard weight $1=\mathop{Wmaj}(c)$.

Given an integer $d\geq 2$, we denote by $\hat\sigma\in \mathcal S_{d-1}^\pm$ the signed permutation 
obtained by ``erasing'' $\sigma(d)$. More precisely, $\hat\sigma(i)$ for 
$i\in\{1,\dots,d-1\}$ is defined 
by 
$$\\\hat\sigma(i)=\left\lbrace
\begin{array}{ll}
\sigma(i)&\hbox{if }\vert\sigma(i)\vert<\vert\sigma(d)\vert,\\
\sigma(i)-1\quad&\hbox{if }\sigma(i)>\vert\sigma(d)\vert,\\
\sigma(i)+1&\hbox{if }\sigma(i)<-\vert\sigma(d)\vert\ .
\end{array}\right.$$ 

We transform the canonical half-basis $f_1,\dots,f_d$
associated to $F$ into a canonical half-basis $\hat f_1,\dots,\hat f_{d-1}$
of $\mathbb F_q^{2(d-1)}$
with associated length-permutation $\hat\sigma$ as follows: 
We remove first the coefficients corresponding to indices $\pm \sigma(d)$
from all vectors $f_1,\dots,f_{d-1}$. 
The destroyed orthogonality (with respect to the symplectic form) 
is now restored by correcting 
the coefficients of $f_i$ with indices 
$-\sigma(1),-\sigma(2),\dots,-\sigma(i-1)$ 
(the correction is defined uniquely). Finally, we rename indices 
of absolute value larger than $\vert \sigma(d)\vert$ 
by keeping their signs and 
decreasing their absolute value by $1$. We call the resulting
elements $\hat f_1,\dots,\hat f_{d-1}$.
We denote the standard flag associated to $\hat f_1,\dots,\hat f_{d-1}$
by $\hat F$. It is obviously associated with the length-permutation 
$\hat \sigma$.

The induction step follows now from the equality
$$\mathop{Wmaj}(\sigma)-\mathop{Wmaj}(\hat\sigma)=w_{st}(F)-w_{st}(\hat F)$$
(with $w_{st}$ denoting the  standard weight of a standard flag) which we are going to establish.

The proof splits into six cases given by the relative positions of
the three integers $0,\sigma(d-1),\sigma(d)$.
The following table resumes the six cases:
$$\begin{array}{c|c|c|c|c}
&&\Delta\mathop{Wmaj}&\hat V_\omega&\Delta w_{st}\\
\hline\hline
0<\sigma(d-1)<\sigma(d)&\begin{array}{cc|}
\bullet&\\
\hline
&\bullet\\
\end{array}&0&\hat V_k&0\\
\hline
\sigma(d-1)<0<\sigma(d)&\begin{array}{c|c}
&\bullet\\
\hline
\bullet&\\
\end{array}&0&\hat V_k&0\\
\hline\hline
\sigma(d)<0<\sigma(d-1)&\begin{array}{c|c}
\bullet&\\
\hline
&\bullet\\
\end{array}&d&\hat V_{k-1}&d\\
\hline
\sigma(d)<\sigma(d-1)<0&\begin{array}{|cc}
&\bullet\\
\hline
\bullet&\\
\end{array}&d&\hat V_{k-1}&d\\
\hline\hline
0<\sigma(d)<\sigma(d-1)&\begin{array}{cc|}
&\bullet\\
\hline
\bullet&\\
\end{array}&d-1&\hat V_{k-1}&d-1\\
\hline\hline
\sigma(d-1)<\sigma(d)<0&\begin{array}{|cc}
\bullet&\\
\hline
&\bullet\\
\end{array}&1&\hat V_k&1\\
\hline
\end{array}$$
The first column describes all six possible relative positions of 
$0,\sigma(d-1),\sigma(d)$. The second column depicts them graphically
using the conventions for Rothe diagrams (positive indices are separated from
negative indices by a vertical bar). The third and 
fifth columns contain the differences $\mathop{Wmaj}(\sigma)-\mathop{Wmaj}(\hat\sigma)$, respectively $w_{st}(F)-w_{st}\hat F)$. The fourth column gives the index $\omega$ 
of the largest space in the standard flag $\hat F$.

The cases $0<\sigma(d-1)<\sigma(d)$ and $\sigma(d-1)<0<\sigma(d)$ are similar:
$V_k$ does not involve $f_k$ and corresponds to $\hat V_k$.
The standard flags $F$ and $\hat F$ have identical  weights (with respect
to the standard weight). An easy computation shows that $\sigma$ and 
$\hat \sigma$ have identical Weyl-Major indices.

The cases $\sigma(d)<0<\sigma(d-1)$ and $\sigma(d)<\sigma(d-1)<0$ 
are also similar: The standard flag $\hat F$ ends in both cases with 
the ``projection'' $\hat V_{k-1}$ spanned by $\hat f_1,\dots,\hat f_{\dim(V_{k-1})}$
of $V_{k-1}$ while $F$ ends with a Lagrangian $V_k$ 
of dimension $d$. This implies
that  weights of $F$ and $\hat F$ differ by $d$ and an easy computation
shows that this holds also for the Weyl-Major indices.

In the case $0<\sigma(d)<\sigma(d-1)$, the space $V_k$ is spanned 
by $f_1,\dots,f_{d-1}$ while the largest space of $\hat F$ is 
$\hat V_{k-1}$ spanned by $\hat f_1,\dots,\hat f_{\dim(V_{k-1})}$.
This implies a difference of $d-1$ for the  weights of the 
standard flags 
$F$ and $\hat F$. The same difference is realized by the flag-Major indices
$\mathop{Wmaj}(\sigma)$ and $\mathop{Wmaj}(\hat\sigma)$ of the corresponding
length-permutations.

Finally, in the last case $\sigma(d-1)<\sigma(d)<0$, the space 
$\hat V_{k}$ is a maximal Lagrangian of dimension $d-1$ while 
$V_k$ is a maximal Lagrangian of dimension $d$. This implies
a difference of $1$ between  weights and the same difference of $1$
is also realized by Weyl-Major indices.
\end{proof}

\subsection{Proof of Theorem \ref{thmsympl}}

\begin{proof} Analogues of Proposition \ref{propweightedflagstandardflag}
and Corollary \ref{corpartitionstandard} hold obviously in 
the symplectic setting.
\end{proof}

\section{Type B: Orthogonal groups of odd dimension}
\label{secttypeB}

For simplicity, we work again only over finite fields of odd characteristic.

\begin{proof}[Sketch of Proof of Theorem \ref{thmBequalC}]
We consider again the order $<_\pm$, see (\ref{pmorderonZ}).
Coordinate-vectors of an element in $\mathbb F^{2d+1}$
are always given in the form
$$(x_1,x_2,\dots,x_d,x_0,x_{-d},x_{1-d},\dots,x_{-2},x_{-1})\ .$$
We endow the vector space
$\mathbb F^{2d+1}$ generated by
$b_{\pm 1},\dots,b_{\pm d},b_0$
over a field of odd characteristic with the quadratic
form 
$$Q(x_1,\dots,x_d,x_0,x_{-d},\dots,x_{-1}))=x_0^2+
\sum_{i=1}^d x_ix_{-i}\ .$$

We consider the following bijection between elements of the set 
$\mathcal B_C(\lambda)$ of canonical bases for the symplectic space
and the set $\mathcal B_B(\lambda)$ of canonical bases (defined in
the obvious way) for the quadratic space $(\mathbb F_q^{2d+1},Q)$.

The coordinates $(x_1,\dots,x_d,x_0,x_{-d},\dots,x_{-1})$ 
of the $i$-th vector $f_i$ in a basis 
in $\mathcal B_B(\lambda)$ are as follows:
Coordinates with indices $<_\pm \lambda(i)$ in $\{0,\pm 1,\dots,\pm d\}\setminus
\{\pm \lambda(1),\dots,\pm \lambda(i)\}$
are free. The coordinate $x_{\lambda(i)}$ equals $1$.
The coordinates $x_{-\lambda(1)},\dots,x_{-\lambda(i-1)}$ are determined by 
orthogonality of $f_i$ to $f_1,\dots,f_{i-1}$.
The coordinate $x_{-\lambda(i)}$ of $f_i$ is determined by isotropy of $f_i$
(it is in fact always equal to $0$ if $\lambda(i)>0$). All coordinates $>_\pm \lambda(i)$
with indices in $\{\lambda(1),\dots,\lambda(i-1)\}$ or with indices in
$\{0,\pm 1,\dots,\pm d\}\setminus\{\pm \lambda(1),\dots,\pm \lambda(i)\}$
are zero. Observe that $x_{-\lambda(i)}$ is never free. However
$x_0$ is free if $\lambda(i)<0$ and it is equal to $0$ otherwise.
This shows that elements of $\mathcal B_B(\lambda)$
behave exactly in the same way as elements of $\mathcal B_C(\lambda)$.
We leave the easy details to the reader.
\end{proof}

We illustrate this Section by the Rothe diagram ``of type B'' associated 
to the signed permutation 
$(\sigma(1),\dots,\sigma(6))=(-5,3,-1,6,4,-2)$. Using conventions analogous to those of Section \ref{secthalfbases} it is given by
$$\begin{array}{c||c|c|c|c|c|c||c||c|c|c|c|c|c|}
i\backslash \sigma(i)&1&2&3&4&5&6&0&-6&-5&-4&-3&-2&-1\\
\hline\hline
1&\otimes_3&\otimes_6&\times&\times&\perp&\times&\otimes_1&\otimes_1&\bullet&&&&\\
\hline
2&\otimes_3&\otimes_6&\bullet&&\perp&&&&&&\perp&&\\
\hline
3&\perp&\otimes_3&&\times&\perp&\times&\otimes_3&\otimes_3&&\otimes_3&\perp&\times&\bullet\\
\hline
4&\perp&\otimes_6&&\times&\perp&\bullet&&\perp&&&\perp&&\\
\hline
5&\perp&\otimes_6&&\bullet&\perp&&&\perp&&\perp&\perp&&\\
\hline
6&\perp&\perp&&&\perp&&\otimes_6&\perp&&\perp&\perp&\bullet&\\
\hline
\end{array}\ .$$

\section{Type D: Orthogonal groups in even dimensions}

As in Section \ref{secttypeB}, all finite fields are of odd characteristic.

We denote by $\mathcal H=\mathcal H(\mathbb F)$ the hyperbolic plane
over a field $\mathbb F$
realized as the quadratic space $\mathbb F^2$ endowed with the quadratic 
form $(x,y)\longmapsto xy$, called \emph{norm} in the sequel.

We denote by $\mathcal I$ a fixed maximal $d$-dimensional 
isotropic subspace of $\mathcal H^d$ (denoting $d$ orthogonal copies
of $\mathcal H$). As always, a 
flag $F=(V_1\subset \dots\subset V_k)$ is a strictly increasing sequence
of non-trivial isotropic subspaces of $\mathcal H^d$. The 
\emph{$\mathcal{I}$-parity} of a flag 
ending with $V_k$ is the parity of the integer $\dim(V_k/(V_k\cap \mathcal I))
=\dim(V_k)-\dim(V_k\cap \mathcal I)$. Flags of even parity are simply
called even flags. We denote by ${\mathcal F}^e(\mathcal H^d)$
the set of all even flags and by $\mathcal{WF}^e(\mathcal H^d)$
the set of all weighted even flags.

\begin{rem} The parity condition of type D flags has the following explanation:
Two complete flags of type D are related by an isometry of determinant $1$
if and only if they are both even or both odd. Otherwise an isometry of
determinant $-1$ is needed. The set of all complete flags of type D
decays thus into two orbits under the action of the 
simple linear group of Lie type D.

In contrast, complete flags of 
type A,B,C are always related by an isomorphism (of the corresponding 
structure) of determinant $1$ and the action of the 
corresponding simple linear group is thus transitive.
\end{rem}

\subsection{Proof of Corollary \ref{corformMD}}

\begin{prop}\label{propnbrflagsD} The space $\mathcal H^d$ over the finite field $\mathbb F_q$
contains exactly
$$q^{l(2d+l-2k-1)/2}{d\choose k}_q{k\choose l}_q$$
isotropic subspaces $V$ of dimension $k$ such that $\dim(V/(V\cap \mathcal I))=l$.
\end{prop}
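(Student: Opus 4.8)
The plan is to organize the count by first fixing the intersection $U:=V\cap\mathcal I$ and then reducing to a smaller hyperbolic space. Since $\dim V=k$ and $\dim(V/(V\cap\mathcal I))=l$, we have $\dim U=k-l$, and $U$ ranges over all $(k-l)$-dimensional subspaces of the $d$-dimensional space $\mathcal I$, of which there are ${d\choose k-l}_q$ by Proposition~\ref{propGrassm}. For a fixed such $U$, I would invoke the standard Witt reduction: because $U$ is totally isotropic we have $U\subseteq U^\perp$, and in odd characteristic (where totally isotropic coincides with totally singular) the quadratic form descends to a nondegenerate form on $U^\perp/U$. Since $\mathcal I=\mathcal I^\perp$ forces $\mathcal I\subseteq U^\perp$, the quotient $U^\perp/U$ is again a hyperbolic space $\mathcal H^m$ with $m=d-k+l$, carrying the induced metabolizer $\bar{\mathcal I}=\mathcal I/U$. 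Totally isotropic $V$ with $U\subseteq V\subseteq U^\perp$ correspond bijectively to totally isotropic subspaces $\bar V=V/U$ of $\mathcal H^m$, with $\dim\bar V=l$, and the condition $V\cap\mathcal I=U$ translates into $\bar V\cap\bar{\mathcal I}=0$.

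This reduces the proposition to the special case $k=l$: the number of $l$-dimensional totally isotropic subspaces of $\mathcal H^m$ meeting a fixed metabolizer $\bar{\mathcal I}$ trivially. To count these, I would write $\mathcal H^m=\bar{\mathcal I}\oplus\bar{\mathcal I}'$ as a sum of two opposite metabolizers. A subspace $\bar V$ with $\bar V\cap\bar{\mathcal I}=0$ projects isomorphically onto its image $W:=\pi'(\bar V)\subseteq\bar{\mathcal I}'$ and is therefore the graph $\{A(w)+w:w\in W\}$ of a unique linear map $A\colon W\to\bar{\mathcal I}$. The space $W$ ranges over the ${m\choose l}_q$ subspaces of dimension $l$ in $\bar{\mathcal I}'\cong\mathbb F^m$, and for fixed $W$ the graph is totally isotropic precisely when the bilinear form $B_A(w,w'):=\langle A(w),w'\rangle$ on $W$ is alternating, since the cross terms $\langle A(w),A(w')\rangle$ and $\langle w,w'\rangle$ vanish (each pair lies in a single isotropic summand).

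The heart of the argument, and the step I expect to be most delicate to phrase cleanly, is to count the admissible maps $A$. The assignment $A\mapsto B_A$ is a linear surjection $\mathrm{Hom}(W,\bar{\mathcal I})\to\mathrm{Bil}(W)$, because the perfect pairing between $\bar{\mathcal I}$ and $\bar{\mathcal I}'$ restricts to a surjection $\bar{\mathcal I}\to W^\ast$; its kernel has dimension $l(m-l)$. Pulling back the space of alternating forms, which has dimension $\binom{l}{2}=l(l-1)/2$ in odd characteristic, I find that the admissible $A$ form a linear subspace of dimension $l(m-l)+l(l-1)/2=l(2m-l-1)/2$, hence there are $q^{l(2m-l-1)/2}$ of them. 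The special-case count is thus ${m\choose l}_q\,q^{l(2m-l-1)/2}$, and substituting $m=d-k+l$ turns the exponent into $l(2d+l-2k-1)/2$, matching the claimed power of $q$.

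Finally I would recombine the two stages: for each of the ${d\choose k-l}_q$ choices of $U$ the inner count is the same special-case value, so the total is ${d\choose k-l}_q\,{m\choose l}_q\,q^{l(2d+l-2k-1)/2}$ with $m=d-k+l$. The $q$-binomial identity ${d\choose k-l}_q{d-k+l\choose l}_q={d\choose k}_q{k\choose l}_q$, the $q$-analogue of choosing a subset of a subset (verified at once from the $q$-factorial formula), converts this into the stated expression $q^{l(2d+l-2k-1)/2}{d\choose k}_q{k\choose l}_q$. The only points needing care are the characteristic-two-free identification of totally isotropic with totally singular subspaces, guaranteed by the standing odd-characteristic hypothesis, and checking that the graph parametrization is a genuine bijection; both are routine.
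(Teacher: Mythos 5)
Your proof is correct, but it is organized differently from the paper's. The paper counts "from the top": it chooses the image $A=V/(V\cap\mathcal I)$ as an $l$-dimensional subspace of the quotient $\mathcal H^d/\mathcal I$ (in ${d\choose l}_q$ ways), lifts it to an isotropic $\tilde A\subset\mathcal H^d$ (in $q^{dl-{l+1\choose 2}}$ ways), adds a $(k-l)$-dimensional $B\subset\mathcal I\cap\tilde A^\perp$ (in ${d-l\choose k-l}_q$ ways), and then must divide by the multiplicity $q^{l(k-l)}$ with which each $V=\tilde A+B$ arises, since the decomposition is not unique. You instead count "from the bottom": you stratify by $U=V\cap\mathcal I$ (${d\choose k-l}_q$ choices), apply Witt reduction to pass to the hyperbolic subquotient $U^\perp/U\cong\mathcal H^{d-k+l}$ with its induced metabolizer, and then parametrize the isotropic subspaces meeting the metabolizer trivially bijectively, as graphs of maps $A\colon W\to\bar{\mathcal I}$ whose associated form $B_A$ is alternating. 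Your route buys a genuine bijection with no overcounting correction, reduces the general case to $k=l$, and exhibits the power of $q$ as the dimension of an explicit affine cell (the preimage of the alternating forms under the surjection $\mathrm{Hom}(W,\bar{\mathcal I})\to\mathrm{Bil}(W)$), which harmonizes with the Bruhat-cell viewpoint used elsewhere in the paper; the paper's route is shorter to state but hides the same linear-algebra computation inside the lifting count $q^{dl-{l+1\choose 2}}=q^{l(2d-l-1)/2}$ and requires justifying the multiplicity $q^{l(k-l)}$. Both arguments terminate with the same subset-of-a-subset identity, in your case ${d\choose k-l}_q{d-k+l\choose l}_q={d\choose k}_q{k\choose l}_q$ and in the paper's case ${d\choose l}_q{d-l\choose k-l}_q={d\choose k}_q{k\choose l}_q$, and your exponent bookkeeping ($l(2m-l-1)/2$ with $m=d-k+l$) checks out.
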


\begin{proof} There are ${d\choose l}_q$ different $l$-dimensional subspaces
of the $d$-dimensional quotient space $\mathcal H^d/\mathcal I$.
Such a subspace $A\subset \mathcal H^d/\mathcal I$
can be lifted in $q^{dl-{l+1\choose 2}}$ ways into an $l$-dimensional
isotropic subspace $\tilde A$ of $\mathcal H^d$.
We add to $\tilde A$ a $k-l$ dimensional 
subspace $B$ of $\mathcal I\cap \tilde A^\perp$. This can be done
in ${d-l\choose k-l}_q$ different ways.
The subspace $V=\tilde A+B$ has the required properties.
Such subspaces arise however with multiplicity $q^{l(k-l)}$. 
Indeed, for $B,B'\subset \tilde A^\perp\cap \mathcal I$, the equality 
$\tilde A+B=\tilde A'+B'$ holds if and only if $B=B'$ and
$\tilde A'=\tilde A\pmod B$.
The total number of such spaces is thus given by 
$${d\choose l}_q{d-l\choose k-l}_qq^{dl-{l+1\choose 2}-l(k-l)}$$
which amounts to the formula given by Proposition \ref{propnbrflagsD}.
\end{proof}

\subsection{First values and some properties of the polynomials 
$M_d^D$ for type D}

The first few polynomials $M_d^D$ are as follows: 
$M^D_1=1$, coefficients of $M^D_2,M^D_3$ are given by
$$
{\begin{array}{c|ccc}
&1&q&q^2\\
\hline
1&1\\
t&&1&\\
t^2&&1&\\
t^3&&&1\\
\end{array}}
\quad
{\begin{array}{c|ccccccc}
&1&q&q^2&q^3&q^4&q^5&q^6\\
\hline
1&1\\
t&&1&1\\
t^2&&1&1&1&1&1\\
t^3&&1&1&2&1&1&1\\
t^4&&&1&2&2&1\\
t^5&&&1&1&1\\
\end{array}}
$$
and coefficients of $M^\pm_4$ are given by
$${\begin{array}{c|ccccccccccccc}
&1&q&q^2&q^3&q^4&q^5&q^6&q^7&q^8&q^9&q^{10}&q^{11}&q^{12}\\
\hline
1&1\\
t&&1&1&1\\
t^2&&1&2&1&1&1&1&2&1&1\\
t^3&&1&1&3&3&4&4&3&3&1&1\\
t^4&&1&2&3&4&5&6&6&5&3&1\\
t^5&&&1&3&6&7&8&7&6&3&1\\
t^6&&&1&3&5&6&6&5&4&3&2&1\\
t^7&&&1&1&3&3&4&4&3&3&1&1\\
t^8&&&&1&1&2&1&1&1&1&2&1\\
t^9&&&&&&&&&&1&1&1\\
t^{10}&&&&&&&&&&&&&1\\
\end{array}}$$

Evaluation of $M_d^D$ at $t=1$ yields
$$\frac{1-q^d}{1-q}\prod_{j=1}^{d-1}\frac{1-q^{2j}}{1-q}\ .$$

(Proof amounts to equality
$$\sum_{l=0}^{\lfloor d/2\rfloor}{d\choose 2l}_qq^{l(2l-1)}=\prod_{j=1}^{d-1}
(1+q^j)\quad .)$$

Evaluating the obvious identity 
\begin{align*}
M_d^\pm=\sum_{k=0}^dt^k\left(\prod_{j=k+1}^d(1-t^j)\right){d\choose k}_q
\left(\sum_{l=0}^{k}{k\choose l}_qq^{l(2d+l-2k-1)/2}\right)M_k
\end{align*}
at $t=1$ and comparing with ??? we get
$$\sum_{j=0}^d{d\choose j}_qq^{j\choose 2}=\prod_{j=0}^{d-1}(1+q^j)\ .$$

The following result generalizes the classical binomial theorem 
(corresponding to the case $q=1$):
\begin{thm}\label{lemqbinid}
We have 
$$\sum_{j=0}^d{d\choose j}_qq^{j+a\choose 2}t^j=q^{a\choose 2}\prod_{j=0}^{d-1}(1+tq^{j+a})\ .$$
for all $d\in\mathbb N$.
\end{thm}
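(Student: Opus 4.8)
The plan is to recognise this as a reparametrised form of the finite $q$-binomial theorem and to prove it by induction on $d$, which keeps the argument self-contained. Write $F_d(t)=\sum_{j=0}^d {d\choose j}_q q^{j+a\choose 2}t^j$ for the left-hand side and $G_d(t)=q^{a\choose 2}\prod_{j=0}^{d-1}(1+tq^{j+a})$ for the right-hand side. The base case $d=0$ is immediate, since both sides reduce to $q^{a\choose 2}$ (the product on the right being empty).

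For the inductive step I would first peel off the last factor of the product to get the recursion $G_d(t)=(1+tq^{d+a-1})G_{d-1}(t)$. It then suffices to show that $F_d$ satisfies the same recursion $F_d(t)=(1+tq^{d+a-1})F_{d-1}(t)$, because $F_{d-1}=G_{d-1}$ by the inductive hypothesis. To this end I expand the coefficient by the Pascal-type identity ${d\choose j}_q={d-1\choose j}_q+q^{d-j}{d-1\choose j-1}_q$, which is immediate from the product formula of Proposition \ref{propGrassm} (and is equivalent to the recursion used to define ${d\choose k}_q$). The first summand collects exactly into $F_{d-1}(t)$, while the second becomes $\sum_{j=1}^d {d-1\choose j-1}_q q^{d-j}q^{j+a\choose 2}t^j$.

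The one point that requires care is the power of $q$ in this second sum. Here the key identity is ${j+a\choose 2}-{j-1+a\choose 2}=j+a-1$, whence $q^{d-j}q^{j+a\choose 2}=q^{d+a-1}q^{j-1+a\choose 2}$; after reindexing $j\mapsto j-1$ the second sum is precisely $tq^{d+a-1}F_{d-1}(t)$. Combining the two pieces gives $F_d(t)=(1+tq^{d+a-1})F_{d-1}(t)$, closing the induction.

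I do not expect a genuine obstacle: the content is entirely the exponent bookkeeping, and the only thing that must be got right is to choose the version of Pascal's recursion whose $q$-weight $q^{d-j}$ matches the multiplicative shift forced by the last factor of the product. As a cross-check (and an alternative one-line derivation I would keep only as a remark, to stay self-contained) the statement follows from the classical expansion $\prod_{i=0}^{d-1}(1+xq^i)=\sum_{j=0}^d {d\choose j}_q q^{j\choose 2}x^j$ by setting $x=tq^a$ and using ${j+a\choose 2}={a\choose 2}+{j\choose 2}+aj$.
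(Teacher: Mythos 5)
Your proof is correct and takes essentially the same route as the paper: induction on $d$ driven by a $q$-Pascal identity, with the exponent bookkeeping ${j+a\choose 2}-{j-1+a\choose 2}=j+a-1$ carrying the whole argument. The only difference is cosmetic: you use the dual recursion ${d\choose j}_q={d-1\choose j}_q+q^{d-j}{d-1\choose j-1}_q$ and peel off the last factor $(1+tq^{d+a-1})$ with $a$ held fixed, whereas the paper uses its defining recursion ${d+1\choose j}_q={d\choose j-1}_q+q^j{d\choose j}_q$ and peels off the first factor $(1+tq^{a})$, which forces it to invoke the induction hypothesis at the shifted parameter $a+1$ (so $a$ must be treated as a free parameter throughout), a subtlety your version avoids.
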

\begin{proof}
The identity holds trivially for $d=0$. The recursive 
definition ${d+1\choose j}_q={d\choose j-1}_q+q^j{d\choose j}_q$ implies
\begin{align*}
&\quad \sum_{j=0}^{d+1}{d+1\choose j}_qq^{j+a\choose 2}t^j\\
&=\sum_{j=1}^{d+1}{d\choose j-1}_qq^{(j-1)+(a+1)\choose 2}t^{(j-1)+1}+
q^{-a}\sum_{j=0}^{d}{d\choose j}_qq^{j+a+{j+a\choose 2}}t^j\\
&=(t+q^{-a})q^{a+1\choose 2}\prod_{j=0}^{d-1}(1+q^{a+1+j})\\
&=q^{a\choose 2}\prod_{j=0}^d(1+tq^{a+j})
\end{align*}
which ends the proof by induction.
\end{proof}

\begin{proof}[Proof of Theorem \ref{formflagMajDqeqone}]
Evaluating $M_d^D$, given by Corollary \ref{corformMD}, at $q=1$ we get
\begin{align*}
&\quad \prod_{j=1}^d(1-t^j)+\sum_{k=1}^dt^k\prod_{j=k+1}^d(1-t^j){d\choose k}2^{k-1}\prod_{j=1}^k\frac{1-t^j}{1-t}\\
&=\prod_{j=1}^d(1-t^j)\left(1+\frac{1}{2}\sum_{k=1}^d\left(\frac{2t}{1-t}\right)^k{d\choose k}\right)\\
&=\prod_{j=1}^d(1-t^j)\frac{1}{2}\left(1+\left(1+\frac{2t}{1-t}\right)^d\right).
\end{align*}
Simplification yields
\begin{align*}
&\frac{(1-t)^d+(1+t)^d}{2}\prod_{j=1}^d\frac{1-t^j}{1-t}\ .
\end{align*}
which ends the proof.
\end{proof}

\section{The length function for type D}

We consider the usual generators of the Weyl group of type D given 
by $s_i=(i,i+1),\ i<d$ and $s_d(d-1)=-d,s_d(d)=1-d,s_d(i)=i$ for $i\not\in
\{\pm (d-1),\pm d\}$.

\begin{prop}\label{proplengthD} The length $l^D(\sigma)$ of an element $\sigma\in 
\mathcal S_d^D$ with respect to the generators
$s_1,\dots,s_d$ is given by the formula
\begin{align}\label{deflengthD}
l^D(\sigma)&=\sum_{0<i<j,\sigma(i)>\pm \sigma(j)}1+
\sum_{0<i,\sigma(i)<0}(d+\sigma(i))
\end{align}
with $>_\pm$ denoting the order-relation of $\mathbb Z$ defined by 
(\ref{pmorderonZ}).
\end{prop}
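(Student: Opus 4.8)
The plan is to follow the strategy used for Proposition~\ref{propsignedlength}, writing $l=l^D$ for the function defined by the formula of Proposition~\ref{proplengthD}. First I would record that $l(\mathrm{id})=0$. The result then follows from two claims: (a) for every Coxeter generator $s_i$ one has $|l(\sigma)-l(\sigma\circ s_i)|=1$, which forces the Coxeter length of $\sigma$ to be at least $l(\sigma)$; and (b) whenever $\sigma\neq\mathrm{id}$ there is a generator $s_i$ with $l(\sigma\circ s_i)=l(\sigma)-1$, which bounds the Coxeter length above by $l(\sigma)$. Combining (a) and (b) gives the desired equality.

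For the generators $s_1,\dots,s_{d-1}$ the argument is identical to the type~C case. Right multiplication by $s_i$ ($i<d$) merely exchanges the values $\sigma(i)$ and $\sigma(i+1)$ and leaves the multiset of negative values untouched, so the sign-part $\sum_{0<i,\sigma(i)<0}(d+\sigma(i))$ is unchanged; meanwhile the inversion count toggles exactly on the adjacent pair $(i,i+1)$ and on no other pair, so $|l(\sigma)-l(\sigma\circ s_i)|=1$, with a strict decrease precisely when $\sigma(i)>_\pm\sigma(i+1)$.

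The genuinely new point, and the main obstacle, is the branching generator $s_d$. Here right multiplication replaces the pair of values $(\sigma(d-1),\sigma(d))=(a,b)$ sitting in positions $d-1,d$ by $(-b,-a)$ and fixes all other positions. I would split $\Delta l=l(\sigma\circ s_d)-l(\sigma)$ into its sign-part and inversion-part contributions. The sign-part contribution depends only on the signs of $a$ and $b$ and equals $2d-a-b$, $-a-b$, or $-2d-a-b$ according as $a,b$ are both positive, of opposite sign, or both negative. For the inversion-part one tracks the single pair $(d-1,d)$ together with, for each $i\le d-2$, the pair of comparisons of $\sigma(i)$ against $\{a,b\}$ versus against $\{-a,-b\}$; the reversal symmetry $x<_\pm y\iff -y<_\pm -x$ of the order~(\ref{pmorderonZ}) lets one count these through a Rothe-type diagram on positions $d-1,d$. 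The work is to verify, case by case on the signs and relative $<_\pm$-order of $a$ and $b$, that the sign-part and inversion-part changes always combine to $\pm1$.

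Finally, for claim (b) I would argue as follows. If $\sigma$ has any adjacent $<_\pm$-descent $\sigma(i)>_\pm\sigma(i+1)$ with $i<d$, then $s_i$ strictly reduces $l$ by the second paragraph. Otherwise $\sigma(1)<_\pm\dots<_\pm\sigma(d)$; if this is not the identity then, since the negative values must occupy a suffix and their number is even for elements of $\mathcal S_d^D$, the last two values $\sigma(d-1),\sigma(d)$ are negative, and the both-negative case of the third paragraph shows $s_d$ strictly reduces $l$. Iterating strictly decreases $l$ and hence terminates at $\mathrm{id}$, producing a reduced word of length $l(\sigma)$ and completing the proof.
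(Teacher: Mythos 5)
Your proposal is correct and follows essentially the same route as the paper's proof: the $\pm1$-step property for each generator (immediate for $s_1,\dots,s_{d-1}$; for $s_d$ a case analysis on the signs of $a=\sigma(d-1)$, $b=\sigma(d)$, where your stated sign-part increments $2d-a-b$, $-a-b$, $-2d-a-b$ are exactly right and your deferred inversion count is precisely the paper's six-value-type $\nu_j$-table computation, which does combine with the sign part to give $\pm1$), followed by exhibiting a length-decreasing generator for every $\sigma\neq\mathrm{id}$. One small point in your favour: your termination argument --- negative values of an inversion-free $\sigma$ form a suffix of even cardinality, hence $\sigma(d-1),\sigma(d)<0$ --- is stated more accurately than the paper's, which asserts that an inversion-free $\sigma\neq\mathrm{id}$ ends with $\sigma(d-1)=-2,\sigma(d)=-1$ (false for, e.g., $(1,2,-4,-3)$), although the conclusion that $s_d$ then shortens $\sigma$ is unaffected.
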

Observe that the length $l^D(\sigma)$ of an element $\sigma\in 
\mathcal S_d^D$ with respect to the generators $s_1,\dots,s_d\in S_d^D$
is always bounded above by its length $l(\sigma)$ with respect to the 
natural generators $s_1,\dots,s_d$ of $S_d^\pm$. Equality holds if and only
if $\sigma\in S_d$. More precisely, the difference is exactly 
the even number of elements in $-\mathbb N\cap\{\sigma(1),\dots,\sigma(d)\}$,
as can be seen by comparing Formula (\ref{deflengthD})
with Formula (\ref{defsignedlength}).

\begin{proof}[Proof of Proposition \ref{proplengthD}]
The proof is by induction on the length and 
completely analogous to the proof of Proposition \ref{propsignedlength}.

The result holds of course if $\sigma$ is the identity.

The crucial point for induction 
is again the equality $\vert l^D(\sigma)-l^D(\sigma\circ s_d)
\vert=1$. (The behaviour with respect to the $d-1$ first generators
$\sigma_1,\dots,\sigma_{d-1}$ is as in the proof of Proposition 
\ref{propsignedlength}.) 
We denote $l^D$ simply by $l$ until the end of the proof.

We write $a=\sigma(d-1)$ and $b=\sigma(d)$.

We consider two cases, depending on the sign of $ab$.

We discuss first the case $ab>0$. If $a>b$, we replace $\sigma$ by 
$\tilde\sigma=\sigma\circ s_{d-1}$. We have then $l(\sigma)=
l(\tilde \sigma)+1$ and $l(\sigma\circ s_d)=l(\tilde \sigma\circ s_d)+1$.
We can thus assume that $a<b$. Up to replacing $\sigma$ with $\sigma\circ s_d$,
we can furthermore assume that $0<a<b$. The following representation,
similar to the representation used in the proof of Proposition 
\ref{propsignedlength}, depicts all possible subcases:
$$\begin{array}{r||cccccc|cc}
\sigma(j)&i_1&i_2&i_3&ç_4&i_5&i_6&d-1&d\\
\hline\hline
&&&&&&\bullet_6&&\\
b&&&&&&&&\bullet\\
&&&&&\bullet_5&&&\\
a&&&&&&&\bullet&\\
&&&&\bullet_4&&&&\\
\hline
&&&\bullet_3&&&&&\\
-a&&&&&&&&\circ\\
&&\bullet_2&&&&&&\\
-b&&&&&&&\circ&\\
&\bullet_1&&&&&&&\\
\end{array}$$
(the horizontal line represents $0$, the vertical line 
separates the two last indices $d-1$ and $d$ from previous ones,
the last two values of $\sigma$, respectively of $\tilde \sigma=\sigma\circ s_d$, are represented by $\bullet$, respectively $\circ$).
We denote by $i_j$ indices taking values depicted by $\bullet_j$, i.e.
$$\sigma(i_1)<-b<\sigma(i_2)<-a<\sigma(i_3)<0<\sigma(i_4)<a<
\sigma(i_5)<b<\sigma(i_6)\ .$$
The following Table represents the status with respect to inversions
(Yes for inversions, No for non-inversions)
of $\sigma$ and $\tilde \sigma=\sigma\circ s_d$
for $i_j<(d-1)$ and for $i_j<d$:
$$\begin{array}{c||c|c||c|c}
j&\sigma(i_j)>_\pm \sigma(d-1)&\sigma(i_j)>_\pm \sigma(d)&
\tilde\sigma(i_j)>_\pm \tilde\sigma(d-1)&\tilde\sigma(i_j)>_\pm \tilde\sigma(d)\\
\hline
1&\mathrm{Yes}&\mathrm{Yes}&\mathrm{No}&\mathrm{No}\\
2&\mathrm{Yes}&\mathrm{Yes}&\mathrm{Yes}&\mathrm{No}\\
3&\mathrm{Yes}&\mathrm{Yes}&\mathrm{Yes}&\mathrm{Yes}\\
4&\mathrm{No}&\mathrm{No}&\mathrm{No}&\mathrm{No}\\
5&\mathrm{Yes}&\mathrm{No}&\mathrm{No}&\mathrm{No}\\
6&\mathrm{Yes}&\mathrm{Yes}&\mathrm{No}&\mathrm{No}\\
\end{array}$$
Setting
$$\nu_j=\sharp\{i<d-1\vert i\text{ is of type }i_j\}$$
we get now
$$\begin{array}{rcl}
l(\sigma\circ s_d)-l(\sigma)&=&d-a+d-b-2\nu_1-\nu_2-\nu_5-2\nu_6\\
&=&2d-a-b-(2(\nu_1+\nu_6)+(\nu_2+\nu_5))\\
&=&2d-a-b-(2(d-b)+(b-1-a))\\
&=&1
\end{array}$$
where we have used the trivial identities $\nu_1+\nu_6=d-b$
and $\nu_2+\nu_5=b-1-a$. This settles the case $ab>0$.

We consider now the case of $ab<0$ with $a=\sigma(d-1), b=\sigma(d)$.
If $a<0<b$, we set $\tilde \sigma=\sigma\circ s_{d-1}$.
Since $l(\sigma)-l(\tilde \sigma)=1$ and 
$l(\sigma\circ s_d)-l(\tilde \sigma\circ s_d)=1$ we can replace
$\sigma$ with $\tilde \sigma$ without loss of generality.
We can thus assume $a=\sigma(d-1)>0>-b=\sigma(d)$.
Up to replacing $\sigma$ with $\sigma\circ s_d$, we can moreover 
assume that $a<b$. The situation is now represented by
$$\begin{array}{r||cccccc|cc}
\sigma(j)&i_1&i_2&i_3&ç_4&i_5&i_6&d-1&d\\
\hline\hline
&&&&&&\bullet_6&&\\
b&&&&&&&\circ&\\
&&&&&\bullet_5&&&\\
a&&&&&&&\bullet&\\
&&&&\bullet_4&&&&\\
\hline
&&&\bullet_3&&&&&\\
-a&&&&&&&&\circ\\
&&\bullet_2&&&&&&\\
-b&&&&&&&&\bullet\\
&\bullet_1&&&&&&&\\
\end{array}$$
The table describing inversions involving $d-1$ or $d$ is 
$$\begin{array}{c||c|c||c|c}
j&\sigma(i_j)>_\pm \sigma(d-1)&\sigma(i_j)>_\pm \sigma(d)&
\tilde\sigma(i_j)>_\pm \tilde\sigma(d-1)&\tilde\sigma(i_j)>_\pm \tilde\sigma(d)\\
\hline
1&\mathrm{Yes}&\mathrm{No}&\mathrm{Yes}&\mathrm{No}\\
2&\mathrm{Yes}&\mathrm{Yes}&\mathrm{Yes}&\mathrm{No}\\
3&\mathrm{Yes}&\mathrm{Yes}&\mathrm{Yes}&\mathrm{Yes}\\
4&\mathrm{No}&\mathrm{No}&\mathrm{No}&\mathrm{No}\\
5&\mathrm{Yes}&\mathrm{No}&\mathrm{No}&\mathrm{No}\\
6&\mathrm{Yes}&\mathrm{No}&\mathrm{Yes}&\mathrm{No}\\
\end{array}$$
(with $\tilde\sigma=\sigma\circ s_d$, as before).

Defining the numbers $\nu_i$ as above, we get
$$\begin{array}{rcl}
l(\sigma\circ s_d)-l(\sigma)&=&(d-a)-(d-b)-(\nu_2+\nu_5)\\
&=&b-a-(b-1-a)\\
&=&1
\end{array}$$
which settles the case $ab<0$.

Since $l(\sigma)$ and $l(\sigma\circ s_i)$ differ always exactly
by $1$, the length of an a element $\sigma$ in $\mathcal S_d^D$
is at least $l(\sigma)$.
Let now $\sigma$ be a non-trivial element of $S_d^D$ (we have
of course $l(id)=0$ for the identity
permuation $id$ of $S_d^D$). If $\sigma$ has an inversion
then it has an inversion involving two consecutive indices
$i,j=i+1$ and replacing $\sigma$ with $\sigma\circ s_i$
decreases its length $l$ by one. If $\sigma\not=id$ is without inversions
then it ends with $\sigma(d-1)=-2,\sigma(d)=-1$ and applying $s_d$
decreases its length by $1$.
\end{proof}

The proof of Proposition \ref{proplengthD} is again algorithmic. We illustrate
it by considering the permutation 
$(\sigma(1),\dots,\sigma(4))=(-2,4,-3,1)$ of $S_4^D$. It has length $8$
(the pairs $(1,2),(1,3),(1,4),(2,4),(3,4)$ define inversions and 
we get a two sign-contributions $4+\sigma(1)=4-2=2$ and $4+\sigma(3)=4-3=1$).
We denote a permutation $\tau$ of $S_4^D$ always by $(\tau(1),\dots,\tau(4))$.
We have 
$$\begin{array}{r|c|c}
\sigma&(-2,4,-3,1)&8\\
\sigma\circ s_3&(-2,4,1,-3)&7\\
\sigma\circ s_3\circ s_2&(-2,1,4,-3)&6\\
\sigma\circ s_3\circ s_2\circ s_1&(1,-2,4,-3)&5\\
\sigma\circ s_3\circ s_2\circ s_1\circ s_2&(1,4,-2,-3)&4\\
\sigma\circ s_3\circ s_2\circ s_1\circ s_2\circ s_4&(1,4,3,2)&3\\
\sigma\circ s_3\circ s_2\circ s_1\circ s_2\circ s_4\circ s_3&(1,4,2,3)&2\\
\sigma\circ s_3\circ s_2\circ s_1\circ s_2\circ s_4\circ s_3\circ s_2&(1,2,4,3)&1\\
\sigma\circ s_3\circ s_2\circ s_1\circ s_2\circ s_4\circ s_3\circ s_2\circ s_3&(1,2,3,4)&0\\
\end{array}$$
yielding the minimal expression 
$$\sigma=s_3\circ s_2\circ s_3\circ s_4\circ s_2\circ s_1\circ s_2\circ s_3$$
of $\sigma$ in terms of the generators 
$$s_1=(2,1,3,4),s_2=(1,3,2,4),s_3=(1,2,4,3),s_4=(1,2,-4,-3)\ .$$

\section{Halfbases for type D}

Half-bases for type D are similar to half-bases in the symplectic case.
The only difference is the fact that the coefficient of index $-\lambda(i)$
in $f_i$ is always determined by isotropy. (It is free in the symplectic 
case if $\lambda(i)<0$. This difference in behaviour 
translates to a difference 
of $1$ in the summands of the second summation occuring in 
Formulae (\ref{defsignedlength}) and (\ref{deflengthD}).) 
We illustrate this by the Rothe diagram 
of $(\sigma(1),\dots,\sigma(6))=(-5,3,-1,-6,4,-2)$
which is given by 
$$\begin{array}{c||c|c|c|c|c|c||c|c|c|c|c|c|}
i\backslash \sigma(i)&1&2&3&4&5&6&-6&-5&-4&-3&-2&-1\\
\hline\hline
1&\otimes_3&\otimes_6&\times&\times&\perp&\times&\otimes_1&\bullet&&&&\\
\hline
2&\otimes_3&\otimes_6&\bullet&&\perp&&&&&\perp&&\\
\hline
3&\perp&\otimes_3&&\times&\perp&\times&\otimes_3&&\otimes_3&\perp&\times&\bullet\\
\hline
4&\perp&\otimes_6&&\times&\perp&\perp&\bullet&&&\perp&&\\
\hline
5&\perp&\otimes_6&&\bullet&\perp&\perp&&&\perp&\perp&&\\
\hline
6&\perp&\perp&&&\perp&\perp&&&\perp&\perp&\bullet&\\
\hline
\end{array}$$

\subsection{Proof of Theorem \ref{mainthmtypeD}}\label{sectproofD}

Up to obvious modifications, the proof is as for the type $C$.

\section{Incorporating statistics for Eulerian polynomials of type A and BC}\label{sectEulerpol}


According to \cite{Br}, a 
\emph{descent} of an element $w$ in a Weyl group $W$ is a canonical 
generator $s_i$
such that $w\circ s_i$ is shorter than $w$. Using our conventions,
it is easy to check that the number of descents of $\sigma$ in 
$\mathcal S_d$ or in $\mathcal S_d^\pm$ 
is given by
\begin{align}\label{descnumber}
\beta(\sigma)&=\delta(\sigma(d)<0)+\sum_{1\leq i<d,\sigma(i)>_\pm\ \sigma(i+1)}1
\end{align}
where $\delta(\hbox{true})=1$ and $\delta(\hbox{false})=0$.

Formula (\ref{descnumber}) does not coincide with the number of 
descents in Weyl groups of type D (the difference is however always 
bounded by $1$).

We consider now the extended Weyl-Mahonian statistics defined by
$$\tilde M_d^*=\sum_{\sigma\in S_d^*}q^{l^*(\sigma)}s^{\beta(\sigma)}t^{Wmaj(\sigma)}$$
of type A,BC and D.

Given a weighted flag $F=(V_1\subset \dots\subset V_k;w_1,\dots,w_k)$, 
we set $\alpha(F)=\sum_{i=1}^k w_i$. We have obviously
$\alpha(F)\leq w(F)=\sum_{i=1}^k w_i\dim(V_i)$.

Straightforward modifications of the proofs of Theorems \ref{mainthm},
\ref{thmsympl} and \ref{mainthmtypeD} show easily the 
following result:

\begin{thm} We have
$$\sum_{F\in \mathcal{WF}(*,q)_d}s^{\alpha(F)}t^{w(F)}=
\tilde M_d^*\prod_{j=1}^d\frac{1}{1-st^j}$$
with $\mathcal{WF}(*,q)_d$ denoting the obvious set of weighted
flags of type $*$ with $*$ standing for A,BC or D.
\end{thm}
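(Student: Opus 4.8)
The plan is to refine the proofs of Theorems \ref{mainthm}, \ref{thmsympl} and \ref{mainthmtypeD} by threading the extra variable $s$ through the weight-partition bookkeeping, while keeping the entire ``standard flag plus partition'' architecture intact. Recall that each weighted flag $F$ is determined by its standard flag $st(F)$ (equivalently, by a canonical (half-)basis with length-permutation $\sigma$) together with a partition $\alpha_1\geq\dots\geq\alpha_d\geq 0$, and that $w(F)=w_{st}(st(F))+\sum_{i=1}^d\alpha_i$; see Proposition \ref{propweightedflagstandardflag} and its type-BC/D analogues. The standard weight $w_{st}(st(F))$ equals $\mathop{Wmaj}(\sigma)$ (Propositions \ref{propstandweight} and \ref{propsignedstandardweight}), and there are $q^{l^*(\sigma)}$ (half-)bases with a given length-permutation $\sigma$ (Propositions \ref{propcanbaseslambda} and \ref{propdimhalfbase}). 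None of this is touched by $s$; what remains is to express $\alpha(F)$ in these coordinates.

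First I would record the key new observation. The first basis vector $f_1$ lies in \emph{every} subspace $V_1\subset\dots\subset V_k$ of the standard flag, so its total multiplicity is $\mu(f_1)=\sum_{i=1}^k w_i=\alpha(F)$ (equivalently, $\alpha(F)$ is the number of parts of the weight-partition, hence the largest part $\mu(f_1)$ of its conjugate). For the standard flag itself one has $\mu_{\min}(f_1)=k$, the number of its subspaces, and adding the partition gives $\mu(f_1)=k+\alpha_1$, so that
\begin{align*}
\alpha(F)=k+\alpha_1.
\end{align*}
Next I would identify $k$ with the descent statistic: the standard flag acquires one bounding subspace at each position where the length-permutation descends in the order $<_\pm$, together with one final maximal (Lagrangian/isotropic) subspace exactly when $\sigma(d)<0$. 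This is precisely the count $\beta(\sigma)$ of Formula (\ref{descnumber}), so $k=\beta(\sigma)$ and therefore $\alpha(F)=\beta(\sigma)+\alpha_1$.

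With these two identities the computation factors exactly as before. Fixing a standard flag $F_{st}$ with length-permutation $\sigma$ and summing over the associated partitions,
\begin{align*}
\sum_{F\in st^{-1}(F_{st})}s^{\alpha(F)}t^{w(F)}
=s^{\beta(\sigma)}t^{\mathop{Wmaj}(\sigma)}
\sum_{\alpha_1\geq\dots\geq\alpha_d\geq 0}s^{\alpha_1}t^{\alpha_1+\dots+\alpha_d}.
\end{align*}
The remaining partition sum is evaluated by the change of variables $\beta_j=\alpha_j-\alpha_{j+1}\geq 0$ (with $\alpha_{d+1}=0$), under which $\alpha_1=\sum_{j=1}^d\beta_j$ and $\sum_{i=1}^d\alpha_i=\sum_{j=1}^d j\beta_j$, so that the sum factors as $\prod_{j=1}^d(1-st^j)^{-1}$. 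This is exactly the $s$-deformation of the partition factor $\prod_{j=1}^d(1-t^j)^{-1}$ of the original theorems, collapsing to it at $s=1$. Summing over all standard flags, weighted by the $q^{l^*(\sigma)}$ canonical (half-)bases with each length-permutation $\sigma$, then yields $\tilde M_d^*\prod_{j=1}^d(1-st^j)^{-1}$, as claimed.

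The step I expect to be delicate is the identification $k=\beta(\sigma)$, and in particular making it uniform across the three families. In type A it is immediate (subspaces correspond to ordinary descents, and $\sigma(d)>0$ forces the sign term to vanish), and in type C it can be read off the standard-flag definition in terms of the lengths $\lambda(b_{\sigma(i)})$, as in the worked example of Section \ref{sectRothedsympl}. The genuinely touchy case is type D, where Formula (\ref{descnumber}) is \emph{not} the Coxeter descent number (the two differ by at most one). The point to verify carefully is that the half-basis construction for type D produces its bounding subspaces according to the same $<_\pm$-descent pattern and the same sign condition $\sigma(d)<0$ as in type C, so that the number $k$ of subspaces is governed by $\beta(\sigma)$ of (\ref{descnumber}) rather than by the true descent count; one must also check that the even-parity restriction defining $\mathcal{WF}^e(\mathcal H^d_q)$ does not disturb this count.
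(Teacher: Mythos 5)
Your proof is correct and takes essentially the same approach as the paper, which dismisses this theorem with the remark that it follows by ``straightforward modifications'' of the proofs of Theorems \ref{mainthm}, \ref{thmsympl} and \ref{mainthmtypeD}: you carry out precisely those modifications, threading $s$ through the standard-flag-plus-partition decomposition via the identity $\alpha(F)=\mu(f_1)=\beta(\sigma)+\alpha_1$ and replacing the partition factor by $\prod_{j=1}^d(1-st^j)^{-1}$. Your closing caution about type D --- that the number $k$ of subspaces is governed by $\beta(\sigma)$ of Formula (\ref{descnumber}) rather than by the Coxeter descent number --- is exactly the point the paper itself acknowledges when it notes that $\tilde M_d^D$ incorporates a ``slightly different statistic''.
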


Formulae for $\tilde M_d^*$ are given by the following result:

\begin{thm}\label{thmEuler}
For type A we get
\begin{align}\label{formrecMdtilde}
\tilde M_d&=\left(\prod_{j=1}^{d-1}1-st^j\right)+s\sum_{k=1}^{d-1}t^k{d\choose k}_q\left(\prod_{j=k+1}^{d-1}1-st^j\right)
\tilde M_k\ .
\end{align}
For type BC we get
\begin{align}\label{formrecMpmdtilde}
\tilde M^\pm_d&=\left(\prod_{j=1}^d1-st^j\right)+s\sum_{k=1}^dt^k\left(\prod_{j=0}^{k-1}\frac{1-q^{2d-2j}}{1-q^{k-j}}\right)\left(\prod_{j=k+1}^d1-st^j\right)
\tilde M_k\ .
\end{align}
For type D we get
\begin{align}\label{formrecMDdtilde}
\tilde M^D_d&=\left(\prod_{j=1}^d1-st^j\right)+\\
&\quad +s\sum_{k=1}^dt^k{d\choose k}_q\left(\sum_{l=0}^{\lfloor k/2\rfloor}
{k\choose 2l}_qq^{l(2d+2l-2k-1)}\right)\left(\prod_{j=k+1}^d1-st^j\right)
\tilde M_k\ .
\end{align}
\end{thm}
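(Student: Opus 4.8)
The plan is to repeat the geometric decompositions behind Corollaries \ref{correc}, \ref{corformulaMdpm} and \ref{corformMD}, the only new feature being that the variable $s$ must be carried through so as to record the statistic $\alpha$. By the (unlabelled) theorem stated just before Theorem \ref{thmEuler} one has
$$G_d^*:=\sum_{F\in\mathcal{WF}(*,q)_d}s^{\alpha(F)}t^{w(F)}=\tilde M_d^*\prod_{j=1}^d\frac{1}{1-st^j},$$
so it suffices to derive the three recursions at the level of the flag generating series $G_d^*$. The descent statistic $\beta$ of Formula (\ref{descnumber}) is never touched directly, since that theorem has already translated it into $\alpha$ on the flag side.

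First I would sort the weighted flags by the dimension $i$ of their top subspace $V_\omega$ and peel off one unit of weight from $V_\omega$, exactly as in the proof of Corollary \ref{correc}. For each admissible $V_\omega$ of dimension $i$ this gives a bijection between the weighted flags of $\mathcal{WF}(*,q)_d$ with top subspace $V_\omega$ and the arbitrary ordinary weighted flags of $V_\omega\cong\mathbb F_q^i$; the inverse either increments the weight of $V_\omega$ in $F'$ (when $V_\omega$ is already the top of $F'$) or adjoins $V_\omega$ as a fresh top of weight $1$ (otherwise, including when $F'$ is empty). The two bookkeeping facts are that peeling lowers the total multiplicity $\alpha$ by exactly $1$ and the weight $w$ by exactly $i$, independently of everything else; hence each non-empty flag contributes a factor $s\,t^i$ while the flags inside $V_\omega$ contribute the \emph{type A} series $\tilde M_i\prod_{j=1}^i(1-st^j)^{-1}$, because $V_\omega$ is isotropic in types BC and D and so carries no form. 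Adding the empty flag as the base case $1$ and summing over $V_\omega$ yields, uniformly in the three types,
$$\tilde M_d^*\prod_{j=1}^d\frac{1}{1-st^j}=1+s\sum_{i=1}^d t^i\,N^*_{d,i}\,\tilde M_i\prod_{j=1}^i\frac{1}{1-st^j},$$
where $N^*_{d,i}$ is the number of admissible $i$-dimensional top subspaces.

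The count $N^*_{d,i}$ is the sole datum distinguishing the three families and is available off the shelf: ${d\choose i}_q$ from Proposition \ref{propGrassm} for type A, the isotropic count $\prod_{j=0}^{i-1}(1-q^{2d-2j})/(1-q^{i-j})$ from Lemma \ref{lemistropBC}(ii) for type BC, and the even-isotropic count ${d\choose i}_q\sum_{l=0}^{\lfloor i/2\rfloor}{i\choose 2l}_qq^{l(2d+2l-2i-1)}$ extracted from Proposition \ref{propnbrflagsD} for type D. To finish I would clear denominators by multiplying through by $\prod_{j=1}^d(1-st^j)$, turning each inner factor $\prod_{j=1}^i(1-st^j)^{-1}$ into $\prod_{j=i+1}^d(1-st^j)$. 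For types BC and D the right-hand side only ever involves the type A series $\tilde M_i$, so this directly produces Formulae (\ref{formrecMpmdtilde}) and (\ref{formrecMDdtilde}), with products running up to $d$. For type A the $i=d$ term is self-referential (there $V_\omega=\mathbb F_q^d$ is again a full type A ambient, so $\tilde M_d$ reappears on the right), so I would isolate it and divide by $1-st^d$, which shortens every product to run up to $d-1$ and yields Formula (\ref{formrecMdtilde}).

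The calculations, namely the denominator clearing and the subspace counts, are routine or already proved. The step I expect to need the most care is the $s$-bookkeeping under peeling: one must confirm that the factor $s$ attaches uniformly to every non-empty flag, so that the empty flag alone escapes it and supplies the leading product, and, in the type D case, that the parity condition constrains only the top subspace $V_\omega$ and not the inner flags, so that once $V_\omega$ is chosen even the flags inside it are genuinely unconstrained ordinary flags of $\mathbb F_q^i$. Granting this verification for type A, the BC and D cases are formally identical after substituting the respective $N^*_{d,i}$.
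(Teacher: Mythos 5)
Your proposal is correct and takes essentially the approach the paper intends: the paper explicitly reduces Theorem \ref{thmEuler} to ``straightforward generalizations'' of the proofs of Corollaries \ref{correc}, \ref{corformulaMdpm} and \ref{corformMD}, and your peeling of one weight unit from the top subspace $V_\omega$ (factor $s\,t^i$ per non-empty flag, empty flag supplying the leading product), combined with the subspace counts from Proposition \ref{propGrassm}, Lemma \ref{lemistropBC} and Proposition \ref{propnbrflagsD}, is exactly that generalization carried out in full. Your two flagged verifications are also the right ones, and are handled correctly: the type D parity condition indeed constrains only the last subspace, so inner flags are unconstrained type A flags of $V_\omega$, and isolating the self-referential $k=d$ term in type A correctly explains why the products there stop at $d-1$ rather than $d$.
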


Proofs for Theorem \ref{thmEuler} are straightforward generalizations
of proofs for Corollaries \ref{correc}, \ref{corformulaMdpm} and
\ref{corformMD}.

Observe that $\tilde M_d^*$ incorporates the so-called Euler statistic counting
descents for type A and type BC. For type D, the polynomials $\tilde M_d^D$
incorporate slightly different statistic.

{\bf Acknowledgements.} I thank Michel Brion, Pierre de la Harpe, Emmanuel Peyre
and ??????????????????????????????????????
for corrections, interesting discussions, remarks or comments.

\noindent Roland BACHER, 
 Univ. Grenoble Alpes, Institut Fourier, 
 F-38000 Grenoble, France.

\noindent e-mail: Roland.Bacher@univ-grenoble-alpes.fr


\end{document}